\documentclass[12pt,a4paper]{article}
\linespread{1.1}
%--- Packages ---
%\usepackage{colordvi}
\usepackage{amsmath,amsfonts,amsthm,amssymb}
\usepackage{graphicx}
\usepackage{verbatim}
\usepackage{color}
\usepackage{amscd}
\usepackage{verbatim}
\bibliographystyle{siam}
\usepackage{mathrsfs}

\textwidth = 15.5cm \textheight = 21cm \hoffset=-.5cm
\voffset=-.5cm

\newtheorem{theorem}{\bf Theorem}[section]
\newtheorem{lemma}[theorem]{\bf Lemma}
\newtheorem{corollary}[theorem]{\bf Corollary}
\newtheorem{definition}[theorem]{\bf Definition}
\newtheorem{proposition}[theorem]{\bf Proposition}
\theoremstyle{remark}
\newtheorem{remark}[theorem]{\bf Remark}
\newtheorem{example}[theorem]{\bf Example}

%%%%%%%%%%
\newcommand{\mysection}[1]{\section{#1}\setcounter{equation}{0}}
%%%%%%%%%%%%%%%%%

\def \OT  {{\mathbb{R}^{N}\times \R_T}} %{{\Omega_T}}

\def \d {{\delta}}

\def \K {\mathscr{K}}
\def \L {\mathscr{L}}
\def \M {\mathscr{M}}
\def \H {\mathscr{H}}
\def \A {\mathscr{A}}

\def \CC {{\mathscr{C}}}

\def \R {{\mathbb {R}}}
\def \Q {{\mathbb {Q}}}
\def \N {{\mathbb {N}}}

\def \Z {{\mathbb {Z}}}

\def \x {{\xi}}
\def \e {{\varepsilon}}

\def \t {{\tau}}

\def \y {{\eta}}

\def \g {{\gamma}}
\def \O {{\Omega}} %{\mathcal{O}}

\def \ex {\mathrm{ex}\,}
\def \exr {\mathrm{exr}\,}
\def \tilde {\widetilde}

\def\p{\partial}

%%%%%%%%%%%%%%%%%%%%%%%%%%%%%%%%
%%%%%%%%alessia's newcommands%%%%%%%%%%%%%%%%
%%%%%%%%%%%%%%%%%%%%%%%%%%%%%%%%%%

\def\Delta{\varDelta}

\def \RR {\mathscr{R}}
\def\ci {{\mathbb {G_{\text C}}}} %{\mathfrak{C}}

%%%%%%%%%%%%%%%%%%%%%%%%%%%%%%%%
%%%%%%%%yehuda's newcommands%%%%%%%%%%%%%%%%
%%%%%%%%%%%%%%%%%%%%%%%%%%%%%%%%%%

\usepackage[english]{babel}
\begin{document}
\title{On Liouville-type theorems and the uniqueness of the positive Cauchy problem for a class of hypoelliptic
operators}
\author{{\sc{Alessia E. Kogoj\thanks{Dipartimento di Ingegneria dell'Informazione, Ingegneria Elettrica e Matematica Applicata, Universit\`a degli Studi di Salerno, IT-84084 Fisciano (SA),  (Italy).
E-mail: akogoj@unisa.it},
Yehuda Pinchover\thanks{Department of Mathematics, Technion - Israel Institute of Technology, Haifa 32000, (Israel).
\quad E-mail: pincho@tx.technion.ac.il},
 Sergio Polidoro\thanks{Dipartimento di Scienze Fisiche, Informatiche e Matematiche, Universit\`{a} di Modena e Reggio
Emilia, via Campi 213/b, 41125 Modena (Italy).
E-mail: sergio.polidoro@unimore.it}}}
}
\date{ }
\maketitle

\begin{abstract}
This note contains a representation formula for positive solutions of linear degenerate second-order equations of
the form
$$
  \partial_t u (x,t) = \sum_{j=1}^m X_j^2 u(x,t) + X_0 u(x,t) \qquad (x,t) \in \mathbb{R}^N \times\, ]- \infty ,T[,
$$
proved by a functional analytic approach based on Choquet theory. As a consequence, we obtain Liouville-type theorems
and uniqueness results for the positive Cauchy problem.
\\[2mm]
\noindent  2000  \! {\em Mathematics  Subject  Classification.}
Primary  \! 35K70; Secondary  35B09, 35B53, 35K15, 35K65.\\[1mm]
\noindent {\em Keywords.} Harnack inequality, hypoelliptic operators, positive Cauchy problem, Liouville-type theorems,
ultraparabolic operators.
\end{abstract}

%35B53: Liouville theorems, Phragmén-Lindelöf theorems
%35H10: Hypoelliptic equations
%35K70: Ultraparabolic equations, pseudoparabolic equations, etc.
%35B45: A priori estimates
%35J25: Boundary value problems for second-order elliptic equations
%35B50: Maximum principles
%31B05: Harmonic, subharmonic, superharmonic functions
%35C15: Integral representations of solutions
%31B10: Integral representations, integral operators, integral equations methods
%35J70: Degenerate elliptic equations
%35H20: subelliptic equations
%32A19: Normal families of functions, mappings
%35R03: Partial differential equations on Heisenberg groups, Lie groups, Carnot groups, etc.
%35B08  	Entire solutions
%35B09  	Positive solutions
%35K15  	Initial value problems for second-order parabolic equations
%35K65  	Degenerate parabolic equations

%%%%%%%%%%%%%%%%%%
\mysection{Introduction}
%%%%%%%%%%%%%%%%%%%%%%%%%%%
In this article we consider second-order partial differential operators of the form
\begin{equation}\label{e1}
     \L u : = \p_t u -\sum_{j=1}^m X_j^2 u  - X_0 u \qquad \mbox{ in } \R^{N+1}.
\end{equation}
Points $z \in \R^{N+1}$ are denoted by $z=(x,t)$, where $x\in \R^{N}, t\in\R$. For $j=0,\ldots, m$, the $X_{j}$ are
vector fields
which are given by first-order linear partial differential operators in $\R^{N}$ with smooth coefficients
\begin{equation*}
     X_{j}(x):=\sum_{k=1}^{N} b_{jk}(x)\p_{x_{k}} \qquad j=0,\ldots, m.
\end{equation*}
We denote by $Y$ the {\it drift}
\begin{equation}\label{eY}
 Y := X_{0}-\p_{t}.
\end{equation}
We recall that the class of operators of the form  \eqref{e1} has been studied by many authors. In
particular, we refer to the monographs \cite{ LibroBLU, Bramantibook, Calin}, and to the references therein.

The aim of the article is to prove a representation formula for nonnegative solutions of $\L u= 0$ in the set
\begin{equation}\label{eOT}
  \OT :=  \R^N \times\, ]\!-\infty,T[,
\end{equation}
where $0 < T \le + \infty$. In the sequel we use the following notation
\begin{align}
  & \H := \Big\{u \in C^\infty (\OT) \mid \L u = 0 \quad \mbox{in } \OT \Big\}, \label{e-def-H} \\
  & \H_+ := \Big\{u \in \H \mid u \ge 0 \Big\}. \label{e-def-H+}
\end{align}
We use a functional analytic approach based on Choquet theory that allows us to represent all functions belonging to the
convex cone $\H_+$ in terms of its extremal rays. Moreover, we prove a \emph{separation principle} for the extremal
rays. The separation principle, in the nondegenerate case, says that (under certain conditions) nonnegative extremal
solutions of the heat equations have the form $u(x,t) = e^{\beta t} u_\beta(x)$, with $\beta \in \R$. In our degenerate
setting the separation principle has a different form that depends on $\L$. However, we prove in
Theorem~\ref{H*-lambda-repr-par} that, under some additional assumptions, any nonnegative extremal solution of
$\partial_t u = \sum_{j=1}^m X_j^2 u$ in $\OT$, does not depend on the `degenerate' variables. From the
representation theorem it plainly follows that under the additional assumptions also any function in $\H_+$ does not
depend on the `degenerate' variables. A similar result is proved in Theorem~\ref{H*-lambda-repr} for degenerate
stationary operators $\sum_{j=1}^m X_j^2 u = 0$, and in Corollary \ref{c_Kolmo} for Kolmogorov equations. We refer to
this kind of results as \emph{Liouville-type theorems} because of the very specific form of any point in $\H_+$.

Let us informally explain this remarkable phenomenon. We assume in Theorems~\ref{H*-lambda-repr-par} and
\ref{H*-lambda-repr} that $\L$ is invariant with respect to the \emph{left translations} of a nilpotent stratified Lie
group. On the other hand, the proof of our separation principle relies on Harnack inequalities that are invariant
with respect to the \emph{right translations} of the group. Both these two properties are satisfied in the particular
case of the last layer of the nilpotent Lie group. In this case, we can prove our separation principle, that yields our
claim. Let us also note that this fact is not completely unexpected. Indeed, Danielli, Garofalo and Petrosyan consider
in \cite{DGP2007} the  subelliptic obstacle problem in Carnot groups of step two, and prove that the non-horizontal
derivatives of any solution vanish continuously on the free boundary.

We also give a simple proof of a known uniqueness result for the positive Cauchy problem. We note that this integral
representation theory approach was previously used to prove the uniqueness of the positive Cauchy problem and
Liouville-type theorems for locally {\em uniformly} parabolic and elliptic operators \cite[and references
therein]{KoranyiTaylor85, LinPinchover94, Murata93, Murata95, Pinchover88, Pinchover1996}.

We next focus on Mumford and degenerate Kolmogorov operators. Their drift term $X_0$ is nontrivial, and
plays a crucial role in the regularity properties of the solutions. In Section \ref{sec_mumford} we prove a
uniqueness result for the positive Cauchy problem for Mumford operators. In Section \ref{sec_Kolmogorov} we consider a
family of degenerate Kolmogorov operators, and prove in Corollary \ref{c_Kolmo} that any nonnegative solution
of this partial differential equation in $\OT$ does not depend on the `degenerate' variables, and hence, the uniqueness of the positive Cauchy problem holds true for such operators.

\medskip

We list below our assumptions on $\L$ that will be used to accomplish this project. We assume that $\L$ satisfies the
celebrated H\"ormander's condition:
\begin{description}
  \item[{\rm (H0)}]
$ \qquad \text{rank Lie}\{X_{1},\dots,X_{m},Y\}(z) = N+1 \quad \text{for every} \, z \in \R^{N+1}.$
\end{description}
Under this condition H\"ormander proved in \cite{Hormander} that $\L$ is hypoelliptic, that is, any
distributional solution $u$ of the equation $\L u= f$ is a smooth classical solution, whenever $f$ is smooth.
In particular, $\H$ contains {\em all} distributional solutions of the equation $\L u=0$ in $\OT$.

Our second hypothesis is as follows:
\begin{description}
  \item[{\rm (H1)}] there exists a Lie group $\mathbb{G} = \left(\R^{N+1},\circ \right)$ such that the vector fields
$X_{1}, \dots, X_{m}, Y$ are invariant with respect to the left translation of $\mathbb{G}$. That is, for every $z,
\zeta \in \R^{N+1}$ we have
      \begin{equation*}
\begin{split}
  \left( X_j u \right) (\zeta \circ z) & = X_j \left( u(\zeta \circ z)\right) \qquad j = 1, \dots, m, \text{ and } \\
  \left( Y u \right) (\zeta \circ  z) & =  Y\left( u(\zeta \circ z)\right).
\end{split}
\end{equation*}
\end{description}
In particular, it follows from (H1) that
\begin{equation}\label{e-Lie}
  \big(\L u\big) (z) = f(z)\quad \Leftrightarrow \quad \L \big( u(\zeta \circ z) \big) = f(\zeta \circ z) \qquad
  \forall \zeta \in \R^{N+1}.
\end{equation}

\medskip

We will use the following notation in our further assumptions. As usual, we identify the first order linear partial
differential operator $X_j$ with the vector-valued function
\begin{equation*}
     X_{j}(x)= (b_{j1}(x), \ldots, b_{jN}(x)) \qquad j = 1, \dots, m.
\end{equation*}
For any $z_0 \in \R^{N+1}$ and any piecewise constant function $\omega: \left[0,T_0\right] \to \R^m$, let $\gamma$ be a
solution of the following initial value problem
\begin{equation}\label{e-gdot}
   \g'(s)=\sum_{j=1}^{m} \omega_j(s) X_j(\g(s))+ Y(\g(s)), \qquad \gamma(0)= z_0.
\end{equation}
We say that the solution $\g$ to \eqref{e-gdot} is an {\it $\L$-admissible path}.

Let $\O \subseteq\R^{N+1}$ be an open set and let $z_0 \in \O$. The {\it attainable set}
\begin{equation}\label{e-Anew}
   \A_{z_0} (\Omega):= \overline {A_{z_0}(\Omega)}
\end{equation}
is defined as the closure in $\O$ of
\begin{equation*}
 A_{z_0} (\Omega) := \big\{z\in\O \mid \;\exists \;\L\text{-admissible path } \g: [0,\tau] \to \O  \text{ s.t. } \g(0)=
z_0, \gamma(\tau)=z  \big\}.
\end{equation*}
When $\Omega=\OT$ (see \eqref{eOT}), we use the simplified notation $\A_{z_0} : = \A_{z_0} (\OT)$.

Our last requirement is concerned with a $\L$-admissible path with a constant $\omega\in \R^m$. As we will see in the sequel,
it yields a \emph{restricted uniform Harnack inequality} suitably modeled on the Lie group structure of $\mathbb{G}$
(cf. \cite{Murata93}). For $X=(X_1,\ldots, X_m)$, and $\omega = (\omega_{1},\dots,\omega_{m})\in \R^m$, we denote
\begin{equation*}
\begin{split}
  &\omega \cdot X := \omega_1 X_1 + \dots + \omega_m X_m, \\
  &  \exp\left( s \left(\omega \cdot X + Y \right) \right) z_0:=\gamma(s), \qquad \mbox{where is defined in
\eqref{e-gdot}}  \\
  & \exp\left( s \left(\omega \cdot X + Y \right)  \right) := \exp\left( s \left(\omega \cdot X + Y \right)
\right)(0,0).
\end{split}
\end{equation*}
Note that, by the invariance of the vector fields with respect to $\mathbb{G}$, we have
\begin{equation} \label{e-gcirc}
    \exp\left( s \left(\omega \cdot X + Y \right)  \right) z_0 =
    z_0 \circ\exp\left( s \left(\omega \cdot X + Y \right) \right).
\end{equation}
 Moreover, from \eqref{eY} we see that the \emph{time} component of $\exp\left( s \left(\omega \cdot X + Y \right)
\right) (x_0,t_0)$ is always $t_0-s$. With these notations, our last hypothesis reads as follows
\begin{description}
  \item[{\rm (H2)}] There exists a bounded open set $\Omega$ containing the origin, a vector $\omega
\in \R^m$ and a positive $s_0$ such that
\begin{equation}\label{eq-harnack}
    \exp\left( s \left(\omega \cdot X + Y \right) \right) \in \mathrm{Int} \left(\A_{(0,0)} (\Omega) \right)
\quad \text{for
any} \quad s \in\, ]0, s_0].
\end{equation}
\end{description}

\begin{remark} \label{rem-assumptions} Some comments on our assumptions (H0), (H1) and (H2) are worth noting.

1. The heat operator $\L = \partial_t - \varDelta$ is of the form \eqref{e1}. Moreover, it is invariant with
respect to the Euclidean translations $(x,t) \circ (\xi, \tau) = (x + \xi, t + \tau)$ and (H2) is satisfied by any
$\omega \in \R^N$. In this particular case, if we choose $\omega = 0$, and we recall that $X_0=0$, we see that
$\exp\left( s \left(\omega \cdot X + Y \right) \right) = (0,- s)$. Note that a restricted uniform Harnack inequality
$u(x,t-\e) \le C_\e u(x,t)$ follows from the classical parabolic Harnack inequality first proved by Hadamard
\cite{Hadamard} and Pini \cite{Pini}.

2. More generally, hypothesis (H2) is satisfied in the case of an operator of the form $\partial_t + \L_0 u$ in $\R^{N}
\times\,]\!-\infty, T[$, where $\L_0$ is a time-independent locally uniformly elliptic operator with bounded
coefficients, and also in the case of a manifold $M$ with a cocompact group action $G$ and an operator of the form
$\partial_t + \L_0 u$ on $M\times\, ]\!-\infty, T[$, where $\L_0$ is a (time-independent) $G$-invariant elliptic operator on $M$ (see
\cite{KoranyiTaylor85, LinPinchover94, Murata93, Pinchover88, Pinchover1996}).

3. We further note that there are operators $\L$ of the form \eqref{e1} that satisfy (H0) and (H1), for which  (H2) is
not satisfied for all $\omega$. We refer to Mumford operator \eqref{ex-Mum} discussed in Section
\ref{sec_mumford}, and to Example~\ref{ex4}.
\end{remark}

\medskip

Our assumptions (H0), (H1) and (H2) provide us with some compactness properties that are needed for proving that all points in the convex closed cone $\H_+$ can be represented in terms of its extremal rays. These compactness properties hinge on the following
local Harnack inequality which holds true under our assumptions (see the main result of \cite{KogojPolidoro15}).
\begin{description}
\item[{\rm (H*)}] Let $\O \subseteq\R^{N+1}$ be a bounded open set and let $z_0 \in \O$. For any compact
set $K \subset
\mathrm{Int}\left(\A_{z_0}(\O) \right)$ there exists a positive constant $C_K$, only depending on $\O, K, z_0$ and
$\L$, such that
\begin{equation}\label{lHarnack}
     \sup_K u \le C_K \, u(z_0),
\end{equation}
for any nonnegative solution $u$ of $\L u = 0$ in $\Omega$.
\end{description}
Note that, from (H*) and from the hypoellipticity of $\L$ we have that $\H$ is a Fr\'{e}chet space with respect to the
topology of uniform convergence on compact sets. Moreover, in this topology, $\H_+$ is clearly a closed convex cone
in $\H$. We denote by $\exr \H_+$ the set of all extreme rays of $\H_+$.

We next discuss the validity of (H*). Recall that Krener's Theorem states that for any open set $\O \subseteq\R^{N+1}$ and  $z_0 \in \O$, the
interior of $\A_{z_0}(\Omega)$ is not empty whenever (H0) is satisfied (see \cite{Krener} or \cite[Theorem 8.1, p.
107]{AgrachevSachkov}). We note here, that for this reason, it is not clear to us whether there exists an operator $\L$
satisfying (H0) and (H1), but not satisfying (H2).

Properties (H1), (H2) and (H*) yield the following \emph{restricted uniform Harnack inequality} (cf. \cite{Murata93}).

\medskip

\begin{proposition}[restricted uniform Harnack inequality]\label{p-restr-harnack}
Let $\L$ be an operator of the form \eqref{e1}, satisfying {\rm (H0)}, {\rm (H1)}, and {\rm (H2)}. Let $\omega$,
$\Omega$ and $s_0$ be as in (H2). For any $s>0$ there exists a positive constant $C_{s} >0$ depending only on
$\omega$, $s$ and $\L$, such that for any nonnegative solution $u$ of $\L u = 0$ in $\OT$ we have
\begin{equation}\label{e-harnack}
  u\left( \exp\left( s \left(\omega \cdot X + Y \right) \right) z \right) \le C_{s} u(z)
  \qquad \forall z \in \OT.
\end{equation}
Moreover, if for $j= 1, \dots, k$, $\omega_j$ are as in (H2), and $s_j$ are any positive constants,  then
there exists a positive constant $C_\mathbf{s} >0$ (where $\mathbf{s} = (s_1, \dots, s_k)$) depending only on
$\omega_1, \dots, \omega_k, \mathbf{s}$ and $\L$, such that for any nonnegative solution $u$ of $\L u = 0$ in $\OT$ we
have
\begin{equation}\label{e-harnack-2}
  u\left( \exp\left( s_k \left(\omega_k \cdot X + Y \right) \right) \dots \exp\left( s_1 \left(\omega_1 \cdot X +
Y \right) \right) z \right) \le C_\mathbf{s} u(z) \qquad \forall z \in \OT.
\end{equation}
\end{proposition}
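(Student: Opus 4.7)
The plan is to combine the local Harnack inequality (H*) at the origin with the left-invariance (H1), thereby transferring it to any point $z \in \OT$, then to bootstrap to all $s > 0$ via the one-parameter subgroup generated by $\omega \cdot X + Y$; the composition inequality \eqref{e-harnack-2} will then follow by iterating \eqref{e-harnack} over the $k$ directions.

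\textbf{Step 1 (base case for small $s$).} Abbreviate $p(s) := \exp\!\big(s(\omega \cdot X + Y)\big)$. For $s \in\, ]0, s_0]$, hypothesis (H2) places $p(s)$ in $\mathrm{Int}(\A_{(0,0)}(\Omega))$, so choosing $K := \{p(s)\}$ in (H*) with $z_0 = (0,0)$ yields a constant $C_s > 0$, depending only on $\Omega, s, \omega, \L$, such that every nonnegative solution $v$ of $\L v = 0$ on $\Omega$ satisfies $v(p(s)) \le C_s\, v(0,0)$.

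\textbf{Step 2 (transfer via (H1)).} For arbitrary $z \in \OT$, set $v(w) := u(z \circ w)$. By (H1), $\L v = 0$ on the open set $\{w : z \circ w \in \OT\}$. To apply Step 1 one needs $\Omega \subset \{w : z \circ w \in \OT\}$; this is arranged by replacing $\Omega$ with $\Omega \cap \{t < \eta\}$ for a sufficiently small $\eta > 0$, since $\L$-admissible paths starting at $(0,0)$ have $\g_t'(s) = -1$ and hence $\A_{(0,0)}(\Omega) \subset \{t \le 0\}$, so that the truncation preserves (H2). Combined with \eqref{e-gcirc}, Step 1 then gives
\begin{equation*}
  u\!\big(p(s)\, z\big) \;=\; u(z \circ p(s)) \;=\; v(p(s)) \;\le\; C_s\, v(0,0) \;=\; C_s\, u(z).
\end{equation*}

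\textbf{Step 3 (extension to all $s > 0$).} Because $\omega \cdot X + Y$ is autonomous, the flow $s \mapsto p(s)$ is a one-parameter subgroup of $\mathbb{G}$: the semigroup property of the ODE \eqref{e-gdot} together with (H1) and \eqref{e-gcirc} yield $p(s_1 + s_2) = p(s_1) \circ p(s_2)$, equivalently
\begin{equation*}
  \exp\!\big((s_1 + s_2)(\omega \cdot X + Y)\big)\, z \;=\; p(s_2)\,\big(p(s_1)\, z\big).
\end{equation*}
For any $s > 0$, choose $n \in \N$ with $s/n \le s_0$ and iterate Step 2 along the chain $z, p(s/n)\, z, p(2s/n)\, z, \dots, p(s)\, z$, each of which belongs to $\OT$ because the $t$-coordinate decreases at every step. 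Multiplying the $n$ Harnack inequalities produces \eqref{e-harnack} with $C_s$ depending only on $s, \omega, \L$.

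\textbf{Step 4 (composition).} For \eqref{e-harnack-2}, set $z_0 := z$ and $z_j := \exp\!\big(s_j(\omega_j \cdot X + Y)\big)\, z_{j-1}$ for $j = 1, \dots, k$. Each $z_j \in \OT$, and applying \eqref{e-harnack} at each stage gives $u(z_j) \le C_{s_j}^{(\omega_j)}\, u(z_{j-1})$; the product over $j$ produces $C_\mathbf{s} = \prod_{j=1}^k C_{s_j}^{(\omega_j)}$.

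The main technical point is the boundary handling in Step 2: ensuring $z \circ \Omega \subset \OT$ uniformly for every $z \in \OT$, despite $t(z)$ possibly approaching $T$. The essential observation is the confinement of admissible paths from $(0,0)$ to $\{t \le 0\}$, which permits arbitrary upward truncation of $\Omega$ without invalidating (H2); the iteration in Step 3 (in which each small step pushes $t$ further from $T$) then washes out any residual $z$-dependence of the constant, yielding a truly uniform Harnack bound.
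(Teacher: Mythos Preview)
Your proof is correct and follows essentially the same line as the paper's: apply (H*) at the origin, transfer to an arbitrary $z$ via left-invariance and \eqref{e-gcirc}, then iterate along the one-parameter flow $s \mapsto p(s)$ to cover all $s>0$, and finally chain the $k$ directions for \eqref{e-harnack-2}. The paper's argument is terser and does not discuss the boundary issue you flag in Step~2 (it simply writes that $u^{z}(y):=u(z\circ y)$ is a nonnegative solution of $\L u=0$ without specifying the domain), so your truncation remark is a legitimate, if minor, addition.
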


\begin{proof}
Let $u$ be a nonnegative solution $u$ of $\L u = 0$ in $\OT$.  For any $z\in \mathbb{G}$ the function $u^z(y):=u(z\circ
y)$ is a nonnegative solution of the equation $\L u = 0$.  Therefore, for every $s \in ]0, s_0]$, by the local Harnack
inequality (H*) and \eqref{e-gcirc}, we have
\begin{equation} \label{e-gcirc1}
\begin{split}
    u\left( \exp\left( s \left(\omega \cdot X + Y \right)  \right) z\right) = u\left(z \circ\exp\left( s \left(\omega
\cdot X + Y \right)  \right)\right)= \\
    u^z\left(\exp\left( s \left(\omega \cdot X + Y \right)  \right)\right)\leq C_su^z(0)=C_{s} u(z).
\end{split}
\end{equation}
This proves \eqref{e-harnack} if $s \in ]0, s_0]$. If $s > s_0$ we choose $\tilde s \in ]0, s_0]$ and $k \in
\N$ such that $s = k \tilde s$. By \eqref{e-gcirc} and \eqref{e-gcirc1} we find.
\begin{equation*} %\label{e-gcirc2}
\begin{split}
    u\left( \exp\left( k \tilde s \left(\omega \cdot X + Y \right) \right) z\right) & \leq
    C_{\tilde s} u\left( \exp\left( (k-1) \tilde s \left(\omega \cdot X + Y \right)  \right) z\right) \leq \\
    \dots & \leq  C_{\tilde s}^{k-1} u\left( \exp\left( \tilde s \left(\omega \cdot X + Y \right)  \right) z\right)
    C_{\tilde s}^k u\left(z\right).
\end{split}
\end{equation*}
This concludes the proof of \eqref{e-harnack}, with $C_s = C_{\tilde s}^k$.

The proof of \eqref{e-harnack-2} follows by the same argument.
\end{proof}

\begin{remark} \label{rem-s-s_0} In the proof of Proposition \ref{p-restr-harnack} we have constructed a
\emph{Harnack chain} based on the \emph{local} Harnack inequality (H*). For this reason, \eqref{e-harnack} and
\eqref{e-harnack-2} don't require the boundedness assumption of the open set $\Omega$ and of the interval $]0,s_0]$ in
Condition (H2). Hence, when we apply Proposition \ref{p-restr-harnack} in the sequel, we don't refer to $\Omega$ and
$s_0$.
\end{remark}

The following  theorem is a version of the \emph{separation principle} (see \cite{Murata93} and
\cite[Definition~2.2]{Pinchover1996}). We note that the restricted uniform Harnack inequality
(Proposition~\ref{p-restr-harnack}) is used in the proof of our separation principle to construct \emph{Harnack chains}
along the path $\g(s) = \exp\left( s \left(\omega \cdot X + Y \right) \right)(x_0,t_0)$.

\medskip

\begin{theorem}[Separation principle] \label{th-main}
Let $\L$ be an operator of the form \eqref{e1}, satisfying {\rm (H0)}, {\rm (H1)}, and {\rm (H2)}. Let $\omega$
be as in {\rm (H2)}, and suppose that for every $u\in \H_+$, and every positive $s$
\begin{equation}\label{eq_right_inv}
    (x,t) \mapsto u\left(\exp(s (\omega \cdot X +Y))(x,t)\right) \qquad \mbox{is a solution of }\; \L u = 0 \mbox{ in
}\OT.
\end{equation}
Then, for every $u \in \exr \H_+$, $u\neq 0$, there exists $\beta \in \R$ such that
\begin{equation}\label{eq_funct_eq}
    u\left(\exp(s (\omega \cdot X +Y))(x,t)\right) = \mathrm{e}^{- \beta s} u(x,t)
\end{equation}
for every $(x,t) \in \OT$ and for every $s > 0$.
In particular, for every $u \in \exr \H_+$ and $z_0 = (x_0,t_0)$ in $\OT$, if $u(z_0) >0$, then $u>0$ in a neighborhood
of the integral curve
\begin{equation}\label{eq_int_curv}
\gamma:=\big\{ \exp\left( s \left(\omega \cdot X + Y \right) \right) z_0 \mid s \in \;] t_0 - T, + \infty[ \big\}.
\end{equation}
\end{theorem}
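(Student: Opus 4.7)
The plan is to analyze the one-parameter family of right translations $T_s u(z) := u(\exp(s(\omega \cdot X + Y))\, z)$ acting on $\H_+$, combine its compression bound from the restricted uniform Harnack inequality with the extremality of $u$, and then solve a Cauchy functional equation.

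\textbf{Step 1} (extremality forces proportionality). By assumption \eqref{eq_right_inv}, $T_s u \in \H_+$ for every $s > 0$. Proposition~\ref{p-restr-harnack} applied to $u \in \H_+$ yields the pointwise bound $T_s u \le C_s u$ on $\OT$, so $v_s := C_s u - T_s u$ also belongs to $\H_+$. The decomposition $C_s u = T_s u + v_s$ writes the element $C_s u \in \H_+$ as a sum of two elements of $\H_+$; since $u$ generates an extreme ray, both summands must be proportional to $u$, and therefore there exists $\lambda(s) \in [0, C_s]$ with
\begin{equation*}
    T_s u = \lambda(s)\, u \qquad \text{in } \OT.
\end{equation*}

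\textbf{Step 2} (semigroup structure). Setting $g_s := \exp(s(\omega \cdot X + Y))$ and invoking \eqref{e-gcirc}, we have $T_s u(z) = u(z \circ g_s)$. Because $\omega \cdot X + Y$ is left-invariant on $\GG$, its integral curve through the origin is a one-parameter subgroup, so $g_{s_1} \circ g_{s_2} = g_{s_1 + s_2}$ and consequently $T_{s_1} \circ T_{s_2} = T_{s_1 + s_2}$. Combined with Step 1 this yields the Cauchy functional equation
\begin{equation*}
    \lambda(s_1 + s_2) = \lambda(s_1)\, \lambda(s_2), \qquad s_1, s_2 \ge 0, \qquad \lambda(0) = 1.
\end{equation*}

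\textbf{Step 3} (solving for $\lambda$ and positivity along $\gamma$). Fix $z_* \in \OT$ with $u(z_*) > 0$, which is possible since $u \not\equiv 0$ and $u \ge 0$. Then
\begin{equation*}
    \lambda(s) = \frac{u\bigl(\exp(s(\omega \cdot X + Y))\, z_*\bigr)}{u(z_*)}
\end{equation*}
depends smoothly on $s$, hence is continuous. The identity $\lambda(s) = \lambda(s/n)^n$ combined with $\lambda(0) = 1$ rules out any zero of $\lambda$, so $\lambda > 0$ on $[0, \infty)$. Passing to $\mu(s) := \log \lambda(s)$ reduces the problem to the standard continuous Cauchy equation $\mu(s_1 + s_2) = \mu(s_1) + \mu(s_2)$, which forces $\mu(s) = -\beta s$ for some $\beta \in \R$; this is \eqref{eq_funct_eq}. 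For the final claim, if $u(z_0) > 0$ then \eqref{eq_funct_eq} directly gives $u > 0$ along the portion $s > 0$ of $\gamma$; for $s \in\, ]t_0 - T, 0[$, applying \eqref{eq_funct_eq} at parameter $-s > 0$ and base point $z_0 \circ g_s$ yields $u(z_0 \circ g_s) = \mathrm{e}^{-\beta s} u(z_0) > 0$. Continuity of $u$ then supplies an open neighborhood of $\gamma$ on which $u$ remains strictly positive.

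The main technical point I expect to require careful justification is the identification of $\{g_s\}$ as a one-parameter subgroup of $\GG$ and the resulting transfer of the semigroup law to the $T_s$ acting on $\H$; once this is in place, the remainder is the classical Murata-style extremality scheme combined with the continuous Cauchy equation.
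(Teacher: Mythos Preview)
Your argument is correct and matches the paper's proof: combine the restricted uniform Harnack inequality with extremality to get $T_s u = \lambda(s)\,u$, then exploit the semigroup property of $s\mapsto g_s$ to obtain a Cauchy functional equation solved via continuity. The only cosmetic difference is in how $\lambda(s)>0$ is established---the paper argues directly that $\lambda(s)=0$ would force $u\equiv 0$ on $\R^N\times\,]-\infty,T-s[$ (iterating over $s\to 0$ when $T<\infty$), while you use continuity of $\lambda$ at $s=0^+$ together with $\lambda(s)=\lambda(s/n)^n$.
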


We also have the following result, useful in the study of stratified Lie groups and the Mumford operator. It is
weaker than Theorem \ref{th-main} in that the {\em right-invariance} of solutions is not assumed to hold for every
positive $s$.

\begin{proposition} \label{prop-separation}
Let $\L$ be an operator of the form \eqref{e1}, satisfying {\rm (H0)}, {\rm (H1)}, and {\rm (H2)}. Let $\omega_j$ is as
in (H2) for $j= 1, \dots, k$, and suppose that there exists $\mathbf{s} = (s_1, \dots, s_k) \in (\R^+)^k$ such that
\begin{equation}\label{eq_right_inv-2}
    (x,t) \mapsto u\left(\exp\left( s_k \left(\omega_k \cdot X + Y \right) \right) \dots \exp\left( s_1 \left(\omega_1
\cdot X + Y \right) \right)(x,t)\right)
\end{equation}
is a solution of $\L u = 0$ in $\OT$ whenever $u\in \H_+$. Then, for every $u \in \exr \H_+$, $u\neq 0$,
there exists a positive constant $C = C(\mathbf{s}, \omega_1, \dots, \omega_k)$ such that
\begin{equation}\label{eq_funct_eq-2}
    u\left(\exp\left( s_k \left(\omega_k \cdot X + Y \right) \right) \dots \exp\left( s_1 \left(\omega_1 \cdot
X + Y \right) \right)(x,t)\right) = C u(x,t)
\end{equation}
for every $(x,t) \in \OT$.
\end{proposition}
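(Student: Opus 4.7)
The plan is to follow the strategy of Theorem~\ref{th-main}, adapted to a single composition of exponential maps in place of a one-parameter family. First, I would define the linear operator $T \colon \H_+ \to \H_+$ by
\[
(Tu)(x,t) := u\bigl(\exp(s_k(\omega_k\cdot X + Y)) \cdots \exp(s_1(\omega_1\cdot X + Y))(x,t)\bigr).
\]
The hypothesis \eqref{eq_right_inv-2} ensures $Tu \in \H$, and nonnegativity of $u$ gives $Tu \ge 0$, so $T$ does map $\H_+$ into itself.

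Next, I would invoke the restricted uniform Harnack inequality \eqref{e-harnack-2} to obtain the pointwise estimate
\[
Tu(z) \le C_\mathbf{s}\, u(z) \qquad \forall\, z \in \OT,
\]
where $C_\mathbf{s}$ depends only on $\mathbf{s}, \omega_1, \dots, \omega_k$, and $\L$. Consequently, $v := C_\mathbf{s} u - Tu$ is an $\L$-solution that is pointwise nonnegative, hence $v \in \H_+$. The resulting decomposition $C_\mathbf{s} u = Tu + v$ in $\H_+$, combined with the extremality of the ray generated by $u$, forces each summand to be a nonnegative scalar multiple of $u$. Writing $Tu = C u$ for some $C \in [0, C_\mathbf{s}]$ yields the identity \eqref{eq_funct_eq-2}, with a constant that depends only on the claimed parameters $\mathbf{s}, \omega_1, \dots, \omega_k$.

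The main obstacle is proving the strict positivity $C > 0$. In Theorem~\ref{th-main} this is automatic because the one-parameter family $\{T_s\}_{s>0}$ satisfies $T_{s+s'} = T_s\, T_{s'}$, producing a functional equation whose measurable solutions are of the form $f(s) = e^{-\beta s} > 0$; here we lack that structure. To rule out $C = 0$, I would argue by contradiction: if $Tu \equiv 0$, then by \eqref{e-gcirc} the function $u$ vanishes on the right-translate $\OT \circ \eta$, where $\eta$ is the group element corresponding to the composition, whose image equals the half-slab $\R^N \times \,]\!-\!\infty, T - S[$ with $S = s_1 + \cdots + s_k$. One then combines the hypoellipticity of $\L$ with a Harnack-chain argument (propagating positivity from any point $z_0$ with $u(z_0) > 0$) to contradict the vanishing of $u$ on this half-slab, thereby forcing $u \equiv 0$ on $\OT$ and contradicting $u \neq 0$. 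This final propagation step is the delicate technical point of the proof.
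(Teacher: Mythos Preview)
Your main line---defining $Tu$, using the restricted uniform Harnack inequality \eqref{e-harnack-2} to get $Tu \le C_{\mathbf s}\,u$, and then invoking extremality of the ray through $u$ to conclude $Tu = C u$ for some $C \in [0,C_{\mathbf s}]$---is exactly what the paper does (it says the proof is ``analogous to the proof of \eqref{e-cone}'').

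The gap is in your proposed argument for $C>0$. The Harnack inequality (H*) and its chained form \eqref{e-harnack-2} are one-sided: they give $u(z_1)\le C\,u(z_0)$ whenever $z_1$ lies in the attainable set of $z_0$, i.e.\ at \emph{earlier} times. This lets you propagate \emph{zeros} downward along admissible paths (if $u(z_0)=0$ then $u=0$ on $\mathrm{Int}\,\A_{z_0}$), but it does \emph{not} let you propagate \emph{positivity} from a point $z_0$ with $t_0\in[T-S,T[$ down into the half-slab $\{t<T-S\}$: the inequality points the wrong way, and there is no lower bound to carry. So the Harnack-chain step you sketch cannot produce the contradiction.

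The paper's actual positivity argument, taken from the proof of \eqref{e-cone} in Theorem~\ref{th-main}, is different and simpler: if $C=0$ then $u\equiv 0$ on $\R^N\times\,]\!-\infty,T-S[$, which already contradicts $u\neq 0$ when $T=+\infty$. When $T<\infty$, the proof of \eqref{e-cone} in Theorem~\ref{th-main} runs the same argument along a \emph{vanishing sequence} $s_j\to 0$, exploiting the one-parameter structure available there. Proposition~\ref{prop-separation} with a \emph{fixed} $\mathbf s$ does not literally carry that structure; the paper simply refers back to the argument of \eqref{e-cone}, and in every application (Theorem~\ref{H*-lambda-repr-par}, Proposition~\ref{prop-sep-mum}) the hypothesis is in fact verified for a one-parameter family of tuples $\mathbf s$, so the vanishing-sequence argument is available and $C>0$ follows. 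Your write-up should either restrict to $T=+\infty$, or make this one-parameter mechanism explicit instead of the Harnack-chain idea.
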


We prove Theorem~\ref{th-main} and Proposition \ref{prop-separation} in the next subsection devoted to our functional
setting.

\begin{remark}\label{rem-leftrightinvariance} Assumption \eqref{eq_right_inv} of Theorem~\ref{th-main} appears to be
quite
strong. Indeed, since $\L$ is \emph{left-invariant} with respect to the operation ``$\circ$'', it follows that $(x,t)
\mapsto u((x_0, t_0) \circ (x,t))$ is a solution of $\L u = 0$ for every fixed $(x_0,t_0) \in \R^{N+1}$ and $u\in \H$.
On the other hand, \eqref{e-gcirc}, says that $u\left(\exp(s (\omega \cdot X +Y))(x,t)\right) = u\left((x,t) \circ
\exp(s (\omega \cdot X +Y))\right)$, and therefore, we also assume, in fact, a \emph{right-invariance} condition, with
respect to the point $\exp(s (\omega \cdot X +Y))$.

However, both conditions are satisfied by the class of linear degenerate operators such that $X_0 = 0$. In this case we
have
\begin{equation} \label{eq-heatkernel}
  \L u(x,t) = \partial_t u(x,t) - \sum_{j=1}^m X_j^2 u(x,t),
\end{equation}
and (H2) is satisfied for every $\omega \in \R^m$. In particular, for $\omega = 0$ and $s>0$,
\begin{equation*} %\label{eq-heatkernel}
  \exp(s (\omega \cdot X +Y))(x,t) = (x,t-s)
\end{equation*}
and $\L u(x,t-s) = 0$ in $\OT$ if $\L u(x,t) = 0$ in $\OT$.

In Section~\ref{sec_Parabolic} we discuss some classes of operators of the form \eqref{eq-heatkernel} satisfying {\rm
(H0)}, {\rm (H1)}, and {\rm (H2)}. In this case, Theorem~\ref{th-main} says that for any nonnegative extremal solution
$u$ of $\L u = 0$ in $\OT$ there exists $\beta\in \R$ such that for any $s>0$
\begin{equation}\label{eq_funct_eq-nodrift}
    u(x,t-s) = \mathrm{e}^{- \beta s} u(x,t)   \qquad \forall (x,t) \in \OT.
\end{equation}
Note that a separation principle also holds when the drift term has the form $X_0 = \sum_{j=1}^N b_j \partial_{x_j}$,
where $b = (b_1, \dots , b_N)$ is any constant vector. Indeed, if $u$ is a positive solution of $$\partial_t u =
\sum_{j=1}^m X_j^2 u + \sum_{j=1}^N b_j \partial_{x_j}u$$ then $v(x,t) := u(x - t b, t)$ is a solution of the analogous
equation $$\partial_t v = \sum_{j=1}^m X_j^2 v.$$  Then we can apply Theorem~\ref{th-main} to $v$ with $\omega = b$,
and
finally we obtain
\begin{equation*}
   u(x + s b,t-s) = \mathrm{e}^{- \beta s} u(x,t)   \qquad \forall (x,t) \in \OT.
\end{equation*}

In Section \ref{sec_mumford}, we present a remarkable example of an operator satisfying assumption
\eqref{eq_right_inv} of Theorem~\ref{th-main}, namely, the well-known Mumford operator:
\begin{equation*} %\label{ex-Mum}
    \mathscr{M} u := \p_{t}  u - \cos(x) \p_y u - \sin(x) \p_w u - \p_x^2 u  \qquad (x,y,w,t) \in \R^4,
\end{equation*}
an operator that is discussed in detail in Section~\ref{sec_mumford}. Clearly its drift $X_0 = \cos(x) \p_y + \sin(x)
\p_w$ is nontrivial. It is also worth noting that $\mathscr{M}$ satisfies the assumptions of Proposition
\ref{prop-separation}, with $s = 2 \pi$, but it doesn't satisfy the hypotheses of Theorem \ref{th-main}.
We also note that Section \ref{s-remark} contains some remarks on the validity of \eqref{eq_right_inv} for operators with nontrivial
drift.
\end{remark}

\medskip

% Theorem~\ref{th-main} implies the uniqueness of the positive Cauchy problem (see Theorem~\ref{th-main-2}).
% The uniqueness of the positive Cauchy problem for a class of left translation invariant hypoelliptic operators was
% proved by Chiara Cinti in \cite{Cinti09} under the additional hypothesis that the operator is {\em homogeneous} with
% respect to a group of dilations on the underlying Lie group. The method used in \cite{Cinti09} relies on some accurate
% upper and lower bounds of the fundamental solution of $\L$. We note that the lower bounds for the fundamental solution
% are usually obtained by constructing suitable \emph{Harnack chains}, as the ones used in the proof of
%Theorem~\ref{th-main}.
% On the other hand, in order to apply the method used in \cite{Cinti09},  the upper and lower bounds need
% to agree asymptotically. Hence, the Harnack chains need to be chosen in some optimal way. An advantage of our method
%is
% that it does not require such an optimization step. Actually, a priori bounds of the fundamental solution, and even
%its
% existence are not needed. We also note that the bibliography of \cite{Cinti09} contains an extensive discussion of
%known results on the
% uniqueness of the Cauchy problem.

\medskip

The outline of the paper is as follows. In Section~\ref{sec_funct} we introduce representation formulas that play a
crucial role in our study, and we give the proof of Theorem~\ref{th-main}.
% Section~\ref{sec_example} contains some examples of operators satisfying our assumptions (H0)-(H2).
In sections~\ref{sec_Parabolic}--\ref{sec_Cauchy} we study operators $\L$ such that the drift term $X_0$ vanishes
identically. In particular, in Section~\ref{sec_Elliptic} we study stationary solutions, Section~\ref{sec_Parabolic}
deals with solutions of the evolution equation, while Section~\ref{ssec_Liouville} discusses parabolic Liouville-type
theorems, and Section~\ref{sec_Cauchy} contains a uniqueness result for the positive Cauchy problem. In
Section \ref{sec_mumford} we prove a new uniqueness result for Mumford's operator. In Section~\ref{sec_Kolmogorov}
we compute the Martin boundary of Kolmogorov-Fokker-Planck operators in $\OT$. Finally, Section~\ref{sec_further} is
devoted to some concluding remarks concerning the results of the present paper and to a discussion of some open
problems.

%%%%%%%%%%%%%%%%%%%%%%%%%%%%

\mysection{Functional setting}\label{sec_funct}

In the present section we introduce some notations, and recall some known facts about convex cones in vector spaces.
The following definition plays a crucial role in our study. It leads to some compactness results that enable us to apply
Choquet's theory. We first introduce the following notation. If $z \in \R^{N+1}$ and $\Omega$ is a bounded open subset
of  $\R^{N+1}$, we set
\begin{equation} \label{Omega-z}
 \Omega_z = z \circ \Omega =\Big\{ z \circ \zeta \mid \zeta \in \Omega \Big\}.
\end{equation}

\begin{definition}\label{def_ref_path}{\em Let $\L$ be an operator satisfying (H2).
A sequence $\RR := \left( z_k \right)_{k \in \N}\subset \OT$ is said to be a {\it
reference set} for $\L$ in $\OT$, if}
\begin{equation*}
  \bigcup_{k=1}^{\infty}  \mathrm{Int}  \left(\A_{z_k} \left(\Omega_{z_k} \right)\right) = \OT,
\end{equation*}
{\em where $\Omega$ is the bounded open set satisfying (H2).}
\end{definition}

We next prove that, in our setting, a reference set always exists.

\begin{proposition} \label{prop-covering} If $\L$ satisfies assumptions {\rm (H0), (H1)} and {\rm (H2)}, then a
\emph{reference set} $\RR$ exists.
\end{proposition}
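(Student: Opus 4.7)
The plan is to use the left-invariance (H1) to transport a single nonempty open set anchored at the origin to every point of $\OT$, and then extract a countable subcover by second countability. The open set to be transported is
\begin{equation*}
  V \;:=\; \mathrm{Int}\bigl(\A_{(0,0)}(\Omega)\bigr),
\end{equation*}
which is nonempty because, by (H2), the curve $s \mapsto \exp\bigl(s(\omega \cdot X + Y)\bigr)$ lies in $V$ for every $s \in\,]0,s_0]$.

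First I would establish the covariance
\begin{equation*}
   \A_z(\Omega_z) \;=\; z \circ \A_{(0,0)}(\Omega) \qquad \forall\, z \in \R^{N+1},
\end{equation*}
so that $\mathrm{Int}\bigl(\A_z(\Omega_z)\bigr) = z \circ V$. Indeed, if $\g:[0,\tau]\to \Omega$ is an $\L$-admissible path from $(0,0)$ driven by some piecewise constant control $\omega$, then by (H1) (exactly the step behind \eqref{e-gcirc}) the curve $s \mapsto z \circ \g(s)$ is $\L$-admissible with the same control, starts at $z$, and takes values in $z \circ \Omega = \Omega_z$; the reverse inclusion follows by translating with $z^{-1}$. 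Since left multiplication in $\mathbb{G}$ is a diffeomorphism of $\R^{N+1}$, it commutes with taking interiors.

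Next I would verify the pointwise covering $\OT = \bigcup_{z \in \OT}(z \circ V)$. Given $(x,t) \in \OT$, pick $s_\ast \in\, ]0, \min\{s_0, T - t\}[\,$ and set $v_\ast := \exp\bigl(s_\ast(\omega \cdot X + Y)\bigr) \in V$. Put $z := (x,t) \circ v_\ast^{-1}$, so that $(x,t) = z \circ v_\ast \in z \circ V$ by construction. Invoking \eqref{e-gcirc} together with the observation (noted in the paragraph following it) that the time component of $\exp\bigl(s(\omega \cdot X + Y)\bigr)(x_0,t_0)$ is always $t_0 - s$, the time coordinate of $z$ must equal $t + s_\ast$, which is strictly less than $T$; hence $z \in \OT$. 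Since each $z \circ V$ is open and $\OT$ is second countable, a countable subcover $\{z_k \circ V\}_{k \in \N}$ of the cover $\{z \circ V : z \in \OT\}$ exists, and $\RR := (z_k)_{k\in\N}$ is the desired reference set.

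The only delicate bookkeeping is checking that $z \in \OT$, which is precisely what forces the constraint $s_\ast < T-t$; everything else follows mechanically from (H1), (H2), and the second countability of $\R^{N+1}$. Hypothesis (H0) enters only indirectly, via Krener's theorem, which guarantees that the interior in (H2) is not a priori empty.
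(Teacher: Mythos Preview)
Your proof is correct and follows essentially the same route as the paper's. The pointwise step---given $(x,t)\in\OT$, run the flow $\exp(s(\omega\cdot X+Y))$ backwards for a small time $s_\ast<T-t$ to locate a base point $z\in\OT$ with $(x,t)\in\mathrm{Int}\bigl(\A_z(\Omega_z)\bigr)$---is exactly what the paper does (writing $z=\exp(-s(\omega\cdot X+Y))(\xi,\tau)$ instead of $z=(x,t)\circ v_\ast^{-1}$, which by \eqref{e-gcirc} is the same computation). The only cosmetic difference is the extraction of a countable family: the paper uses a compact exhaustion of $\OT$ and finite subcovers, whereas you invoke second countability (Lindel\"of) directly; in $\R^{N+1}$ these are interchangeable. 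Your explicit statement and verification of the covariance $\A_z(\Omega_z)=z\circ\A_{(0,0)}(\Omega)$ makes transparent a step the paper leaves implicit.
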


\begin{proof}
Let $\left( K_j \right)_{ \in \N}$ be a sequence of compact sets  such that
\begin{equation*}
  \bigcup_{j=1}^{\infty}  K_j = \OT.
\end{equation*}

We claim that for every $j \in \N$ there exist $k_j \in \N$ and $z_{j_1}, \dots, z_{j_{k_j}} \in \OT$ such that
\begin{equation} \label{eq-covering}
  K_j \subset \bigcup_{i=1}^{{k_j}}  \mathrm{Int}  \left(\A_{z_{j_i}} \left(\Omega_{z_{j_i}} \right)\right).
\end{equation}
In order to prove \eqref{eq-covering} we consider $\omega \in \R^m, s_0>0$ and $\Omega$ satisfying (H2).
For every $(\xi, \tau) \in K_j$ we choose $s \in]0, s_0]$ such that $s + \tau <T$, and we set
\begin{equation*}
     (x,t) = \exp\left( - s \left(\omega \cdot X + Y \right) \right) (\xi, \tau).
\end{equation*}
Since $t = s+\tau<T$, it follows that  $(x,t) \in \OT$. Moreover
\begin{equation*}
     \exp\left( s \left(\omega \cdot X + Y \right) \right) (x,t) = (\xi, \tau),
\end{equation*}
then, by (H1) and (H2) we have that $(\xi, \tau) \in \mathrm{Int}\A_{(x,t)}\left( \Omega_{(x,t)} \right)$.
Hence, \eqref{eq-covering}
follows from the compactness of $K_j$. Therefore, a reference set for $\L$ in $\OT$ is given by
\begin{equation*}
  \RR := \bigcup_{j=1}^{\infty}  \Big\{ z_{j_1}, \dots, z_{j_{k_j}} \Big\}.
\end{equation*}
\end{proof}

We equip $\H$ with the compact open topology, that is, the topology of uniform convergence on compact sets.

Let $\RR := \left( z_k \right)_{k \in \N}$ be a reference set for $\L$ in $\OT$, and let $a = \left( a_k \right)_{k \in
\N}$ be a strictly positive sequence. We set
\begin{align}
  % & \H := \Big\{u \in C^\infty (\OT) \mid \L u = 0 \quad \mbox{in } \OT \Big\}, \label{e-def-H} \\
  % &\H_+ := \Big\{u \in \H \mid u \ge 0 \Big\}, \label{e-def-H+} \\
  &\H_{a} := \bigg\{u \in \H_+ \mid \sum_{k=1}^{\infty}  a_k u(z_k)  \le 1 \bigg\}, \label{e-def-Ha} \\
  &\H_{a}^1 := \bigg\{u \in \H_+ \mid \sum_{k=1}^{\infty}  a_k u(z_k) = 1 \bigg\}. \label{e-def-Ha1}
\end{align}
\begin{lemma}
For any positive sequence $a=\left( a_k \right)_{k \in
\N}$, the convex set $\H_{a}$ is compact in $\H_+$.
\end{lemma}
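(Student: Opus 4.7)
The plan is to establish the two standard ingredients for compactness of a subset of the Fr\'echet space $\H$: closedness of $\H_a$, and uniform local boundedness (which, via hypoellipticity of $\L$, upgrades to relative compactness in the compact-open topology).

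Closedness is immediate. If $u_n \in \H_a$ and $u_n \to u$ in $\H$, then $u_n(z_k) \to u(z_k)$ for every $k$, and $u \ge 0$ since each $u_n \ge 0$. Fatou's lemma applied to the series of nonnegative terms gives
$$\sum_{k=1}^\infty a_k u(z_k) \le \liminf_{n \to \infty} \sum_{k=1}^\infty a_k u_n(z_k) \le 1,$$
so $u \in \H_a$.

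For relative compactness, the crucial step is a bound $\sup_K u \le M_K$ holding uniformly for $u \in \H_a$ on each compact $K \subset \OT$. Given such a $K$, the reference-set identity $\OT = \bigcup_k \mathrm{Int}(\A_{z_k}(\Omega_{z_k}))$ combined with compactness lets me cover $K$ by finitely many of these open sets, say with indices $k_1,\dots,k_p$, and then (by a standard shrinking argument) select compacts $K_i \subset \mathrm{Int}(\A_{z_{k_i}}(\Omega_{z_{k_i}}))$ with $K \subset \bigcup_{i=1}^p K_i$. The local Harnack inequality (H*), applied in the bounded open set $\Omega_{z_{k_i}}$ with base point $z_{k_i}$ and compact $K_i$, furnishes a constant $C_i$ with
$$\sup_{K_i} u \le C_i\, u(z_{k_i}) \le \frac{C_i}{a_{k_i}} \qquad \text{for every } u \in \H_a,$$
the last inequality because $a_{k_i} u(z_{k_i}) \le \sum_k a_k u(z_k) \le 1$. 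Taking the maximum over $i$ yields the desired $\sup_K u \le M_K$.

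To conclude, hypoellipticity under (H0) upgrades this $L^\infty$ bound to $C^k$ bounds on $K$ for every $k$ via the standard interior subelliptic estimates, so an Arzel\`a-Ascoli and diagonal argument extracts, from any sequence in $\H_a$, a subsequence converging in the compact-open topology; the limit is a distributional, hence smooth, solution of $\L u = 0$, so it lies in $\H$. Together with closedness, this proves the lemma. The main delicate point is arranging that each compact $K_i$ sits strictly inside the corresponding open attainable set so that (H*) applies with a constant depending only on the geometric data $(\Omega_{z_{k_i}}, K_i, z_{k_i}, \L)$, independently of $u \in \H_a$; this is precisely what the definition of a reference set combined with (H*) is designed to provide.
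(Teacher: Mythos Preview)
Your proof is correct and follows essentially the same strategy as the paper: reduce compactness to local boundedness via hypoellipticity, then obtain local bounds from the Harnack inequality (H*) applied at finitely many reference points, using the membership constraint $a_{k}u(z_{k})\le 1$ to control $u(z_{k_i})$. Your covering argument is in fact slightly more direct than the paper's: you use the defining cover $\OT=\bigcup_k \mathrm{Int}\big(\A_{z_k}(\Omega_{z_k})\big)$ to extract a finite subcover of $K$ and immediately apply (H*) in each $\Omega_{z_{k_i}}$, whereas the paper first covers $K$ by attainable sets based at auxiliary points $w_j$, then links each $w_j$ back to a reference point $z_{n_j}$ through an enlarged domain $\tilde\Omega_j$. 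You also make the closedness step (Fatou) and the hypoellipticity-to-compactness step (interior estimates plus Arzel\`a--Ascoli) explicit, which the paper leaves implicit.
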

\begin{proof}
By the hypoellipticity of $\L$, it is sufficient to show that $\H_{a}$ is
locally bounded on $\OT$. With this aim, we consider any compact set $K \subset \OT$. By (H2), and
Proposition~\ref{prop-covering}, there exist $w_1, \dots, w_k$ in $\OT$ such that
\begin{equation*}
    K \subset \mathrm{Int} \left(\A_{w_1} \left(\Omega_{w_1} \right) \right) \cup \dots \cup
    \mathrm{Int} \left(\A_{w_k} \left(\Omega_{w_k} \right)\right).
\end{equation*}
We claim that there exist $z_{n_1}, \dots, z_{n_k}$ in $\RR$ and $k$ compact sets $K_1, \dots, K_k$ such
that
\begin{equation} \label{eq-covering-K}
    K = K_1 \cup \dots \cup K_k, \quad \text{and} \quad K_j \subset \mathrm{Int} \left(\A_{z_{n_j}} \left(\tilde \Omega_{j}
\right) \right),
\end{equation}
for  $j=1, \dots, k$, where every $\tilde \Omega_{j}$ is a bounded open set containing $z_{n_j}$.

Indeed, let $K_j := K \cap \left(\overline{\mathrm{Int}\left(\A_{w_j} \left(\Omega_{w_j} \right)\right)}\right)$ for
$j=1, \ldots, k$. As in the proof of Proposition~\ref{prop-covering}, we take $z_{n_j} \in \RR$ such  that $w_j
\in \mathrm{Int}\left(\A_{z_{n_j}} \left(\Omega_{z_{n_j}} \right)\right)$. We then choose a bounded open set $\tilde
\Omega_j$ containing $\Omega_{z_{n_j}} \cup K_j$, and we have that $K_j \subset \mathrm{Int}\left(\A_{z_{n_j}} \left(\tilde
\Omega_{j} \right)\right)$ for $j=1,\ldots , k$. This proves \eqref{eq-covering-K}.

\medskip

As a consequence of \eqref{eq-covering-K}, the restricted uniform Harnack inequality \eqref{e-harnack} yields
\begin{equation*}
    \sup_K u \le C_K \, \max_{j=1, \dots, k} u \left( z_{n_j} \right),
\end{equation*}
for some positive constant $C_K$ depending only on $\L$ and $K$. On the other hand, from the definition of $\H_a$ it
follows that for any $u\in \H_{a}$, we clearly have that $u \left( z_j \right) \le \frac{1}{a_j}$. Consequently,
\begin{equation*}
    \sup_K u \le C_K \, \max_{j=1, \dots, k} \left\{\dfrac{1}{a_{n_j}}\right\}.
\end{equation*}
\end{proof}
Note that $\H_+$ is the union of the caps $\H_{a}$. Indeed, for every $u \in \H_+$, we easily see that $u \in \H_{a}$
where the sequence $a = \left( a_k \right)_{k \in \N}$ is defined as follows $a_k := \frac{b_k}{u(z_k) + 1}$ and
$\left( b_k \right)_{k \in \N}$ is any nonnegative sequence such that $\sum b_k \le 1$.

Thus, $\H_{a}$ is a metrizable \emph{cap} in $\H_+$ (\emph{i.e.} $\H_{a}$ is a compact convex set and $\H_+
\backslash \H_{a}$ is convex) and $\H$ is {\em well-capped} (\emph{i.e.} $\H_+$ is the union of the caps $\H_{a}$).
Furthermore, since $\H_+$
is a harmonic space in the sense of Bauer, it follows that $\H_a$ is a \emph{simplex} (see \cite{Becker,Choquet}).

\medskip

Let $\CC$ be a convex cone, we denote by $\exr \CC$ the set of all extreme rays of $\CC$. Analogously, if $K$ is a
convex set, we denote by $\ex K$ the set of the extreme points of $K$.

Since $\H_+$ is a proper cone (\emph{i.e.}  it contains no one-dimensional subspaces), we have
\begin{equation}\label{e-exH}
    \ex \H_{a} = \big\{ 0 \big\} \cup \big\{ \exr \H_+ \cap \H_{a}^1 \big\}.
\end{equation}

\medskip

We next prove Theorem~\ref{th-main} and Proposition~\ref{prop-separation}. The argument of the proof is standard, we
give here the details for reader's convenience.

\begin{proof}[Proof of Theorem~\ref{th-main}] Clearly,  $\H_+\neq \{0\}$ since $\mathbf{1}\in \H_+$. By the
Krein-Milman theorem and \eqref{e-exH},  it follows that $\exr \H_+$ contains a nontrivial ray. Consider any function
$u \in \exr \H_+$ such that $u \not = 0$, and let $\omega \in \R^m$ be as in Proposition~\ref{p-restr-harnack}.
We claim that, for every positive $s$, there exists a positive constant $\alpha_s$ such that
\begin{equation} \label{e-cone}
    u\left(\exp\left( s \left(\omega \cdot X + Y \right)(x,t) \right) \right) = \alpha_s u(x,t).
\end{equation}
Indeed, let
\begin{equation*}
    v_s(x,t) := C^{-1}_{s} u\left(\exp\left( s \left(\omega \cdot X + Y \right)(x,t) \right)\right),
\end{equation*}
and recall that by our hypothesis \eqref{eq_right_inv}, $v_s$ is a nonnegative solution of the equation $\L v_s = 0$ in
$\OT$.
Moreover, the restricted uniform Harnack inequality (Proposition~\ref{p-restr-harnack}) implies that $ v_s \leq u$.
Since $u \in \exr \H_+$, it follows that $v_s(z)= \nu_su(z)$ for all $z\in \OT$, where $\nu_s\geq 0$. If $\nu_s>0$, then
we obviously have \eqref{e-cone}. Suppose that $\nu_s=0$, then by applying the exponential map forward, it follows that
$u(x,t)=0$ for all $(x,t)\in \Omega_{T-s}$. This completes the proof if $T=\infty$. If $T<\infty$ we repeat the
argument for a vanishing sequence $(s_j)_{j \in \N}$ of positive  numbers.  This contradicts our assumption
that $u\neq 0$. Hence \eqref{e-cone} is proved.

In order to conclude the proof of \eqref{eq_funct_eq}, we note that for every $\omega \in \R^m$ satisfying
the assumption of Proposition~\ref{p-restr-harnack}, $z \in \OT, s>0$, and  any $k \in \N$ we
have
\begin{equation*}
    \exp\left( k s \left(\omega \cdot X + Y \right) \right)z = \underbrace{\exp\left( s(\omega \cdot X + Y) \right)
\circ \ldots \circ \exp\left( s( \omega \cdot X + Y) \right)}_{k \ \text{times}} z.
\end{equation*}
 By iterating \eqref{e-cone}, we then find
\begin{equation*}
    \alpha_{k s} u(x,t) = u\left(\exp\left( k s \left(\omega \cdot X + Y \right) \right)(x,t) \right) = \alpha_s^k
u(x,t).
\end{equation*}
Hence, $\alpha_{k} = \alpha_1^k$, and  $\alpha_{1/k} = \alpha_{1}^{1/k}$, for every $k \in \N$. Therefore,
$\alpha_{r} = \alpha_1^r$ for every $r \in \Q$. The conclusion of the proof thus follows from the continuity of $u$, by
setting $\beta := \log (\alpha_1)$.

For the proof of the last assertion of the theorem, take $z_0$ such that $u(z_0)>0$. Then by \eqref{eq_funct_eq} $u>0$
on the integral curve $\gamma$ given by \eqref{eq_int_curv}.
\end{proof}

\medskip

\begin{proof}[Proof of Proposition~\ref{prop-separation}] It is analogous to the proof of \eqref{e-cone}, which is
based only on the Harnack inequality and on the assumption concerning the (restricted) right-invariance of the
solutions in $\H_+$. We omit the details.
\end{proof}

\medskip

\begin{remark} \label{r-separation}
When considering the classical heat equation in $\OT$, or more generally when $X_0=0$, the separation principle
reads as follows (see \cite{KoranyiTaylor85,Murata93,Pinchover88} for the corresponding result in the nondegenerate case):

\emph{For any $u \in \exr \H_+$ there exists $\lambda\leq \lambda_0$ such that}
\begin{equation} \label{ex-sep}
    u\left( x, t \right) = \mathrm{e}^{- \lambda t} u(x,0) \qquad \forall (x,t) \in \OT,
\end{equation}
where $\lambda _0$ is the {\em generalized principal eigenvalue} of the operator $\L_0:=- \sum_{j=1}^m X_j^2$ defined by
\begin{equation}\label{lambda0}
  \lambda_0:=\sup \Big\{\lambda \in \R \mid \exists u_\lambda\gneqq 0\;
\mbox{ s.t. }\Big(- \sum\nolimits_{j=1}^m X_j^2 - \lambda \Big)u_\lambda =0 \mbox{ in } \R^N \Big\}.
\end{equation}
Moreover, using Choquet's theorem and the argument in the proof of \cite[Theorem~2.1]{Pinchover88}, \eqref{ex-sep}
implies that $u$  is a nontrivial extremal solution of the equation $\L w = 0$ in $\OT$ if and only if it is of the
form $u(x,t) := e^{\lambda t} u_\lambda(x)$,  where $\lambda\leq \lambda_0$ and $u_\lambda$ is a nonzero extremal
solution of the equation $\L_\lambda \phi = (- \sum_{j=1}^m X_j^2 -\lambda )\phi = 0$ in $\R^{N}$.
In particular, it follows that any nontrivial solution in $\H_+$ is strictly positive.

In fact, for the heat equation it is known (see for example \cite{Doob}) that any nonnegative extremal
 caloric function $u\neq 0$ in $\R^{N+1}$ or in $\R^{N}\times \R_-$ is of the form
\begin{equation*} %\label{eq_funct_heat}
    u(x,t) = \exp\left(\langle x, v \rangle + t \|v\|^2  \right),
\end{equation*}
where $v\in\R^N$ is a fixed vector.

When a drift term $X_0$ appears in the operator $\L$, \eqref{ex-sep} does not holds necessarily, even for nondegenerate
parabolic equations. Consider, for instance, the nondegenerate Ornstein-Uhlenbeck operator
\begin{equation}\label{e-OU}
     \L u : = \p_t u - \Delta u - \langle x, \nabla u \rangle = 0 \qquad \mbox{ in }{ \R}^{N} \times \R_T.
\end{equation}
Clearly, $\L$ is of the form \eqref{e1} with $X_j = \partial_{x_j}, j=1, \dots, N$, and $X_0 = \langle x, \nabla \rangle
\simeq x$. Moreover, $\L$ is invariant with respect to the following change of variable. Fix any $(y,s) \in
\R^{N+1}$, and set $v(x,t) := u(x + e^{-t} y, t+s)$. We have that $\L v = 0$ in $\R^{N+1}$, if and only if $\L u = 0$
in $\R^{N+1}$. Thus, the Ornstein-Uhlenbeck operator satisfies (H0), (H1) and (H2). Note that in this case, the
restricted Harnack inequality reads as
\begin{equation*} %\label{e-H-OU}
     u \left(e^s x, t-s \right) \le C_s u (x,t)\qquad \forall (x,t) \in \R^{N+1},
\end{equation*}
and that \eqref{eq_right_inv} does not hold for $y\neq 0$. On the other hand, the expression of a \emph{minimal}
solution of the equation in one space variable, given in \cite{CranstonOreyRosler,Pinchover1996}, is
\begin{equation*}
     u_\lambda(x,t) = \exp \left( \lambda^2 e^{2t} - \sqrt{2} \lambda x e^t \right),
\end{equation*}
where $\lambda\in\R$. Clearly, \eqref{ex-sep} does not hold for $u_\lambda$.
\end{remark}

\mysection{Degenerate equations without drift}\label{sec_Parabolic}

We first derive from (H*) a Harnack inequality for the operator $\L - \lambda$, where $\L$ is of the form \eqref{e1}
and $\lambda$ is a real constant. After that, we focus on operators $\L$ such that the drift term $X_0$ does not appear.
In particular, we prove a representation theorem for the extremal nonnegative solutions of $\L u = 0$ in $\OT$, when $X_0
= 0$ and
the Lie group on $\R^N$ is nilpotent and stratified.

\begin{proposition} \label{H*-lambda}
Let $\L$ be an operator of the form \eqref{e1} that satisfies {\rm (H0)}, {\rm (H1)}, and {\rm (H2)}.
Let $\O \subseteq\R^{N+1}$ be an open set and let $z_0 = (x_0,t_0) \in \O$. For any compact set $K \subset
\mathrm{Int} \left(
\A_{z_0}(\O) \right)$ and for every $\lambda \in \R$ there exists a positive constant $C_{K, \lambda}$, only depending
on $\O, K, z_0, \lambda$ and $\L$, such that
\begin{equation*} %\label{lHarnack-lambda}
     \sup_K u \le C_{K, \lambda} \, u(z_0),
\end{equation*}
for any nonnegative solution $u$ of the equation $\L w-\lambda w = 0$ in $\Omega$.
\end{proposition}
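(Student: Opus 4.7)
The plan is to reduce the Harnack inequality for $\L-\lambda$ to the already established Harnack inequality (H*) for $\L$ itself, via the standard exponential gauge transformation in the time variable.

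First, I would observe that if $u$ is a nonnegative solution of $(\L-\lambda)u=0$ in $\Omega$, then the function
\begin{equation*}
  v(x,t) := \mathrm{e}^{-\lambda t} u(x,t)
\end{equation*}
is a nonnegative solution of $\L v = 0$ in $\Omega$. Indeed, since every $X_j$ (including $X_0$) is a first-order operator in the $x$-variables only, it commutes with the multiplication by $\mathrm{e}^{-\lambda t}$, whereas $\partial_t(\mathrm{e}^{-\lambda t}u)=\mathrm{e}^{-\lambda t}(\partial_t u-\lambda u)$. Thus
\begin{equation*}
  \L v = \mathrm{e}^{-\lambda t}\bigl(\partial_t u - \lambda u - \sum_{j=1}^m X_j^2 u - X_0 u\bigr) = \mathrm{e}^{-\lambda t}(\L u - \lambda u) = 0.
\end{equation*}

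Next, since $v\ge 0$ solves $\L v=0$ in $\Omega$, I would apply the local Harnack inequality (H*) directly to $v$ on the pair $(K,z_0)$, obtaining a constant $C_K>0$ depending only on $\Omega$, $K$, $z_0$ and $\L$ such that $\sup_K v \le C_K\, v(z_0)$. Rewriting this in terms of $u$ yields, for every $(x,t)\in K$,
\begin{equation*}
  u(x,t) \le C_K\,\mathrm{e}^{\lambda(t-t_0)} u(z_0).
\end{equation*}

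Finally, since $K$ is compact, its projection onto the time axis is contained in some compact interval, so $\sup_{(x,t)\in K}\mathrm{e}^{\lambda(t-t_0)}$ is a finite constant depending only on $K$, $z_0$ and $\lambda$. Setting
\begin{equation*}
  C_{K,\lambda} := C_K\,\sup_{(x,t)\in K}\mathrm{e}^{\lambda(t-t_0)}
\end{equation*}
gives $\sup_K u \le C_{K,\lambda}\,u(z_0)$, which is the desired inequality. There is no real obstacle here; the entire argument rests on the elementary fact that the $X_j$ involve no $t$-derivative, so the gauge $\mathrm{e}^{-\lambda t}$ converts $\L-\lambda$ into $\L$.
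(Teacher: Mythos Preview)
Your proof is correct and is essentially identical to the paper's own argument: define $v(x,t)=\mathrm{e}^{-\lambda t}u(x,t)$, note that $\L v=0$, apply (H*) to $v$, and absorb the exponential factor into the constant. The paper writes the resulting constant as $C_K\max_{(x,t)\in K}\mathrm{e}^{\lambda(t_0-t)}$, which appears to carry a sign typo; your expression $C_K\sup_{(x,t)\in K}\mathrm{e}^{\lambda(t-t_0)}$ is the correct one, though of course either is a finite constant depending only on the stated data.
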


\begin{proof}
If $u$ is a nonnegative solution of $\L w - \lambda w = 0$ in $\Omega$, then $u_\lambda(x,t) := e^{-\lambda t}
u(x,t)$ is a nonnegative solution of $\L w = 0$ in $\Omega$. The claim then follows from (H*) with
$C_{K, \lambda} := C_{K} \max_{(x,t) \in K} e^{\lambda (t_0-t)}$.
\end{proof}

We next consider operators $\L$ such that the drift term $X_0$ does not appear. We will use the following notation
\begin{equation} \label{e110}
  \L_0 := - \sum_{j=1}^m X_j^2 \, , \qquad  \L_\lambda := \L_0 - \lambda.
\end{equation}
We consider the degenerate  \emph{elliptic} equation $\L_\lambda u = 0$ in $\R^N$ and its parabolic counterpart $\L
= \partial_t u + \L_\lambda u = 0$ in $\R^N \times ]0,T[$. In this case H\"ormander's condition (H0) is equivalent to:
\begin{description}
  \item[{\rm (H0')}]
$ \qquad \text{rank Lie}\{X_{1},\dots,X_{m}\}(x) = N \quad \text{for every} \, x \in \R^{N}.$
\end{description}
Moreover, (H1) is equivalent to:
\begin{description}
  \item[{\rm (H1')}] there exists a Lie group $\mathbb{G}_0 = \left(\R^{N},\cdot \right)$ such that the vector fields
$X_{1}, \dots, X_{m}$ are invariant with respect to the left translation of $\mathbb{G}_0$.
\end{description}
Indeed, as (H1') is satisfied, then a group $\mathbb{G} = \left(\R^{N+1},\circ \right)$ satisfying (H1) is defined by
$\mathbb{G}:=\mathbb{G}_0\times\R$, with the operation
\begin{equation}
  (x,t) \circ (y,s) := (x \cdot y, t+s) \qquad (x,t), (y, s) \in \R^{N+1}.
\end{equation}
Finally, Chow-Rashevskii theorem (see for example \cite{Montgomery})  implies that for any
open cylinder $\Omega = O \times I$, with $O \subseteq\R^{N}$ an open connected set, and  an
interval $I \subset \R$, we have for every $(x_0, t_0) \in \Omega$ that
\begin{equation} \label{eq-Prop}
  \A_{(x_0, t_0)} (\Omega) = \Omega \, \cap\, \{(x,t)\mid t\leq t_0\},
\end{equation}
whenever (H0') holds. Thus condition (H2) is satisfied with any $\omega \in \R^m$. In the sequel of the present section
we will always consider $\omega = 0$.

Based on Proposition \ref{H*-lambda}, we next prove a Harnack inequality for the operators $\L_\lambda$. We
refer to the monograph \cite{LibroBLU} and to the reference therein for an exhaustive bibliography on Harnack
inequalities for operators of the form $\L_0$.

% Alessia will find good references on Harnack inequality for Sub-Laplacians (for instance: Capogna, Danielli,
% Garofalo Comm. in PDEs, (1993), Citti, Garofalo, Lanconelli Am. J. Math. (1993))

\begin{proposition} \label{H*-lambda-stationary}
Let $\L_0$ be an operator of the form \eqref{e110}, satisfying {\rm (H0')}, and {\rm (H1')}, and let $\lambda$ be a
given constant. Let $O \subseteq\R^{N}$ be an open connected set and let $x_0 \in O$. For any compact set $H \subset O$
there exists a positive constant $C_{H, \lambda}$, only depending on $O, H, x_0, \lambda$ and $\L_0$, such that
\begin{equation*} %\label{lHarnack-lambda}
     \sup_H u \le C_{H, \lambda} \, u(x_0),
\end{equation*}
for any nonnegative solution $u$ of $\L_\lambda u = 0$ in $O$.
\end{proposition}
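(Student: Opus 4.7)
The plan is to reduce the statement to its parabolic counterpart, Proposition~\ref{H*-lambda}, by lifting $u$ trivially in the time variable. Given a nonnegative solution $u$ of $\L_\lambda u=0$ in $O$, I first select a bounded open connected subdomain $O'\subset O$ whose closure is contained in $O$ and which contains $H\cup\{x_0\}$; this is available by a chain-of-balls argument since $O$ is open and connected and $H\cup\{x_0\}$ is compact. Setting $\Omega:=O'\times\,]-1,1[$, $z_0:=(x_0,0)$, and $U(x,t):=u(x)$, a direct computation using $\partial_t U\equiv 0$ gives
$$\L U-\lambda U=\partial_t U+\L_0 U-\lambda U=\L_\lambda u=0\quad\text{in }\Omega,$$
where $\L:=\partial_t+\L_0$ is the time-extended operator, still of the form \eqref{e1} with $X_0=0$.

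Next I verify that $\L$ inherits all three structural hypotheses from the stationary data. (H0') implies (H0) because the drift reduces to $Y=-\partial_t$, which commutes with every $X_j$ and supplies the missing direction $\partial_t$; (H1') gives (H1) via the product Lie group $\mathbb{G}:=\mathbb{G}_0\times\R$ with operation $(x,t)\circ(y,s):=(x\cdot y,\,t+s)$; and (H2) holds with $\omega=0$ by virtue of \eqref{eq-Prop}. With the lift justified, \eqref{eq-Prop} also identifies
$$\A_{z_0}(\Omega)=O'\times\,]-1,0],$$
whose interior in $\R^{N+1}$ is $O'\times\,]-1,0[$, so the compact slice $K:=H\times\{-\tfrac12\}$ lies in $\mathrm{Int}(\A_{z_0}(\Omega))$. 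Proposition~\ref{H*-lambda} then delivers a constant $C_{K,\lambda}>0$ with $\sup_K U\le C_{K,\lambda}\,U(z_0)$, and since $U(\,\cdot\,,-\tfrac12)\equiv u$ on $H$ while $U(z_0)=u(x_0)$, this is exactly the sought estimate with $C_{H,\lambda}:=C_{K,\lambda}$.

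The only conceptual obstacle is the bookkeeping needed to show that the time-extended operator $\L$ genuinely inherits (H0), (H1), (H2) from their stationary primed versions; beyond that the argument is essentially automatic, because the auxiliary time coordinate contributes a commuting drift $-\partial_t$ that makes $z_0=(x_0,0)$ forward-reach every stationary compact set in the interior of its attainable set.
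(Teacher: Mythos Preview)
Your proof is correct and follows essentially the same route as the paper: lift $u$ to a time-independent solution of $\partial_t w+\L_\lambda w=0$ on a space-time cylinder, invoke the Chow--Rashevskii description \eqref{eq-Prop} of the attainable set, and apply Proposition~\ref{H*-lambda} to a compact time-slice of $H$. The only cosmetic differences are your choice of interval $]-1,1[$ and slice $t=-\tfrac12$ (the paper uses $]-2,1[$ and $t=-1$) and your explicit passage to a bounded subdomain $O'\Subset O$, which is in fact a bit more careful than the paper since the underlying Harnack inequality (H*) is stated for bounded open sets.
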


\begin{proof}
If $u$ is a nonnegative solution of $\L_\lambda w = 0$ in $O$, then the function $v(x,t) := u(x)$ is a nonnegative
solution
of $\partial_t w + \L_\lambda w = 0$ in $\Omega := O \times I$, where $I$ is any open interval of $\R$. We choose $I =
]-2,1[$, $z_0 := (x_0, 0)$ and $K := H \times \{ -1 \}$. Then Chow-Rashevskii theorem implies $\A_{z_0}(\O) = \O
\cap \{ t \le 0 \}$, thus $K \subset \mathrm{Int} \left( \A_{z_0}(\O) \right)$. We then apply Proposition
\ref{H*-lambda} to $v$, and we obtain the Harnack estimate for $u$.
\end{proof}

We consider now operators of the form $\L_0$, satisfying {\rm (H0')}, and {\rm (H1')} with the further property that
they are invariant with respect to a family of dilations. Specifically, we suppose that $\R^N$ can be split  as follows
$$
  \R^N=\R^{m}\times \R^{m_2}\times \cdots \times \R^{m_n}, \quad \mbox{ and denote }
  x= (x^{(m)}, x^{(m_2)} \ldots, x^{(m_n)})\in \R^N,
$$
where $n\geq 2$. We assume that there exists a group of dilations $D_r:\R^N \to \R^N$, defined for every $r>0$ as
follows
\begin{eqnarray*}
D_r (x)= D_r \left(x^{(m)}, x^{(m_2)}, \ldots, x^{(m_n)}\right):= \left(r x^{(m)}, r^2 x^{(m_2)}, \ldots, r^n
x^{(m_n)}\right),
\end{eqnarray*}
which are automorphisms of $(\R^N,\cdot)$. In this case we say that $\ci = \left(\R^N, \cdot, (D_r)_{r>0}\right)$ is a
{\em homogeneous Lie group}. It is well-known that $\ci$ is nilpotent (see for example \cite[Proposition~1.3.12]{LibroBLU})
and compactly generating. Moreover, the following two properties follow from the homogeneous structure of
Carnot groups (see for example \cite[Theorem~1.3.15]{LibroBLU}).
\begin{equation} \label{eq-firstlayer}
  (x \cdot y)^{(m)} = x^{(m)} + y^{(m)} \qquad \text{for every}  \quad x, y \in \R^N.
\end{equation}
% it follows that the operation of the group acts in the first $m$ variables as
% follows. If $x, y \in \R^N$ we have that $(x \cdot y)^{(m)} = x^{(m)} + y^{(m)}$ .
\begin{equation} \label{eq-lastlayer}
  x \cdot y = y \cdot x = x + y \qquad \text{whenever}  \quad x= (0^{(m)}, 0^{(m_2)} \ldots, 0^{(m_n)}, x^{(m_n)}).
\end{equation}
We point out that \eqref{eq-lastlayer} means, in particular, that right and left multiplications by a point $x$
belonging to the last layer of the group agree.

When the vector fields $X_1, \dots, X_m$ are homogeneous of degree $1$ with respect to the dilation $(D_r)_{r>0}$,
we say that $\ci:=\left(\R^N,\cdot,(D_r)_{r>0}\right)$ is a {\it Carnot group} and $X_1,\ldots, X_{m}$ are called {\it
generators} of $\ci$. In this case, it is always possible to choose the $X_j$'s such that $X_j = \partial_{x_j} +
\sum_{k=m+1}^{N} b_{jk}(x)\p_{x_{k}}$, for $j=0,\ldots, m$, and the coefficients $b_{jk}(x)$ are polynomials. Moreover
all commutators $[X_j, X_k]$ only acts on $(x^{(m_2)}, \ldots, x^{(m_n)})$, third order commutators  $[X_i, [X_j, X_k]]$
only acts on $(x^{(m_3)}, \ldots, x^{(m_n)})$, $n$-th order commutators only act on $x^{(m_n)}$.

The corresponding {\em sub-Laplacian} $\varDelta_{\mathbb{G}} = \sum_{j=1}^m X_j^2$ agrees with $- \L_0$, and is always
self-adjoint, that is $\varDelta_{\mathbb{G}}^* = \varDelta_{\mathbb{G}}$. For an extensive treatment on sub-Laplacians
on Carnot groups we refer to the book \cite{LibroBLU} by Bonfiglioli, Lanconelli and Uguzzoni.

\begin{example}\label{ex0-ell}  {\sc Heisenberg group.} \ $\mathbb{H} := \left(\R^3, \cdot \right)$, is defined by the
multiplication
\begin{equation*}
     (\xi,\eta, \zeta) \cdot (x,y,z) := \left( \xi + x, \eta + y, \zeta + z + (\eta x - \xi y) \right) \quad
(\xi,\eta, \zeta), (x,y,z) \in \R^3.
\end{equation*}
The vector fields $X_1$ and $X_2$
\begin{equation*}
     X_1 := \partial_{x} - \tfrac12 y \partial_{z}, \quad X_2 := \partial_{y} + \tfrac12 x \partial_{z},
\end{equation*}
are invariant with respect to the left translation of $\mathbb{H} = \left(\R^3, \cdot \right)$, and with respect to the
following dilation in $\R^3$
\begin{equation*}
     D_r (x,y,z) := \left( r x, r y, r^2 z \right)\qquad (x,y,z) \in \R^3, r > 0.
\end{equation*}
% Moreover $X_1^* = - X_1$, and $X_2^* = - X_2$, then its Sublaplacian $\varDelta_{\mathbb{H}} = X_1^2 + X_2^2$
% is self-adjoint.
Note that we have $[X_1, X_2] = \partial_{z}$, and any other commutator is zero.

The sub-Laplacian on the Heisenberg group acts on a function $u = u(x,y,z)$ as follows
\begin{equation} \label{e-HG}
  \varDelta_{\mathbb{H}} u:= \left(\partial_{x} - \tfrac12 y \partial_{z}\right)^2 u(x,y,z) + \left( \partial_{y} +
\tfrac12 x \partial_{z} \right)^2 u(x,y,z).
\end{equation}
\end{example}

\medskip

If $- \L_0$ is a sub-Laplacian in a Carnot group $\ci:=\left(\R^N,\cdot,(D_r)_{r>0}\right)$, we define
a homogeneous group $\mathbb{G} =\left(\R^{N+1},\circ,(\delta_r)_{r>0}\right)$
$$
    (x,t) \circ (\xi, \t) := ( x \cdot \xi, t + \t),\qquad \d_r (x,t) := \big( D_r x, r^2 t \big)
$$
for every $(x,t) (\xi, \tau) \in \R^{N+1}$ and for any $r>0$. For $\omega = 0$ we have $\exp\left( \t (\omega \cdot X
+ Y) \right) = (0, -\t)$. Thanks to the invariance with respect to translations and dilations, the restricted uniform
Harnack inequality of Proposition~\ref{p-restr-harnack} for such an operator $\L$ reads as
\begin{equation} \label{e-harnack-Heisenberg}
  u\left(x, t-\tau \right) \le C_{\tau}  u(x,t) \qquad \text{for every} \ (x,t) \in \OT, \tau >0,
\end{equation}
and for any nonnegative solution of $\L u = 0$ in $\OT$.

The main result of this Section is the following version of the separation principle.

\begin{theorem} \label{H*-lambda-repr-par}
Let $\ci = \left(\R^N, \cdot, (D_r)_{r>0}\right)$ be a Carnot group, let $\varDelta_{\mathbb{G}}$ be its sub-Laplacian,
and assume that $\L_0  = - \sum_{j=1}^m X_j^2$ agrees with $- \varDelta_{\mathbb{G}}$. If $u$ is an extremal
nonnegative solution of $\L u = \partial_t u - \varDelta_{\mathbb{G}} u = 0$ in $\OT$, then
\begin{equation*}
  u(x,t) = \exp \left( \langle x, \alpha \rangle  + |\alpha|^2 t \right)
\end{equation*}
for some vector $\alpha =( \alpha_1, \dots, \alpha_m, 0, \dots, 0)$. Moreover, any nontrivial solution $v\in \H_+$ does
not depend on the `degenerate' variables $x_{m+1},\ldots,x_N$, and $v$ is strictly positive.
\end{theorem}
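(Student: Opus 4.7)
The plan is to reduce the parabolic problem to its elliptic counterpart, Theorem \ref{H*-lambda-repr}, via a time-separation step, and then apply Choquet's integral representation for the final assertion about general $v\in\H_+$. I first apply Theorem \ref{th-main} with $\omega = 0$: since $X_0 = 0$ gives $Y = -\p_t$, one has $\exp(s(0\cdot X + Y))(x,t) = (x, t-s)$, and the $t$-translation invariance of $\L$ immediately verifies the right-invariance hypothesis \eqref{eq_right_inv}. Theorem \ref{th-main} then produces a $\beta \in \R$ such that $u(x,t) = \mathrm{e}^{\beta t} u_\beta(x)$ with $u_\beta := u(\cdot, 0)$; substituting back into $\p_t u = \Delta_{\mathbb{G}} u$ yields the elliptic equation $\Delta_{\mathbb{G}} u_\beta = \beta u_\beta$ on $\R^N$. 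Extremality transfers to $u_\beta$ by a standard lifting: if $u_\beta = v_1 + v_2$ were a nontrivial decomposition in the cone of nonnegative solutions of the elliptic equation, the lifts $u_i(x,t) := \mathrm{e}^{\beta t} v_i(x)$ would decompose $u$ in $\H_+$, contradicting the extremality of $u$.

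Next, I invoke Theorem \ref{H*-lambda-repr} to conclude that $u_\beta(x) = \exp(\langle x^{(m)}, \alpha\rangle)$ for some $\alpha\in\R^m$ with $|\alpha|^2 = \beta$; the constraint $|\alpha|^2 = \beta$ follows by direct substitution, noting that $\sum_j X_j^2\, \mathrm{e}^{\langle x^{(m)}, \alpha\rangle} = |\alpha|^2 \mathrm{e}^{\langle x^{(m)}, \alpha\rangle}$ since the non-horizontal parts of the $X_j$ annihilate functions depending only on $x^{(m)}$. Combining with the first step yields $u(x,t) = \exp(\langle x, \alpha\rangle + |\alpha|^2 t)$ with $\alpha \in \R^N$ extended by zeros in the degenerate positions $m+1, \dots, N$. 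For the ``moreover'' assertion, the caps $\H_a$ developed in Section \ref{sec_funct} are simplices in the sense of Choquet, so Choquet's theorem represents every nontrivial $v \in \H_+$ as a positive integral over $\exr \H_+$; since every nonzero extremal is strictly positive and independent of $x^{(m+1)},\dots,x^N$, the same properties pass to $v$.

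The substantive difficulty is Theorem \ref{H*-lambda-repr} itself, whose proof realizes the ``last-layer separation'' idea from the introduction. By \eqref{eq-lastlayer}, right-translation by any central (last-layer) element of $\mathbb{G}_0$ coincides with left-translation and hence preserves $\L$; combined with Chow--Rashevskii, one constructs a chain of $\L$-admissible paths whose net spatial displacement lies in the center, so that Proposition \ref{prop-separation} yields $u_\beta(x+a) = c(a)\, u_\beta(x)$ for every central $a$, with $c$ a multiplicative character, hence $c(a) = \exp(\langle \gamma^{(m_n)}, a^{(m_n)}\rangle)$. Iterating down the lower central series of $\mathbb{G}_0$ produces a factorization $u_\beta(x) = \exp(\langle \gamma, x^{(>m)}\rangle)\, w(x^{(m)})$, and a Liouville-type radial argument on the resulting reduced equation for $w$ (whose asymptotic behavior for $\gamma \neq 0$ becomes oscillatory and so incompatible with positivity) forces $\gamma = 0$ in every degenerate direction, leaving $\Delta w = \beta w$ on $\R^m$ with its classical exponential extremals.
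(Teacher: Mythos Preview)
Your reduction in the first two paragraphs is circular as written: in the paper, Theorem~\ref{H*-lambda-repr} is \emph{derived from} Theorem~\ref{H*-lambda-repr-par}, not the other way around. You acknowledge this and attempt an independent proof of the elliptic statement in your third paragraph, but that sketch has two genuine gaps.

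First, the ``iterating down the lower central series'' step does not work as stated. Property \eqref{eq-lastlayer}, which makes right-translation by last-layer elements coincide with left-translation (and hence preserve $\L$), holds \emph{only} for the center. For an element $a$ in an intermediate layer, right-translation by $a$ does not commute with $\L$, so Proposition~\ref{prop-separation} is not available and you cannot extract an exponential factor there. Moreover, even after obtaining $u_\beta(x)=\mathrm{e}^{\langle\gamma,x^{(m_n)}\rangle}v(x')$ for the last layer, the induced equation for $v$ is \emph{not} the sub-Laplacian on the quotient group when $\gamma\neq 0$ (on $\mathbb{H}$ it becomes a magnetic-type equation $(\partial_x-\gamma y/2)^2v+(\partial_y+\gamma x/2)^2v=\beta v$), so the induction does not proceed until $\gamma=0$ is established.

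Second, the endgame ``oscillatory behavior incompatible with positivity forces $\gamma=0$'' is asserted but not proved; whatever its status, it is not an elementary observation, and in any case you would need to rule out positive solutions of a whole family of magnetic-type equations layer by layer.

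The paper's argument avoids both issues by extracting \emph{more} from Proposition~\ref{prop-separation} than the bare character relation $u_\beta(x+a)=\mathrm{e}^{\langle\gamma,a\rangle}u_\beta(x)$. Via the Baker--Campbell--Hausdorff formula one constructs an $\L$-admissible chain whose net effect is $(x,t)\mapsto(x+c\,s^{n}\,e_j^{(m_n)},\,t-ks)$; Proposition~\ref{prop-separation} combined with time separation then gives
\[
u\big(x+c\,s^{n}\,e_j^{(m_n)},\,t\big)=\mathrm{e}^{\beta s}\,u(x,t)\qquad\text{for all }s>0.
\]
Reading this as a function of the last-layer increment $a=c\,s^{n}$, one obtains $u(x+a\,e_j^{(m_n)},t)=\mathrm{e}^{\beta(a/c)^{1/n}}u(x,t)$, which is incompatible with the smoothness of $u$ at $a=0$ unless $\beta=0$. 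Thus $u$ is \emph{constant} (not merely exponential) in the last-layer variables, and one descends cleanly to the sub-Laplacian on the quotient Carnot group. Your character argument discards precisely this $s$ versus $s^{n}$ scaling information, which is the mechanism that forces the degenerate dependence to vanish.
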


\begin{proof} We first give the proof in the simplest (nontrivial) case of the Heisenberg group $\mathbb{H}$, in order
to show the main idea of the proof. Let $c$ be any real constant, and let $(x,y,z,t)$ be a given point of $\R^4$. A
direct computation shows that
\begin{multline} \label{eq_funct-H}
  \exp\!\big( s \!\left(\!- c X_2 \!-\! \partial_t \right)\! \big)\! \exp\!\big(s \!\left(\!- c X_1 \!-\! \partial_t \!\right)\!  \big)\!
  \exp\!\big(s \!\left(\!c X_2 \!-\! \partial_t \!\right)\! \big)\!  \exp\!\big(s \!\left(\! c X_1 \!-\! \partial_t \!\right)  \! \big)\!(x,y,z,t)=\\[2mm]
(x,y,z + c^2 s^ 2, t - 4 s),
\end{multline}
for every positive $s$. Note that for any $u\in\H_+$, we have that
$$
  v(x,y,z,t):=u(x,y,z + c^2 s^ 2, t - 4 s) \in \H_+.
$$
Since hypothesis (H2) holds true, Proposition \ref{prop-separation} implies that for any extremal solution $u\in\H_+$
there exists a positive constant $C_s$, that may depend on $c$, such that
\begin{equation*}
    u(x,y,z + c^2 s^ 2, t - 4 s) = C_s u(x,y,z, t)   \qquad \forall (x,y,z, t) \in \OT,
\end{equation*}
and for every positive $s$. The standard argument used in the last part of the proof of Theorem \ref{th-main} implies
that
\begin{equation}\label{eq_funct_H}
    u(x,y,z + c^2 s^ 2, t - 4 s) = e^{\beta_c s} u(x,y,z, t)   \qquad \forall (x,y,z, t) \in \OT,
\end{equation}
and for every positive $s$. Note that for $c=0$, the above identity restores \eqref{eq_funct_eq-nodrift}
\begin{equation*}
    u(x,y,z, t -s) = e^{\tilde\beta_0 s} u(x,y,z, t).
\end{equation*}
Combining it with \eqref{eq_funct_H} we find
\begin{equation*}
    u(x,y,z + c^2 s^ 2, t) = e^{\tilde\beta_c s} u(x,y,z, t)   \qquad \forall (x,y,z, t) \in \OT,
\end{equation*}
for some real constant $\tilde\beta_c$. The above identity can be written equivalently as
\begin{equation}\label{eq_funct_H-bis}
    u(x,y,z', t) = e^{\tilde\beta_c \sqrt{|z'-z|}} u(x,y,z, t)   \qquad \forall (x,y,z, t), (x,y,z', t) \in \OT.
\end{equation}
We finally note that \eqref{eq_funct_H-bis} contradicts the regularity of $u$ unless
$\tilde\beta_c=0$. Since $u$ is smooth by H\"ormander's condition (H0), we have necessarily $\tilde\beta_c=0$.
Hence $u = u(x,y,t)$ is a nonnegative extremal solution of $\partial_t u = \Delta u$, and the conclusion of the proof,
in the case of the Heisenberg group $\mathbb{H}$, follows from the classical representation theorem for the heat
equation \cite{Pinchover88}.

Before considering any Carnot Group $\ci$, we point out that the above proof only relies on the fact that $\partial_z$
is the highest order commutators of a nilpotent Lie group. In particular, the operator $\L$ is translation invariant
with respect to $z$ and that $\partial_z$ has been obtained by \eqref{eq_funct-H}. Then Proposition
\ref{prop-separation} gives \eqref{eq_funct_H-bis}, that in turns contradicts the smoothness of $u$.

Let $\L = \partial_t - \varDelta_{\mathbb{G}}$, where $\varDelta_{\mathbb{G}}$ is a sub-Laplacian on a Carnot
group $\ci$. We recall the Baker-Campbell-Hausdorff formula. If $X_j, X_k$ are the vector fields belonging to the first
layer of $\ci$, then
\begin{equation*} %\label{eq_funct_h-j}
\begin{split}
  \exp\big(s \left(X_j - \partial_t \right) \big) & \exp\big(s \left(X_k - \partial_t \right)  \big)(x,t) = \\
  &  \exp\left(s \left( \big( X_j + X_k \big) - 2 \partial_t\right)  + \tfrac{s^2}{2}\big[ X_k, X_j \big] + R_{jk}(s)
\right) (x,t)
\end{split}
\end{equation*}
for any $s \in \R$, where $R_{jk}$ denotes a polynomial function of the form
\begin{equation*} %\label{eq_funct_h-j}
  R_{jk}(s) = \sum_{i=3}^m c_{i,jk} s^i
\end{equation*}
whose coefficients $c_{i,jk}$'s are sums of commutators of $X_1, \dots, X_m$ of order $i$. In particular, we have
\begin{multline*} %\label{eq_funct_h-j}
  \exp\!\big(s \!\left(\!- X_j \!-\! \partial_t \!\right)\! \big)\! \exp\!\big(s \!\left(\!-X_k \!-\! \partial_t \!\right)\!  \big)\! \exp\!\big(\!s\! \left(\!X_j\! - \!\partial_t \!\right) \!\big)\!  \exp\!\big(\!s \!\left(\!X_k \!-\! \partial_t \!\right)\!  \big)\!(x,t)= \\[2mm]
    \exp\left(-4 s \partial_t  + \tfrac{s^2}{2}\big[ X_k, X_j \big] + R_{jk}(s) \right) (x,t).
\end{multline*}
We can express the variable $x^{(m_n)}$ of the last layer of $\ci$ in terms of commutators of order $n$ with zero
reminder. In particular, by repeating the use of the Baker-Campbell-Hausdorff formula, we can express every vector
$x_j^{(m_n)}$ of a basis of the last layer of $\ci$ as
\begin{equation*} %\label{eq_funct_h-j}
  x_j^{(m_n)} = \exp\big( - X_{j_k} \big) \dots \exp\big( - X_{j_1}),
\end{equation*}
for a suitable choice of $X_{j_1}, \dots, X_{j_k}$ in the fist layer of $\ci$. In particular, we have that
\begin{equation*} %\label{eq_funct_h-j}
 u\big(x + s^n x_j^{(m_n)},t-k s\big) = u \left(\exp\big( - s( X_{j_k} - \partial_t) \big) \dots \exp\big( - s (
X_{j_1}- \partial_t)\big) (x,t) \right),
\end{equation*}
for every $(x,t) \in \OT$ and every positive $s$. On the other hand, by \eqref{eq-lastlayer} $x + s
x_j^{(m_n)}$ is at once a \emph{right} and \emph{left} translation on the group $\ci$. Then, in particular, $(x,t)
\mapsto u \big(x + s x_j^{(m_n)},t\big)$ is a solution of $\L_0 u = 0$ for every $s \in \R$.
Thus, if $u$ is an extremal solution of $\L_0 u = 0$, Proposition
\ref{prop-separation}, combined with \eqref{eq_funct_eq-nodrift}, yields
\begin{equation*} %\label{eq_funct_h-j}
 u\big(x + c s^n x_j^{(m_n)},t\big) = e^{\beta s} u(x,t),
\end{equation*}
for every $x \in \R^N$ and $s \ge 0$. Here $c$ is a real constant that may depend on $x_j^{(m_n)}$. As in the
case of the Heisenberg group, this identity contradicts the smoothness of $u$, unless $u$ doesn't depend on $x^{(m_n)}$.
Thus, $u = u\left(x^{(m)}, \dots, x^{(m_{n-1})} \right)$ is an extremal solution of $\L' u = 0$, where $\L' = \partial_t
- \varDelta_{\mathbb{G}'}$, and $\varDelta_{\mathbb{G}'}$ is a sub-Laplacian on a Carnot group $\mathbb{G}'$ on $\R^{N -
m_n}$ defined as the restriction of $\mathbb{G}$ to the first $N-m_n$ variables of $\R^N$. The conclusion of the proof
follows by a backward iteration of the above argument.
\end{proof}

\mysection{Stationary equations} \label{sec_Elliptic}
In the present section we consider stationary equations, and we prove a result analogous to Theorem
\ref{H*-lambda-repr-par}. We first introduce some notations. Fix any $\lambda \in \R$, and consider an operator
$\L_\lambda$ of the form \eqref{e110} on $\R^N$, satisfying {\rm (H0')}, and {\rm (H1')}. We set
\begin{align}
  & \H_\lambda := \Big\{u \in C^\infty (\R^N) \mid \L_\lambda u = 0 \quad \mbox{in } \R^N \Big\}, \label{e-def-Hl} \\
  & \H_\lambda^+ := \Big\{u \in \H_\lambda \mid u \ge 0, u(0)=1 \Big\}. \label{e-def-Hl+}
\end{align}
Note that in light of Proposition~\ref{H*-lambda-stationary}, the generalized principal eigenvalue $\lambda_0$
defined in \eqref{lambda0} can be characterized as
\begin{equation*} %\label{lambda0}
  \lambda_0:=\sup \Big\{\lambda \in \R \mid \H_\lambda^+ \neq \emptyset \Big\}.
\end{equation*}
Moreover, by the strong minimum principle (or Proposition~\ref{H*-lambda-stationary}), any function $u \in
\H_\lambda^+$ never vanishes. The results proved in Section~\ref{sec_funct} for $\H$ and $\H_+$, and $\H_a$ plainly
extend to $\H_\lambda$ and $\H_\lambda^+$. In particular, it follows that $\H_\lambda^+$ is a convex compact set (for a
reference set for $\L_\lambda$ in $\R^N$ one can choose any singleton). Hence any function in $\H_\lambda^+$ can be
represented by the set of all extreme points of $\H_\lambda^+$.
%It follows that if $\mathbb{G}$ is nilpotent, then Theorem~6.8 of \cite{LinPinchover94} applies.

\begin{theorem} \label{H*-lambda-repr}
Let $\ci = \left(\R^N, \cdot, (D_r)_{r>0}\right)$ be a Carnot group, let $\varDelta_{\mathbb{G}}$ be its
sub-Laplacian, and assume that $\L_0  = - \sum_{j=1}^m X_j^2$ agrees with $- \varDelta_{\mathbb{G}}$.
Then $\lambda_0 = 0$, and for any $\lambda \leq 0$,  $u\in \H_\lambda^+$ is an extremal solution if and only if
\begin{equation*}
  u(x) = u_\alpha(x):=\exp \left( \langle x, \alpha \rangle \right)
\end{equation*}
for some vector $\alpha =( \alpha_1, \dots, \alpha_m, 0, \dots, 0)$ such that $\|\alpha\|^2 = - \lambda$. Moreover,
$u\in \H_\lambda^+$ if and only if there exists a unique probability measure $\mu$ on $\mathbb{S}^{m-1}$ such that
$$u(x)=\int_{\xi\in \mathbb{S}^{m-1}} \exp \left( \sqrt{-\lambda}\langle x, \xi \rangle \right) \,\mathrm{d}\mu(\xi).$$
\end{theorem}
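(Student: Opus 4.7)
The plan is to reduce this stationary representation to the parabolic one already proved in Theorem~\ref{H*-lambda-repr-par}, via the simple lifting $u \mapsto v$ defined by $v(x,t) := e^{-\lambda t} u(x)$. A direct computation gives
\begin{equation*}
\partial_t v - \varDelta_{\mathbb{G}} v \,=\, e^{-\lambda t}\bigl(-\lambda u - \varDelta_{\mathbb{G}} u\bigr) \,=\, e^{-\lambda t}\L_\lambda u,
\end{equation*}
so a nonnegative function $u$ solves $\L_\lambda u = 0$ on $\R^N$ if and only if $v \in \H_+$ on $\OT$ with $T = +\infty$. The image of this lifting is exactly the subcone of covariant elements of $\H_+$ satisfying
\begin{equation*}
v(x, t+s) \,=\, e^{-\lambda s}\, v(x,t) \qquad \forall s \in \R.
\end{equation*}

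Next I apply the Choquet representation on the well-capped simplex $\H_+$ constructed in Section~\ref{sec_funct}, together with the classification of extreme rays from Theorem~\ref{H*-lambda-repr-par}: every $v \in \H_+$ admits a unique nonnegative Borel measure $\nu$ on the first layer $\R^m$ such that
\begin{equation*}
v(x,t) \,=\, \int_{\R^m} \exp\bigl(\langle x, \alpha \rangle + |\alpha|^2 t\bigr) \, d\nu(\alpha),
\end{equation*}
where $\alpha = (\alpha_1, \ldots, \alpha_m, 0, \ldots, 0)$. Substituting this representation into the covariance relation and invoking the uniqueness of the Choquet measure produces $e^{|\alpha|^2 s} = e^{-\lambda s}$ for $\nu$-a.e.\ $\alpha$ and every $s>0$, i.e.\ $\nu$ must be supported on the sphere $\bigl\{\alpha : |\alpha|^2 = -\lambda\bigr\}$ of the first layer.

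This support condition is vacuous when $\lambda > 0$, so $\H_\lambda^+ = \emptyset$ in that regime, giving $\lambda_0 \le 0$; combined with $\mathbf{1} \in \H_0^+$ (since $\L_0 \mathbf{1} = 0$), this yields $\lambda_0 = 0$. For $\lambda \le 0$ the change of variables $\alpha = \sqrt{-\lambda}\,\xi$ with $\xi \in \mathbb{S}^{m-1}$ transports $\nu$ to a nonnegative Borel measure $\mu$ on $\mathbb{S}^{m-1}$, yielding
\begin{equation*}
u(x) \,=\, e^{\lambda t} v(x,t) \,=\, \int_{\mathbb{S}^{m-1}} \exp\bigl(\sqrt{-\lambda}\,\langle x, \xi \rangle\bigr) \, d\mu(\xi).
\end{equation*}
Evaluating at $x = 0$ and using the normalization $u(0) = 1$ forces $\mu(\mathbb{S}^{m-1}) = 1$, so $\mu$ is a probability measure; its uniqueness is inherited from that of $\nu$. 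Finally, extremality in $\H_\lambda^+$ corresponds to $\mu = \delta_\xi$ being a Dirac mass, which returns exactly the pure exponentials $u_\alpha(x) = \exp(\langle x, \alpha \rangle)$ with $|\alpha|^2 = -\lambda$.

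The main technical point I foresee is justifying that the Choquet integral representation on $\H_+$ (which is merely well-capped, not globally compact) can be written against a Borel measure on the parameter space $\R^m$ via the continuous bijection $\alpha \mapsto v_\alpha$, and that uniqueness of the representing measure still applies in this noncompact setting. This is standard once the simplex structure of each cap $\H_a$ established in Section~\ref{sec_funct} is fixed, but it is the place where the parabolic classification of extremals is actually leveraged to pin down the stationary ones.
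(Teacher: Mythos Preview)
Your proposal is correct and follows essentially the same route as the paper: lift $u$ to $v(x,t)=e^{-\lambda t}u(x)$, invoke Theorem~\ref{H*-lambda-repr-par} to classify the parabolic extremals, and finish with Choquet. The only cosmetic difference is that the paper packages the correspondence between stationary and parabolic extremals as a citation to \cite[Theorem~2.1]{Pinchover88} (see also Remark~\ref{r-separation}), whereas you unpack that step explicitly via the covariance relation together with uniqueness of the representing measure on the simplex~$\H_a$.
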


\begin{proof} It is a direct consequence of Theorem \ref{H*-lambda-repr-par} and Choquet's theorem. Recall that as in
\cite[Theorem~2.1]{Pinchover88}, if the separation principle of the form \eqref{ex-sep} holds true, then $u_\lambda$ is
an extremal solution of $\L_\lambda v = 0$ in $\R^N$ if and only if the function $u(x,t) := e^{\lambda t} u_\lambda(x)$
is a nonzero extremal solution of $\L w = 0$ in $\R^{N+1}$ (see also, Remark~\eqref{r-separation}). The conclusion
immediately follows from Theorem \ref{H*-lambda-repr-par}.
\end{proof}

As a result we obtain the following nonnegative Liouville theorem.

\begin{corollary}\label{cor-Liouville}
If $u \in \H_0^+$ and $ - \L_0$ is a sub-Laplacian $\varDelta_{\mathbb{G}}$ on a Carnot group $\mathbb{G}$,
then $u =\mathbf{1}$, where $\mathbf{1}$ is the constant function taking the value $1$ in $\R^N$.
\end{corollary}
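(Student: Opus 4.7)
The plan is to apply Theorem~\ref{H*-lambda-repr} with $\lambda=0$ and read off the conclusion. Since $\lambda_0 = 0$ according to that theorem, the cone $\H_0^+$ is nonempty (the constant function $\mathbf{1}$ obviously lies in it), and the integral representation available there is valid.

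First, I would identify the extremal rays of $\H_0^+$. By Theorem~\ref{H*-lambda-repr} with $\lambda=0$, a function $u \in \H_0^+$ is extremal if and only if $u(x) = u_\alpha(x) = \exp(\langle x, \alpha\rangle)$ for some vector $\alpha = (\alpha_1, \dots, \alpha_m, 0, \dots, 0)$ satisfying $\|\alpha\|^2 = -\lambda = 0$. The only such vector is $\alpha = 0$, so the unique extremal element of $\H_0^+$ is the constant function $\mathbf{1}$.

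Second, I would invoke the representation part of Theorem~\ref{H*-lambda-repr}: every $u \in \H_0^+$ can be written as
\begin{equation*}
  u(x) = \int_{\mathbb{S}^{m-1}} \exp\left( \sqrt{-\lambda}\, \langle x, \xi\rangle \right) \mathrm{d}\mu(\xi)
\end{equation*}
for some probability measure $\mu$ on $\mathbb{S}^{m-1}$. Setting $\lambda=0$, the integrand is identically $1$, and the total mass of $\mu$ is $1$, so $u(x) = 1$ for every $x \in \R^N$. Thus $u = \mathbf{1}$, as desired.

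There is essentially no obstacle here: the corollary is a direct specialization of the Choquet-type integral representation of Theorem~\ref{H*-lambda-repr} to the degenerate case $\lambda = 0$, where the family of extremal exponentials collapses to the single constant solution. The real content lies upstream, in the separation principle and the identification of the extremal rays carried out in Theorem~\ref{H*-lambda-repr-par}; once those are available, the Liouville statement is immediate.
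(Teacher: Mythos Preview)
Your proof is correct and matches the paper's approach: the corollary is stated there as an immediate consequence of Theorem~\ref{H*-lambda-repr}, with no separate argument given. Specializing the representation to $\lambda=0$ forces $\alpha=0$ and collapses the integral to the constant $\mathbf{1}$, exactly as you wrote.
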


\begin{remark}
  If $\L_0 = - \sum_{jk}^m a_{jk} X_j X_k$ for some symmetric positive definite constant matrix $A = \left( a_{jk}
\right)_{j,k=1, m}$, then the result of Theorem~\ref{H*-lambda-repr} clearly applies with
\begin{equation*}
  u(x) = u_{A;\alpha}(x)=\exp \left( \langle A^{-1} x, \alpha \rangle \right),
\end{equation*}
with $\alpha=(\alpha_1, \dots, \alpha_m, 0, \dots, 0)\in \R^N$ such that $\langle A^{-1} \alpha, \alpha \rangle = -
\lambda$.
\end{remark}

\mysection{Parabolic Liouville theorems}\label{ssec_Liouville}
In the present section we assume that $\L$ is a hypoelliptic operator of the form
\begin{equation} \label{e11n}
  \L := \partial_t  + \L_0, \qquad \L_0 := - \sum_{j=1}^m X_j^2
\end{equation}
satisfying {\rm (H0')} and {\rm (H1')}. In particular, $\L$ is of the form \eqref{e1} with
$X_0=0$.

We say that $ \L_0$ satisfies the \emph{nonnegative Liouville property} if any nonnegative solution of $\L_0 u=0$ in
$\R^N$ is equal to a constant. Recall that
\begin{equation*} %\label{lambda0}
  \lambda_0:=\sup \Big\{\lambda \in \R \mid \exists u_\lambda\gneqq 0\;
\mbox{ s.t. }\left(\L_0 - \lambda \right)u_\lambda =0 \mbox{ in } \R^N \Big\}
\end{equation*}
denotes the generalized principal eigenvalue of the operator $\L_0$.

We assume that
\begin{description}
  \item[{\it a)}] $\L_0$ satisfies the nonnegative Liouville property,
  \item[{\it b)}] $\lambda_0=0$.
\end{description}
We note that the nonnegative Liouville property clearly implies the Liouville property for {\em bounded} solutions: any
bounded solution of $\L_0 u=0$ in $\R^N$ is equal to a constant.

Properties {\it (a)-(b)} hold whenever $\mathbb{G}$ is nilpotent and $\L_0=\L_0^*$ (see \cite{LinPinchover94} for a
similar statement), and in particular, under the assumptions of Theorem~\ref{H*-lambda-repr} (see the aforementioned
theorem and Corollary~\ref{cor-Liouville}, see also \cite{KogojLanconelli07}).
Property {\it (a)} also holds when all the $X_j$'s are homogeneous of degree 1 with respect to a
dilation group.  It is also true for a wide class of operators including Grushin-type operators
\begin{equation*}
  \L_{0} = - \partial_x^2 - x^{2 \alpha} \partial_y^2\,,
\end{equation*}
where $\alpha$ is any positive constant (see \cite{KogojLanconelli09}). Property {\it (b)} is well studied in
the nondegenerate case, and our Theorem \ref{H*-lambda-repr} is a first result for degenerate operators. We aim to
study this property under more general assumptions in a forthcoming work.

Since $X_0=0$, Theorem~\ref{th-main} implies that a nonzero  $u\in \mathrm{exr}\,\H^+$ if and only if it satisfies
the separation principle, namely, $$u(x,t)=e^{-\lambda t} \varphi_\lambda( x),$$
where $\varphi_\lambda$ is an extreme positive solution of the equation $\big(\sum_{j=1}^m X_j^2 +\lambda\big) u =0$ in
$\R^N$, and $\lambda\le \lambda_0=0$. Consequently, the following nonnegative Liouville theorem holds for $\L$ in
$\R^{N}\times \R$.

\begin{theorem}\label{thm_end}
Assume that $\L_0$ satisfies the nonnegative Liouville property and that $\lambda_0=0$.
Let $u\geq 0$ be a solution of the equation
$$
 \left(\partial_t + \L_0 \right) u = \partial_t u - \sum_{j=1}^m X_j^2 u = 0
 \qquad \text{in} \quad \R^{N}\times \R
$$
such that
$$
  u(0,t)=O(\mathrm{e}^{\varepsilon t})\qquad  \mbox{ as } t\to\infty,
$$
for any $\varepsilon>0$. Then $u=\mathrm{constant}$.
\end{theorem}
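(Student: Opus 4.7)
The plan is to apply Choquet's integral representation together with the separation principle of Theorem~\ref{th-main} (as spelled out in Remark~\ref{r-separation}) to reduce the parabolic nonnegative Liouville statement in $\R^N\times\R$ to the stationary one, where assumption (a) applies.

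First, since $X_0=0$ and $\lambda_0=0$, every nontrivial $w\in \exr \H_+$ has, by the separation principle, the form $w(x,t) = \mathrm{e}^{-\lambda t}\varphi_\lambda(x)$ for some $\lambda\leq 0$ and some positive extremal solution $\varphi_\lambda$ of $(\L_0-\lambda)\varphi=0$ in $\R^N$. Each such $\varphi_\lambda$ is strictly positive by Proposition~\ref{H*-lambda-stationary}, so it can be normalized by $\varphi_\lambda(0)=1$. Let $\mathcal{E}$ denote the set of such normalized extremals, equipped with the projection $w\mapsto \lambda(w)$. Since $\H_+$ is a well-capped simplex (Section~\ref{sec_funct}), Choquet's theorem applied to any cap containing $u$ provides a finite positive Borel measure $\nu$ on $\mathcal{E}$ such that
\begin{equation*}
  u(x,t)=\int_{\mathcal{E}} \mathrm{e}^{-\lambda(w) t}\varphi_w(x)\,d\nu(w).
\end{equation*}

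Next, I would exploit the growth hypothesis by testing this representation at $x=0$. Let $\tilde\mu$ be the pushforward of $\nu$ by $w\mapsto \lambda(w)$; it is a finite positive Borel measure on $(-\infty,0]$, and
\begin{equation*}
  u(0,t)=\int_{(-\infty,0]} \mathrm{e}^{-\lambda t}\,d\tilde\mu(\lambda).
\end{equation*}
The assumption $u(0,t)\leq C_\varepsilon \mathrm{e}^{\varepsilon t}$ for large $t$ then gives
\begin{equation*}
  \int_{(-\infty,0]} \mathrm{e}^{(-\lambda-\varepsilon) t}\,d\tilde\mu(\lambda)\leq C_\varepsilon.
\end{equation*}
Letting $t\to\infty$ and applying Fatou's lemma forces $\tilde\mu\bigl((-\infty,-\varepsilon)\bigr)=0$ for every $\varepsilon>0$. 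Therefore $\tilde\mu$ is concentrated at $\lambda=0$, and hence $\nu$ is concentrated on $\{w\in \mathcal{E}: \lambda(w)=0\}$.

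Consequently, the representation collapses to a sum of time-independent extremals, so that $u(x,t)=v(x)$ for some $v\geq 0$ that solves $\L_0 v=0$ in $\R^N$. By assumption (a), the nonnegative Liouville property for $\L_0$, $v$ is constant, and thus so is $u$.

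The main technical point is the measure-theoretic bookkeeping of Choquet's representation: one must realize $\mathcal{E}$ as a Borel space on which the assignment $w\mapsto \lambda(w)$ is measurable, so that the pushforward $\tilde\mu$ is well-defined. The rest is a standard Laplace-transform-style argument separating $\lambda<0$ from $\lambda=0$, combined with the hypothesis (a) on the stationary operator $\L_0$.
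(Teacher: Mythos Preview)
Your proof is correct and follows precisely the route the paper indicates: the paper does not spell out a detailed proof of Theorem~\ref{thm_end} but states that it is a consequence of the separation principle $u(x,t)=e^{-\lambda t}\varphi_\lambda(x)$ for extremals (with $\lambda\le\lambda_0=0$), together with the Choquet representation, the growth hypothesis, and assumption (a). Your write-up supplies exactly these details --- the pushforward to a measure on $(-\infty,0]$, the Fatou argument forcing $\tilde\mu$ to concentrate at $\lambda=0$, and the final appeal to the stationary Liouville property --- and is the intended argument.
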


This result should be compared with the Liouville theorems proved by Kogoj and Lanconelli in
\cite{KogojLanconelli05,KogojLanconelli06,KogojLanconelli07},  where it was assumed that the operator $\L$ is of the
form \eqref{e1}, $\L$  is not necessarily translation invariant, but it is invariant with respect to a dilation group
$\left( \d_r \right)_{r > 0}$, and satisfies an \emph{oriented connectivity condition} that is, (using  our notation)
\begin{equation}
  \A_{(x_0,t_0)} = \R^N \times ]-\infty, t_0[, \qquad \text{for every} \quad (x_0,t_0) \in \OT.
\end{equation}
In this case,  a (stronger) sufficient growth condition for the validity of the above Liouville theorem is
$$u(0,t)=O(t^m)\qquad  \mbox{ as } t\to\infty, \mbox{ for some } m>0.$$
In particular, in this case, the nonnegative Liouville theorem holds true for the stationary equation (without any
growth condition, see \cite[Corollary 1.2]{KogojLanconelli05}).

\mysection{Positive Cauchy Problem} \label{sec_Cauchy}

In the present section we consider the positive Cauchy problem for $\L$ in $S_T := \R^N \times\, ]0,T[$ with $0 < T \le
+
\infty$, where $\L$ is of the form \eqref{e11n}. Our aim is to prove the following uniqueness result for the positive
Cauchy problem under the assumption that $X_0=0$.
\begin{theorem} \label{th-main-2}
{Let $\L$ be an operator of the form \eqref{e11n}, satisfying {\rm (H0')} and {\rm (H1')}, and let
$u_0 \geq 0$ be a continuous function in $\R^N$. Then the positive Cauchy problem
\begin{equation} \label{p-Cauchy-X0}
\begin{cases}
  \p_t u = \sum_{j=1}^m X_j^2 u & \quad (x,t) \in  S_T, \\
  u ( x, 0) = u_0(x)\geq 0 & \quad x \in \R^N,\\
  u (x,t)\geq 0 & \quad (x,t) \in S_T,
\end{cases}
\end{equation}
admits at most one solution.}
\end{theorem}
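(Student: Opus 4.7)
I would prove Theorem~\ref{th-main-2} by reducing the uniqueness of the positive Cauchy problem to the uniqueness of the Choquet representation on $\exr\H_+$. Since $X_0 = 0$, Chow--Rashevskii's theorem (see \eqref{eq-Prop}) ensures that hypothesis (H2) holds for every $\omega \in \R^m$; with $\omega = 0$ one has $\exp(s(\omega\cdot X + Y))(x,t) = (x, t-s)$, so the separation principle (Theorem~\ref{th-main}) applies. As pointed out in Remark~\ref{r-separation}, every nonzero $\tilde u \in \exr \H_+$ is therefore of the form $\tilde u(x,t) = e^{-\lambda t}\, u_\lambda(x)$ for some $\lambda \le \lambda_0$ and some extremal element $u_\lambda \in \H_\lambda^+$.

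Now let $u_1, u_2$ be two nonnegative solutions of \eqref{p-Cauchy-X0} sharing the continuous initial datum $u_0$. For each small $\varepsilon > 0$, set $u_i^\varepsilon(x,t) := u_i(x, t+\varepsilon)$; this is a nonnegative $\L$-solution whose domain strictly contains $\R^N \times [0, T-\varepsilon[$. By restricting to a suitable strip and invoking the Choquet representation provided by the cap/simplex structure of Section~\ref{sec_funct}, one obtains a unique Radon measure $\nu_i^\varepsilon$ carried by $\exr\H_+$ such that
\begin{equation*}
  u_i^\varepsilon(x,t) \;=\; \int e^{-\lambda t}\, u_\lambda(x)\, d\nu_i^\varepsilon(\lambda, u_\lambda).
\end{equation*}
Evaluating this identity at $t = 0$ and using continuity of $u_0$ yields
\begin{equation*}
  \lim_{\varepsilon\to 0^+}\int u_\lambda(x)\, d\nu_i^\varepsilon(\lambda,u_\lambda) \;=\; u_0(x), \qquad i = 1, 2.
\end{equation*}

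The core of the argument, and the main obstacle, is to show that this common initial trace pins down a unique limit measure $\nu$: the ``initial-trace'' map $\nu \mapsto \int u_\lambda(\cdot)\, d\nu$ must be injective on positive Radon measures supported on extremal elliptic eigenfunctions. This is the degenerate-hypoelliptic analog of Widder's classical uniqueness theorem for the Poisson integral representation of nonnegative caloric functions. I would expect to prove it by invoking the strong minimum principle (Proposition~\ref{H*-lambda-stationary}) to separate distinct extremal $u_\lambda$'s, combined with the Bauer-simplex uniqueness of Choquet representations inside each cap $\H_a$. Once this injectivity is in place, $\nu_1^\varepsilon$ and $\nu_2^\varepsilon$ share a common weak limit, whence $u_1^\varepsilon \equiv u_2^\varepsilon$ on the relevant strip; letting $\varepsilon \to 0^+$ gives $u_1 \equiv u_2$ on $S_T$.
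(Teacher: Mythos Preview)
Your proposal has a genuine gap, and it diverges from the paper's argument in a way that creates real difficulties rather than avoiding them.

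First, the Choquet machinery and the separation principle in this paper are developed for the cone $\H_+$ of nonnegative solutions on $\OT = \R^N \times\, ]\!-\infty, T[$. Your shifted functions $u_i^\varepsilon$ are solutions only on the \emph{bounded-below} strip $\R^N \times\, ]\!-\varepsilon, T-\varepsilon[$. On such a strip the extremal structure is different: already for the classical heat equation there are extremal nonnegative solutions on a finite strip that are \emph{not} of the separated form $e^{-\lambda t}u_\lambda(x)$ (they concentrate mass as $t$ approaches the lower boundary). So your displayed representation of $u_i^\varepsilon$ as an integral over separated extremals is not justified, and Theorem~\ref{th-main} does not apply to this domain.

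Second, even granting the representation, the step you flag as ``the main obstacle'' is in fact the entire content of the theorem. You only know that $u_i^\varepsilon(x,0)=u_i(x,\varepsilon)\to u_0(x)$ for both $i$; nothing prevents the two families $\nu_1^\varepsilon$ and $\nu_2^\varepsilon$ from differing for every $\varepsilon>0$ while their traces share the same limit. The hoped-for injectivity of the ``initial-trace'' map, combined with some compactness of the $\nu_i^\varepsilon$, is precisely a Widder-type uniqueness statement, and you give no argument for it beyond invoking the simplex property (which gives uniqueness of the representing measure for a \emph{given} solution, not uniqueness of the solution for a given trace).

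The paper's proof sidesteps both issues. It first reduces to $u_0\equiv 0$ via Donnelly's minimal-solution argument (Proposition~\ref{prop_Donn} and Corollary~\ref{cor_uniquenesspositive}). Then, because the initial datum is zero, the solution can be extended by $0$ to all of $\OT$; hypoellipticity makes this extension a genuine smooth solution $\tilde u\in\H_+$. Now the separation principle on $\OT$ applies and shows every nonzero extremal is strictly positive on $\R^N\times\R^-$. Since $\tilde u$ vanishes there, its Choquet representing measure must be concentrated at $0$, hence $\tilde u\equiv 0$. The reduction to zero data and the extension-by-zero are exactly the missing ideas in your outline.
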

We note that the first uniqueness result for the positive Cauchy problem was established by Widder for the classical
heat equation in the Euclidean space \cite{Widder}.

The proof of Theorem~\ref{th-main-2} relies on Theorem~\ref{th-main} which, under the additional assumption $X_0=0$,
asserts that every nonnegative extremal solution $u$ of $\L u = 0$ in $\OT$ satisfies
\begin{equation*} %\label{eq_funct_eq-nodrift}
    u(x,t) = \mathrm{e}^{-\lambda t} u_0(x) \qquad \forall (x,t) \in \OT,
\end{equation*}
where $\lambda \leq \lambda_0$, and $\lambda_0$ is the generalized principal eigenvalue (see Remark~\ref{r-separation}).

\medskip

Before giving the proof of Theorem~\ref{th-main-2}, we should compare it with a result of Chiara Cinti
\cite{Cinti09} who considered a class of left translation invariant hypoelliptic operators with nontrivial drift
$X_0$ under the additional hypothesis that the operator is {\em homogeneous} with respect to a group of dilations on
the underlying Lie group. The method used in \cite{Cinti09} relies on some accurate upper and lower bounds of the
fundamental solution of $\L$. We note that the lower bounds for the fundamental solution are usually obtained by
constructing suitable \emph{Harnack chains}, as the ones used in the proof of Theorem~\ref{th-main}.
On the other hand, in order to apply the method used in \cite{Cinti09},  the upper and lower bounds need
to agree asymptotically. Hence, the Harnack chains need to be chosen in some optimal way. An advantage of our method is
that it does not require such an optimization step. Actually, a priori bounds of the fundamental solution, and even its
existence are not needed. We also note that the bibliography of \cite{Cinti09} contains an extensive discussion of
known results on the uniqueness of the Cauchy problem. We also recall a recent result by Bumsik Kim \cite{Kim} for the heat equation associated with subelliptic diffusion operators. In  his work, Kim proves uniqueness results for the heat equation under curvature bounds through the generalized curvature-dimension criterion developed by Baudoin and Garofalo and thus without the Lie group assumption.

\medskip

We start the proof of Theorem~\ref{th-main-2} with some preliminary results that do not require the assumption $X_0=0$.

Consider the positive Cauchy problem
\begin{equation} \label{p-Cauchy}
\begin{cases}
  \L u(x,t) = 0 & \quad (x,t) \in  S_T, \\
  u ( x, 0) = u_0(x) & \quad x \in \R^N,\\
  u (x,t)\geq 0 & \quad (x,t) \in S_T,
\end{cases}
\end{equation}
with $u_0\geq 0$ continuous function in $\R^N$.

We first recall some basic results on hypoelliptic operators of the form \eqref{e1}. Usually, hypoelliptic
operators have been studied under the further assumption that it is \emph{non-totally degenerate}, namely,
there exists a vector $\nu \in \R^N$ and  $j \in \big\{1, \dots, m\big\}$ such that
\begin{equation} \label{e-Bony}
  \langle X_j(x), \nu \rangle \ne 0\qquad  \mbox{ for all } x \in \R^N.
\end{equation}
This condition was introduced by Bony in \cite{Bony} and is not very restrictive. We also refer to
\cite{BonfiglioliLanconelli-2012} for a weaker version of this condition.

We observe that \eqref{e-Bony} can be always satisfied by a simple \emph{lifting} procedure. Indeed, let $\L$ be of the
form \eqref{e1}, and consider the operator $\widetilde \L$ acting on $(x_0, x, t) \in \R^{N+2}$ and defined by
\begin{eqnarray*}
     \widetilde \L u : = - \partial_{x_0}^2 u + \L u = \p_t u - \partial_{x_0}^2 u - \sum_{j=1}^m X_j^2 u + X_0 u.
\end{eqnarray*}
Clearly, $\widetilde \L$ is non-totally degenerate with respect to $\nu=(1,0,\ldots,0)\in\R^{N+1}$. Moreover, $\widetilde \L$ is
hypoelliptic and satisfies (H1) and (H2) if $\L$ is hypoelliptic and satisfies (H1) and (H2). Our uniqueness result for
$\L$ readily follows from the uniqueness for $\widetilde \L$. Therefore, in the sequel we assume that $\L$ satisfies \eqref{e-Bony}.

\medskip

We recall Bony's {\em strong maximum principle} \cite[Th\'eor\`{e}me 3.2]{Bony} for hypoelliptic operators
$\L$ of the form \eqref{e1} that satisfy  \eqref{e-Bony}. With our notation, it reads as follows. \emph{Let
$\Omega$ be any open subset of $\R^{N+1}$ and let $u \in C^{2}(\Omega)$ be such that $\L u
\ge 0$ in $\Omega$. Let $z_0 \in \Omega$ be such that $u(z_0) = \max_{\O}u$. If $\gamma: [0,T_0] \to \Omega$ is an
$\L$--admissible path such that $\g(0) = z_0$, then $u(\g(s)) = u(z_0)$ for every $s \in [0,T_0]$.}

The following {\em weak maximum principle} can be obtained as a consequence of Bony's strong maximum
principle. \emph{Let $\Omega$ be any bounded open set of $\R^{N+1}$ and let $u \in C^{2}(\Omega)$ be such that $\L u
\leq 0$ in $\Omega$. If $\limsup_{\underset{z\in \Omega}{z \to w}} u (z) \le 0$ for every $w \in \partial \Omega$, then
$u \le 0$ in $\Omega$.}

Let $\Omega$ be any bounded open set of $\R^{N+1}$, and let $\varphi \in C(\partial \O)$. The axiomatic potential
theory provides us with the Perron solution $u_\varphi$ of the boundary value problem $\L u = 0$ in $\O$, $u = \varphi$
in $\partial \O$. It is known that $u_\varphi$ might attain the prescribed boundary data only in a subset of $\partial
\O$. We say that $w \in \partial \O$ is \emph{regular} for $\L$ if $\lim_{\underset{z\in \Omega}{z \to w}} u_\varphi (z)
\to \varphi(w)$ for every $\varphi \in C(\partial \O)$. we denote by $\partial_r(\Omega)$ the set of the regular points
of $\partial \O$
\begin{equation*}
  \partial_r(\Omega) := \big\{ w \in \partial \O \mid \lim_{\underset{z\in \Omega}{z \to w}} u_\varphi (z) \to
\varphi(w)  \
\text{ for every }  \varphi \in C(\partial \O) \big\}.
\end{equation*}

Under assumption \eqref{e-Bony} it is possible to construct a family of \emph{regular cylinders} of $\R^{N+1}$, that is
cylinders such that their regular boundary agree with their \emph{parabolic boundary} \cite{LanconelliPascucci}.
Specifically, we denote by
$B(x,r)$ the Euclidean ball centered at $x \in \R^N$ with radius $r$. Let $\nu$ be a vector satisfying \eqref{e-Bony},
and assume, as it is not restrictive, that $|\nu|=1$. For every $x \in \R^N$ and $k \in \N$ we set
\begin{equation*}
  B_k(x) := B(x + k \nu, 2k) \cap B(x - k \nu, 2k).
\end{equation*}
It turns out that for every $x\in \R^N$, $k\in \N$, and $0<T_0<\infty$, the cylinder $Q_{k,T_0} (x) := B_k(x) \times
\,]0,T_0[$ is regular, see \cite{LanconelliPascucci} for a detailed proof of this statement.

We note that the sequence of regular cylinders  $\left( Q_{k,T_0} (0)\right)_{\underset{k \in \N}{0<T_0<T}}$ exhausts
the set $S_T$. This property will be used in the sequel.

\medskip

Consider a regular cylinder $Q := B \times\, ]0,T_0[$ and a function $f \in C(\overline Q)$. In \cite[Theorem
2.5]{LanconelliPascucci} it is proved that for a hypoelliptic operator $\L$ of the form \eqref{e1} satisfying
\eqref{e-Bony} there exists a unique solution $u \in C^{\infty}(Q) \cap C(Q\cup \partial_r(Q))$ to the following
initial-boundary value problem
\begin{equation} \label{p-Dirichlet}
\begin{cases}
\L u= f &  \quad \text{in} \  Q, \\[2mm]
u =  0 &  \quad \text{in} \ \partial_r Q.
\end{cases}
\end{equation}
We next show that the same result holds when a continuous compactly supported initial condition is prescribed on the
bottom of $Q$.

\begin{lemma} \label{lem-bvp}
Let $\L$ be a hypoelliptic operator of the form \eqref{e1} satisfying \eqref{e-Bony}, and let $Q := B \,\times\,
]0,T_0[$ be a regular cylinder. Let $\varphi \in C(B)$ be such that \emph{supp}$(\varphi) \subset B$. Then
there exists a unique $u \in C^{\infty}(Q) \cap C(Q\cup \partial_r Q)$ to the following initial-boundary value
problem
\begin{equation} \label{p-Dirichlet-c}
\begin{cases}
\L u= 0 &  \quad \text{in} \  Q, \\
u (x,t) =  0 &  \quad \text{in} \ \partial B \times [0,T_0], \\
u (x,0) = \varphi(x) &  \quad \text{in} \ B \times \big\{ 0 \big\}.
\end{cases}
\end{equation}
\end{lemma}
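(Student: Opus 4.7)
The plan is to reduce the initial-boundary value problem \eqref{p-Dirichlet-c} to the inhomogeneous Dirichlet problem \eqref{p-Dirichlet} already solved in \cite[Theorem 2.5]{LanconelliPascucci} by subtracting a smooth extension of $\varphi$, and then to derive uniqueness from Bony's strong maximum principle. For existence, I first approximate $\varphi$ uniformly on $\overline B$ by a sequence $(\varphi_n)\subset C_c^\infty(B)$ — possible since supp$(\varphi)\subset B$ compactly. Fix a cutoff $\chi\in C_c^\infty([0,T_0))$ with $\chi(0)=1$ and set $\Phi_n(x,t):=\varphi_n(x)\chi(t)\in C_c^\infty(\R^{N+1})$. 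Then $\Phi_n(\cdot,0)=\varphi_n$, $\Phi_n$ vanishes on $\partial B\times[0,T_0]$ and near $B\times\{T_0\}$, and $f_n:=-\L\Phi_n\in C(\overline Q)$. Applying \cite[Theorem 2.5]{LanconelliPascucci} yields a unique $v_n\in C^\infty(Q)\cap C(Q\cup\partial_r Q)$ with $\L v_n=f_n$ in $Q$ and $v_n=0$ on $\partial_r Q$. The candidate $u_n:=v_n+\Phi_n$ then lies in $C^\infty(Q)\cap C(Q\cup\partial_r Q)$, solves $\L u_n=0$, equals $\varphi_n$ on $B\times\{0\}$, and vanishes on $\partial B\times[0,T_0]$.

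To pass to the limit $n\to\infty$, I would use the inequality $\|u_n-u_m\|_{L^\infty(Q)}\le\|\varphi_n-\varphi_m\|_{L^\infty(B)}$ supplied by Bony's strong maximum principle applied to $\pm(u_n-u_m)$: since this difference is $\L$-harmonic and vanishes on $\partial_r Q$, any interior extremum would propagate along an $\L$-admissible path, which — because the $t$-coordinate strictly decreases along paths driven by $Y=-\partial_t$ — must reach $\partial_r Q$, giving a contradiction unless the extremum is zero. Hence $(u_n)$ is Cauchy in $L^\infty(Q)$, its uniform limit $u$ has the prescribed boundary values on $\partial_r Q$, and hypoellipticity of $\L$ together with standard local interior estimates upgrade $u$ to $C^\infty(Q)$ solving $\L u=0$ classically. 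Uniqueness is just the same strong-maximum-principle argument applied to $w:=u_1-u_2$ of two hypothetical solutions.

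The main technical obstacle is that $B\times\{T_0\}$ is not in $\partial_r Q$, so a priori nothing controls $u_n$ near the top, and the $L^\infty$-bound above is not automatic. This is precisely the reason to take $\chi$ with $\chi\equiv 0$ in a neighborhood of $T_0$: then $\Phi_n$ is bounded and vanishes near the top, so that $u_n=v_n+\Phi_n$ is bounded on $\overline Q$; applying the weak maximum principle on the closed sub-cylinder $\overline B\times[0,T_0-\varepsilon]$ and letting $\varepsilon\to 0^+$ justifies the estimate used above. The same remark rules out escape of the supremum in the uniqueness argument. All the other ingredients — existence for the right-hand-side problem, strong and weak maximum principles, and hypoelliptic regularity — are already recalled in the text.
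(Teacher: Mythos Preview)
Your proposal is correct and follows essentially the same route as the paper: approximate $\varphi$ by smooth compactly supported data, extend to a smooth function on $\overline Q$ vanishing on the lateral boundary, reduce to the inhomogeneous Dirichlet problem of \cite[Theorem~2.5]{LanconelliPascucci}, and pass to the limit via the maximum principle, with uniqueness from the same principle. Your extra care with the time cutoff $\chi$ and the sub-cylinder argument to handle the non-regular top $B\times\{T_0\}$ is a useful refinement that the paper's proof leaves implicit.
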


\begin{proof}
We use a standard argument. Consider, for any positive $\e$, a function $w_\e \in C^\infty \left(\overline{Q}\right)$
such that $w_\e(\cdot, 0) \to \varphi$, uniformly as $\e \to 0$, and takes the zero boundary condition at the lateral
boundary of $Q$. Denote by $f_\e := \L w_\e$, and note that $f_\e$ is continuous on $\overline Q$. We recall that we
can solve uniquely the initial-boundary value problem of the form \eqref{p-Dirichlet}. So, let $v_\e$ be the unique
solution of
the following problem
\begin{equation*}
\begin{cases}
\L v_\e= f_\e &  \quad \text{in} \  Q, \\
v_\e =  0 &  \quad \text{in} \ \partial_r Q.
\end{cases}
\end{equation*}
The function $u_\e : = w_\e - v_\e$ is clearly the unique solution of
\begin{equation*}
\begin{cases}
\L u_\e= 0 &  \quad \text{in} \  Q, \\
u_\e (x,t) =  0 &  \quad \text{in} \ \partial B \times [0,T_0], \\
u_\e (x,0) = w_\e (x,0) &  \quad \text{in} \ B \times \big\{ 0 \big\}.
\end{cases}
\end{equation*}
By the maximum principle, $u_\e$ uniformly converges to a continuous function $u$ that is a classical
solution of \eqref{p-Dirichlet-c}. The uniqueness follows from the weak maximum principle.
\end{proof}

Next, we apply the well-known argument (introduced by Donnelly for nondegenerate parabolic equations \cite{Donnelly}) to
show that the uniqueness for the positive Cauchy problem is equivalent to the uniqueness of the positive Cauchy problem
with the {\em zero} initial condition. For this sake, we prove the following proposition, which clearly implies the above
equivalence.

\begin{proposition}\label{prop_Donn} Let $\L$ be a hypoelliptic operator of the form \eqref{e1}, satisfying
\eqref{e-Bony}. If $u \in C(\overline {S_T}) \cap C^\infty(S_T)$ is a solution of the positive Cauchy problem
\eqref{p-Cauchy}, then there exists a minimal nonnegative solution $\widetilde u$ of \eqref{p-Cauchy}. Namely, $0 \le
\widetilde u \le v$ in $S_T$ for any solution $v$ of \eqref{p-Cauchy}.
\end{proposition}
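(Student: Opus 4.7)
The plan is to construct $\widetilde u$ as the monotone limit of solutions of approximate initial--boundary value problems posed on an exhaustion of $S_T$ by regular cylinders, in the classical spirit of Donnelly's argument. First, using the family $\{B_k(0)\}$ built from \eqref{e-Bony}, I would select sequences $k_n \nearrow \infty$ and $T_n \nearrow T$ and set $Q_n := B_{k_n}(0) \times \,]0,T_n[$, so that each $Q_n$ is a regular cylinder, $Q_n \subset Q_{n+1}$ and $\bigcup_n Q_n = S_T$. I would also fix a nested sequence of cutoffs $\chi_n \in C(\R^N)$ with $\mathrm{supp}\,\chi_n \Subset B_{k_n}(0)$, $0 \le \chi_n \le \chi_{n+1} \le 1$, and $\chi_n \to 1$ pointwise. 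Applying Lemma~\ref{lem-bvp} with $\varphi_n := \chi_n u_0$ then produces a unique $u_n \in C^\infty(Q_n) \cap C(Q_n \cup \partial_r Q_n)$ solving $\L u_n = 0$ in $Q_n$ with zero lateral data and $u_n(\cdot,0) = \varphi_n$ on $B_{k_n}(0)$.

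The second step would be a double comparison via the weak maximum principle (valid under \eqref{e-Bony}) applied on each $\partial_r Q_n$. Since $u_{n+1} \ge 0 = u_n$ on the lateral part of $\partial_r Q_n$ and $\chi_{n+1} u_0 \ge \chi_n u_0$ on its bottom, the sequence $(u_n)$ is monotone nondecreasing on $Q_n$; analogously, any nonnegative solution $v$ of \eqref{p-Cauchy} satisfies $v \ge 0 = u_n$ laterally and $v(\cdot,0) = u_0 \ge \chi_n u_0$ on the bottom, so $u_n \le v$ in $Q_n$. Setting
\[
   \widetilde u(x,t) := \lim_{n\to\infty} u_n(x,t),
\]
I would then conclude that $\widetilde u$ is well defined on $S_T$ and satisfies $0 \le \widetilde u \le v$ for every nonnegative solution $v$ of \eqref{p-Cauchy} (in particular $\widetilde u \le u < \infty$), so that minimality is automatic.

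It remains to upgrade this pointwise limit to a classical solution and to verify the initial trace. The sandwich $u_n \le u$ gives uniform bounds on compact subsets of $S_T$; invoking hypoellipticity of $\L$ together with the local Harnack inequality (H*) and the associated interior estimates, I would promote this to convergence in $C^\infty_{\mathrm{loc}}(S_T)$, so that $\L \widetilde u = 0$ classically. For the initial condition at an arbitrary $x_0 \in \R^N$, the bound $\widetilde u \le u$ combined with continuity of $u$ at $(x_0,0)$ immediately yields $\limsup_{(x,t)\to(x_0,0)} \widetilde u(x,t) \le u_0(x_0)$; for the lower bound, I would choose $n$ large enough that $\chi_n \equiv 1$ on a neighborhood of $x_0$, so that $u_n(\cdot,0) = u_0$ there, and then exploit $u_n \in C(Q_n \cup \partial_r Q_n)$ to obtain $\liminf_{(x,t)\to(x_0,0)} \widetilde u(x,t) \ge \lim_{(x,t)\to(x_0,0)} u_n(x,t) = u_0(x_0)$. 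The main obstacle is precisely this last point: one has to rely on the continuity of the approximants $u_n$ up to the parabolic boundary, as guaranteed by Lemma~\ref{lem-bvp}, in order to turn monotone pointwise convergence into the genuine continuous matching of the initial datum; the remaining ingredients are just the weak maximum principle, monotone convergence, and hypoellipticity.
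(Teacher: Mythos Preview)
Your proof is correct and follows essentially the same approach as the paper's: exhaust $S_T$ by regular cylinders, solve the approximate initial--boundary value problems from Lemma~\ref{lem-bvp} with cut-off initial data, pass to the monotone limit using the weak maximum principle, and invoke hypoellipticity plus a sandwich argument for the initial trace. The only point to tighten is that your cutoffs should be built so that $\chi_n \equiv 1$ on expanding sets (not merely $\chi_n \to 1$ pointwise), since you implicitly use this when choosing $n$ with $\chi_n \equiv 1$ near $x_0$; the paper takes $\psi_k = 1$ on $\{|x|\le k\}$ for exactly this reason.
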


\begin{proof}
 We use a standard exhaustion argument. Consider a sequence of continuous functions $\psi_k: \R^N \to \R$
such that $0 \le \psi_k(x) \le 1$, for any $k \in \N$, and that $\psi_k(x) = 1$ whenever $|x| \le k$, and $\psi_k(x) =
0$ if $|x| \ge k+1$. Consider a sequence $Q_k := \Omega_k \times\, ]0,T_k[$ of regular cylinders, such that supp$\left(
\psi_k \right) \subset \Omega_k$, and $T_k\nearrow T$. Let $\widetilde u_k$ be the solution to
\begin{equation*}
\begin{cases}
\L \widetilde u_k= 0 &  \quad \text{in} \  Q_k, \\
\widetilde u_k (x,t) =  0 &  \quad \text{in} \ \partial \Omega_k \times [0,T_k], \\
\widetilde u_k (x,0) = \psi_k(x)u_0(x) &  \quad \text{in} \ \Omega_k \times \big\{ 0 \big\},
\end{cases}
\end{equation*}
whose existence is given by Lemma \ref{lem-bvp}. By the comparison principle, $\left( \widetilde u_k \right)_{k
\in \N}$ is a nondecreasing sequence of nonnegative solutions of the equation $\L \widetilde u_k = 0$, such that
$\widetilde u_k (x,t) \le u(x,t)$.
Then, the function
\begin{equation*}
  \widetilde u (x,t) := \lim_{k \to \infty} \widetilde u_k (x,t)
\end{equation*}
is a distributional solution of $\L \widetilde u = 0$ in $S_T$ such that $0 \le \widetilde u \le u$ in $S_T$. By the
hypoellipticity of $\L$, $\widetilde u$ is a  smooth classical solution of the equation $\L \widetilde u = 0$ in $S_T$.
In order to prove that $\widetilde u$
takes the initial condition, we fix any $x_0 \in \R^N$, and we choose $k_0 > |x_0|$. We have
\begin{equation*}
  \widetilde u_{k_0} (x,t) \le \widetilde u (x,t) \le u (x,t),
\end{equation*}
for every $(x,t) \in Q_k$ with $k \in \N$. Consequently, $\widetilde u(x,t) \to u_0(x_0)$ as $(x,t) \to (x_0,0)$,
and this concludes the proof.
\end{proof}

\begin{corollary}\label{cor_uniquenesspositive} The positive Cauchy problem has a unique solution if and only if
any nonnegative solution of the positive Cauchy problem with $u_0 = 0$ is  the trivial solution $u=0$.
\end{corollary}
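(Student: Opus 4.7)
The plan is to deduce the corollary directly from Proposition~\ref{prop_Donn}, which produces a \emph{minimal} nonnegative solution $\widetilde u$ of \eqref{p-Cauchy} dominated by every other nonnegative solution with the same initial datum.

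First I would handle the trivial direction: if the positive Cauchy problem admits at most one solution, then in particular the problem with $u_0 \equiv 0$ has a unique nonnegative solution. Since $u \equiv 0$ is obviously such a solution, it must be the only one.

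For the converse, assume that every nonnegative solution of \eqref{p-Cauchy} with $u_0 \equiv 0$ is identically zero, and let $u_1, u_2 \in C(\overline{S_T}) \cap C^\infty(S_T)$ be two nonnegative solutions of \eqref{p-Cauchy} with the same initial datum $u_0 \geq 0$. Apply Proposition~\ref{prop_Donn} to obtain a minimal nonnegative solution $\widetilde u$ of \eqref{p-Cauchy}, so that $0 \le \widetilde u \le u_i$ on $S_T$ for $i=1,2$. Then the functions $v_i := u_i - \widetilde u$ are smooth classical solutions of $\L v_i = 0$ in $S_T$, they are nonnegative by the minimality of $\widetilde u$, and they take the zero initial datum since both $u_i$ and $\widetilde u$ take $u_0$ continuously. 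By hypothesis, $v_1 \equiv v_2 \equiv 0$, hence $u_1 = u_2 = \widetilde u$.

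There is really no obstacle here beyond invoking Proposition~\ref{prop_Donn}: the minimal solution furnishes a common lower barrier, and subtracting it reduces the uniqueness question for arbitrary initial data to the uniqueness question with zero initial data. The only point that requires a brief word is that $v_i$ indeed attains the zero boundary value continuously at $t=0$, which is immediate because both $u_i$ and $\widetilde u$ do so; hence $v_i$ qualifies as a nonnegative solution of the positive Cauchy problem with $u_0=0$ and the assumption applies.
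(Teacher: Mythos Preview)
Your proof is correct and follows exactly the approach the paper intends: the paper states that Proposition~\ref{prop_Donn} ``clearly implies the above equivalence'' without spelling out the details, and your argument supplies precisely those details---subtracting the minimal solution to reduce to the zero-data case.
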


%\mysection{Proof of Theorem~\ref{th-main-2}}\label{sec_th-main-2}
%The present section is devoted to

In the following proof of Theorem~\ref{th-main-2}, which relies on Choquet's integral representation
theorem and the separation principle \eqref{eq_funct_eq-nodrift}, we resume the assumption $X_0=0$.

\begin{proof}[Proof of Theorem~\ref{th-main-2}] By Corollary \ref{cor_uniquenesspositive}, we may assume that $u_0 = 0$.

So, let $S_T = \R^N \times\, ]0,T[$ with $0 < T \le + \infty$, and let $u: S_T \to \R$ be a solution of the positive
Cauchy problem
\begin{equation} \label{e-Cauchy}
\begin{cases}
\L u (x,t)= 0 &  \quad (x,t) \in S_T, \\
u ( x, 0) = 0 & \quad x \in \R^N,\\
u (x,t)\geq 0 &  \quad (x,t) \in S_T.
\end{cases}
\end{equation}
We need to prove that $u = 0$.

As in \cite{KoranyiTaylor85}, we extend the solution $u$ of the Cauchy problem \eqref{e-Cauchy} to the whole domain
$\OT$ by setting
$$\tilde{u}(x,t) := \left\{
             \begin{array}{ll}
               u(x,t) & \quad t \in [0,T[, \\
               0 &  \quad t<0.
             \end{array}
           \right. $$
It is easy to see that $\tilde{u}$ is a distributional solution of $\L u = 0$ in $\OT$. Hence, the hypoellipticity of
$\L$ yields that $\tilde{u}$ is a nonnegative smooth classical solution of the equation \begin{equation}
\label{e-Cauchy-2}
    \L w= 0  \qquad \mbox{in }\OT,
\end{equation}
and $\tilde{u} = 0$ in $\R^N \times \R^-$. We need to prove that $\tilde{u}=0$ in $S_T$.

Suppose that $u\neq 0$, and let $a \in C( ]\!-\infty, T[)$ be a nonnegative function such that
$\tilde{u}\in \H_{a}^1$. By Choquet's integral representation theorem  and \eqref{e-exH}, it follows that  $\tilde{u}$
can be represented as
\begin{equation}\label{int_rep}
    \tilde{u}(x,t) = \int_{\H_+}v(x,t) d \mu (v)
\end{equation}
for some probability measure $\mu$ supported on $\big\{ 0 \big\} \cup \big\{ \exr \H_+ \cap \H_{a}^1 \big\}$. Recall
that $\tilde{u}(x,t)=0$ for $t\leq 0$. On the other hand, by \eqref{eq_int_curv}, any nonnegative solution $v\in \big\{
\exr \H_+ \cap \H_{a}^1 \big\}$ is strictly positive in a neighborhood of an integral curve of the form
$$\gamma:=\big\{ \exp\left( s \left(\omega \cdot X + Y \right) \right) z_0 \mid s \in \;] t_0- T, + \infty[\big\},$$
where $z_0 = (x_0,t_0)$ might depend on $v$. In particular, all such $v$ are strictly positive in $\R^N \times \R^-$.
Therefore, \eqref{int_rep} implies that
\begin{equation*}
    \mu \big\{ \exr \H_+ \cap \H_{a}^1 \big\} = 0.
\end{equation*}
Hence,  $\tilde{u} = 0$.
\end{proof}

\mysection{Mumford operator}\label{sec_mumford}
% \setcounter{equation}{0}
% \setcounter{theorem}{0}
%%%%%%%%%%%%%%%%%%%%%%%%%%%%%%%%%%%%%%%%%%%%%%%%%%%%%%%%%
The Mumford operator $\mathscr{M}$ is defined as
\begin{equation} \label{ex-Mum}
    \mathscr{M} u := \p_{t}  u - \cos(x) \p_y u - \sin(x) \p_w u - \p_x^2 u  \qquad (x,y,w,t) \in \R^4.
\end{equation}
It models the relative likelihood of different edges disappearing in some scene to be matched up by some hidden edges,
and explains the role of \emph{elastica} in computer vision \cite{Mumford}. In the present section we prove the
uniqueness of the positive Cauchy problem for $\M$, and we establish some properties of the minimal positive solutions
of $\M u = 0$. The following proposition allows us to apply our results to $\M$.

\begin{proposition} \label{prop-mum}
The Mumford operator $\M$ satisfies conditions {\rm(H0)} with the group operation
\begin{equation} \label{mumgl}
\begin{split}
 (x_0,y_0,w_0,t_0) \, \circ & \, (x,y,w,t) := \\
 & \big(x_0 + x, y_0 + y \cos (x_0)  - w \sin (x_0), \\
 & \qquad w_0 + y \sin (x_0) + w \cos (x_0), t_0 + t\big)
\end{split}
\end{equation}
for every $(x_0,y_0,w_0,t_0), (x,y,w,t) \in \R^4$. Moreover, $\M$ satisfies {\rm(H2)} with $\omega \ne 0$.
\end{proposition}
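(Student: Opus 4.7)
\medskip

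The plan is to verify the three assertions (group structure, Hörmander's condition (H0) together with left-invariance (H1), and (H2)) separately. For the group structure, I would first check that the operation $\circ$ has identity $(0,0,0,0)$, inverses given by the rotated reflection $(x_0,y_0,w_0,t_0)^{-1}=(-x_0,-y_0\cos x_0 - w_0\sin x_0, y_0\sin x_0 - w_0\cos x_0,-t_0)$, and is associative. The latter is the only nontrivial check; it reduces to the standard addition formulas for $\sin$ and $\cos$ because the $(y,w)$-block of left translation is the rotation $R(x_0)=\begin{pmatrix}\cos x_0 & -\sin x_0\\ \sin x_0 & \cos x_0\end{pmatrix}$, and $R(x_0)R(x_1)=R(x_0+x_1)$. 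Once $\circ$ is a Lie group, left-invariance of $X_1=\partial_x$ is immediate, and left-invariance of $Y=\cos(x)\partial_y+\sin(x)\partial_w-\partial_t$ follows from the identity $R(x_0)(\cos x,\sin x)^{\mathsf T}=(\cos(x_0+x),\sin(x_0+x))^{\mathsf T}$ combined with $dL_{z_0}=\mathrm{diag}(1,R(x_0),1)$.

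For (H0), I would directly compute
\begin{equation*}
[X_1,Y]=-\sin(x)\partial_y+\cos(x)\partial_w,\qquad [X_1,[X_1,Y]]=-\cos(x)\partial_y-\sin(x)\partial_w,
\end{equation*}
and observe that the matrix of $X_1,Y,[X_1,Y],[X_1,[X_1,Y]]$ in the basis $(\partial_x,\partial_y,\partial_w,\partial_t)$ has a $2\times 2$ middle block of determinant $\sin^2 x+\cos^2 x=1$, so the rank is $4$ at every point of $\R^4$.

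For (H2) with $\omega\neq 0$ fixed, I would first integrate the ODE $\gamma'=\omega X_1+Y$, $\gamma(0)=(0,0,0,0)$, explicitly:
\begin{equation*}
\gamma(s)=\exp\bigl(s(\omega X_1+Y)\bigr)(0,0,0,0)=\Bigl(\omega s,\ \tfrac{\sin(\omega s)}{\omega},\ \tfrac{1-\cos(\omega s)}{\omega},\ -s\Bigr).
\end{equation*}
This is an $\L$-admissible path, so it lies in $A_{(0,0,0,0)}(\Omega)$ for any bounded $\Omega$ containing its trace; the actual work is to show $\gamma(s)\in\mathrm{Int}\,\A_{(0,0,0,0)}(\Omega)$ for all small $s>0$. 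My approach is a first-order accessibility (needle-variation) argument applied to the endpoint map $E\colon\omega(\cdot)\mapsto\gamma_\omega(s)$ acting on piecewise constant controls. Its first variation at the constant control $\omega$ in a perturbation $\delta\omega$ is
\begin{equation*}
\delta E=\int_0^s \Phi(s,\tau)X_1(\gamma(\tau))\,\delta\omega(\tau)\,d\tau,
\end{equation*}
where $\Phi(s,\tau)$ is the propagator of the linearization along $\gamma$. Expanding $\Phi(s,\tau)X_1(\gamma(\tau))$ in powers of $s-\tau$ produces the iterated brackets $\mathrm{ad}^k(\omega X_1+Y)X_1$ at $\gamma(s)$; because $\{X_1,\omega X_1+Y\}$ and $\{X_1,Y\}$ generate the same Lie algebra, the Hörmander condition verified above guarantees that the span of these brackets is $T_{\gamma(s)}\R^4$. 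Thus $E$ is a submersion at $\omega$, so $E$ maps a neighbourhood of the constant control onto an open neighbourhood of $\gamma(s)$, which places $\gamma(s)$ in the interior of the attainable set. Choosing $\Omega$ to be any bounded open set containing all the nearby trajectories and fixing $s_0>0$ small yields (H2).

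The main obstacle is this last step: translating the bracket-generating property into an actual interior point on the trajectory. The complication is that (H2) requires the \emph{specific} point $\gamma(s)$ to be interior, not merely that the attainable set has nonempty interior somewhere (which is the content of Krener's theorem). The needle-variation/submersion argument is the natural tool, but one must check carefully that the relevant Lie brackets---of the combined direction $\omega X_1+Y$ with $X_1$, evaluated at $\gamma(s)$---indeed span $T_{\gamma(s)}\R^4$; this is where the explicit computation of $[X_1,Y]$ and $[X_1,[X_1,Y]]$ together with the determinant identity $\sin^2 x+\cos^2 x=1$ pays off.
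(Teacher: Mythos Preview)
Your argument for (H2) has a gap. You claim that the iterated brackets $\mathrm{ad}^k(\omega X_1+Y)X_1$ span $T_{\gamma(s)}\R^4$, but they do not: since $X_1=\partial_x$ and every bracket $[\omega X_1+Y,\,\cdot\,]$ applied to a vector field without $\partial_t$-component again has no $\partial_t$-component, the span of $\{\mathrm{ad}^k(\omega X_1+Y)X_1:k\ge0\}$ is only the three-dimensional subspace $\mathrm{span}\{\partial_x,\partial_y,\partial_w\}$. Your H\"ormander verification used $Y$ itself to supply $-\partial_t$, but $Y$ is the drift, not a control direction, so it does not appear among the needle variations of the fixed-time endpoint map. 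Consequently $E$ is only a submersion onto the slice $\{t=-s\}$, not onto $\R^4$, and you cannot conclude that $\gamma(s)$ is a \emph{four}-dimensional interior point of $\A_{(0,0,0,0)}(\Omega)$. The repair is straightforward: vary the terminal time as well. The tangent $\gamma'(s)=(\omega X_1+Y)(\gamma(s))$ has $\partial_t$-component $-1$, so the extended map $(s,\omega(\cdot))\mapsto\gamma_\omega(s)$ is a submersion onto $\R^4$, and the interior-point claim follows.

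The paper's route for (H2) is quite different and more explicit. Rather than a control-theoretic submersion argument, it determines the full attainable set
\[
\A_{z_0}=\bigl\{(x,y,w,t):\sqrt{(y-y_0)^2+(w-w_0)^2}\le t_0-t\bigr\},
\]
the outer inclusion coming from $|X_0|\equiv1$ and the inner inclusion from an explicit construction of admissible paths (rotate quickly to the correct heading, travel straight, rotate back). One then checks directly that $\exp\bigl(s(\omega X_1+Y)\bigr)(0)$ lies in the interior of this cone exactly when $\omega\ne0$: for $\omega=0$ the $(y,w)$-displacement equals $s$ and the point sits on the boundary, while for $\omega\ne0$ one has $\tfrac{2}{|\omega|}\bigl|\sin(\omega s/2)\bigr|<s$. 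This approach has the bonus of exhibiting precisely why $\omega=0$ fails and of identifying $\A_{z_0}$ explicitly (the same description reappears later in Section~7). Your corrected submersion argument is more abstract and transfers to other operators without such clean geometry, but it does not by itself give the ``only if'' direction for $\omega\ne0$.
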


\begin{proof} Condition (H0) is verified by a direct computation. Moreover, it is known that $\M$ is invariant with
respect to the left translations of the group $\mathbb{G}:= (\R^3\times \R, \circ)$ on $\R^4$ whose operation is defined
by \eqref{mumgl}, (see \cite[Formula (61)]{BonfiglioliLanconelli-2012}).
$\mathbb{G}$ is called in the literature the \emph{roto-translation group}.

In order to check (H2), we note that
\begin{equation} \label{eq-gamma-0-Mum}
  \exp\left(s Y \right)  (x,y,w,t) = \left(x ,y + s \cos(x), w + s \sin(x), t-s\right),
\end{equation}
where $Y = \cos(x) \p_y  + \sin(x) \p_w -\p_{t}$  (see \eqref{eY}), while
\begin{equation} \label{eq-gamma-Mum}
\begin{split}
  & \exp\left(s (\omega X + Y)\right)  (x,y,w,t) = \\
  & \qquad \qquad \left(x + s \omega,y +
\frac{\sin(x + s \omega) - \sin(x)}{\omega},w - \frac{\cos(x + s \omega) - \cos(x)}{\omega},t-s\right),
\end{split}
\end{equation}
for every $(x,y,w,t) \in \R^4$, and $s,\omega \in \R$, with $\omega \ne 0$.

We first show that
\begin{equation} \label{eq-prop-Mum}
  \A_{z_0} = \big\{ (x,y,w,t) \in \R^4 \mid \sqrt{(y-y_0)^2 + (w-w_0)^2}\le t_0-t \big\},
\end{equation}
for every $z_0 = (x_0,y_0,w_0,t_0) \in \R^4$. The inclusion $\A_{z_0}$ in the right hand side of \eqref{eq-prop-Mum}
follows directly from the definition of attainable set, and from the fact that the norm of the drift term $X_0 = \cos(x)
\p_y + \sin(x) \p_w \simeq (0, \cos(x), \sin(x), 0)$ equals $1$.

We next prove the inclusion of the right hand side of \eqref{eq-prop-Mum} in $\A_{z_0}$. We first note that, by the
invariance with respect to the Lie operation \eqref{mumgl}, it is not restrictive to assume that $(x,y,w,t) = 0$. We
also assume that $(y_0, w_0) \ne (0,0)$ since $\A_{z_0}$ is the closure of the set of the reachable points. We
introduce
polar coordinates; $\widetilde x = - \arg(y_0,w_0)$, and $\widetilde t = \sqrt{y_0^2 + w_0^2}$, and we note that
\begin{equation} \label{eq-tilde}
  (y_0, w_0) = - \widetilde t (\cos(\widetilde x), \sin(\widetilde x)) \qquad 0 < \widetilde t \le t_0.
\end{equation}
We define the sequence of paths $\left( \g_k \right)_{k \in \N}$ in the interval $[0, \widetilde t]$ by choosing
\begin{equation*} %\label{eq-prop-Mum}
  x_k(0) = x_0, \qquad x_k(\widetilde t) = 0, \qquad x_k(s) = \widetilde x, \quad \text{for} \quad \frac{\widetilde t}{4
k} \le s \le
\left( 1 - \frac{1}{4 k}\right)\widetilde t,
\end{equation*}
and $x_k$ linear in $\left[ 0, \frac{\widetilde t}{4 k} \right]$ and in $\left[ \left( 1 - \frac{1}{4
k}\right)\widetilde t, \widetilde t \right]$.
If $\widetilde t < t_0$, we set $x_k(s) = 2 \pi (s - t_0 + \widetilde t)/\widetilde t$, for every $s \in [t_0 -
\widetilde t, t_0]$. Moreover,
\begin{equation*} %\label{eq-ywt-tilde}
  y_k(s) = y_0 + \int_0^s \cos(x_k(\tau)) d \tau, \quad w_k(s) = w_0 + \int_0^s \sin(x_k(\tau)) d \tau, \quad t_k(s)
= t_0 -s.
\end{equation*}
We clearly have that $x_k(t_0) = 0, t_k(t_0)=0$. Moreover, a simple computation based on \eqref{eq-tilde} gives
$|y_k(t_0)| = |y_k(\widetilde t)| \le \frac{1}{2k}(|y_0| + \widetilde t) \le \frac{1}{2k}(|y_0| + t_0)$ and,
analogously, $|w_k(t_0)| \le \frac{1}{2k}(|w_0| + t_0)$.
This proves that $\g_k(t_0) \to 0$ as $k \to + \infty$. In particular $0 \in \A_{z_0}$, and the proof of
\eqref{eq-prop-Mum} is completed.

The above argument also applies to any bounded open box $\Omega$ which is sufficiently wide in the $x$-direction.
More precisely, if $\Omega = ]x_0-R_x,x_0+R_x[ \times ]y_0-R_y,y_0+R_y[ \times ]w_0-R_w,w_0+R_w[ \times
]t_0-R_t,t_0+R_t[$ with $R_x > \pi$, then
\begin{equation*} %\label{eq-prop-Mum-H2}
  \A_{z_0}(\Omega) = \big\{ (x,y,w,t) \in \Omega \mid \sqrt{(y-y_0)^2 + (w-w_0)^2}\le t_0-t \big\}.
\end{equation*}

Note that, by \eqref{eq-gamma-0-Mum} and \eqref{eq-gamma-Mum}, we have that $\exp\left(s (\omega X + Y)\right)
(z_0)$ belongs to the interior of $\A (z_0)(\Omega)$ if, and only if, $\omega \ne 0$. This proves (H2).
\end{proof}
We next prove a separation principle for the extremal solutions of the equation $\M u = 0$. We have
\begin{proposition} \label{prop-sep-mum}
For every $u \in \exr \H_+$ there exist two constants $\beta \in \R$ and $C_0>0$ such that
\begin{equation*}
 u(x + 2 k \pi,y,w,t) = C_0^k e^{\beta t} u(x,y,w,0) \qquad \text{for every} \quad (x,y,w,t) \in \OT, \ k \in \Z.
\end{equation*}
In particular for $k=0$, we have
\begin{equation*}
 u(x,y,w,t) = e^{\beta t} u(x,y,w,0) \qquad \text{ for every} \quad (x,y,w,t) \in \OT.
\end{equation*}

\end{proposition}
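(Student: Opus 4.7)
The plan is to deduce the proposition from Proposition~\ref{prop-separation} by exploiting the $2\pi$-periodicity of $\cos$ and $\sin$ in the coefficients of $\M$. Formula \eqref{eq-gamma-Mum} shows that whenever $s\omega=2\pi$, the exponential map collapses to $\exp(s(\omega X + Y))(x,y,w,t)=(x+2\pi,y,w,t-s)$, which by \eqref{e-gcirc} is right translation by $(2\pi,0,0,-s)$ in the roto-translation group \eqref{mumgl}. Since $\cos(x+2\pi)=\cos x$ and $\sin(x+2\pi)=\sin x$, a direct computation shows that this right translation sends $\H_+$ into itself, so the hypothesis \eqref{eq_right_inv-2} of Proposition~\ref{prop-separation} is met. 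By Proposition~\ref{prop-mum}, every $\omega\neq 0$ satisfies (H2), so I am free to choose $\omega$ of either sign.

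My first step will be to extract a \emph{pure time-separation} principle by composing two such exponentials with opposite signs of $\omega$. Specifically, I will take $\omega_1>0$, $\omega_2=-\omega_1$, and $s_1=s_2=2\pi/\omega_1>0$; then a short computation in the group law \eqref{mumgl} shows that the composition
\[
\exp(s_1(\omega_1 X+Y))\circ \exp(s_2(\omega_2 X+Y)) = (0,0,0,-(s_1+s_2))
\]
is a pure time shift. Because $\M$ has $t$-independent coefficients, such a time translation sends $\H_+$ into itself, and Proposition~\ref{prop-separation} produces a positive constant $C$ with $u(x,y,w,t-(s_1+s_2))=C u(x,y,w,t)$ for every nonzero $u\in\exr\H_+$. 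Letting $\omega_1$ range over $(0,\infty)$, I obtain $u(x,y,w,t-s)=C(s)u(x,y,w,t)$ for every $s>0$, and the iteration-plus-continuity argument already used in the proof of Theorem~\ref{th-main} identifies $C(s)=e^{-\beta s}$ for a single $\beta\in\R$. This gives the time factorization $u(x,y,w,t)=e^{\beta t}u(x,y,w,0)$.

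For the second step I will apply Proposition~\ref{prop-separation} once more with a single exponential: take $\omega>0$ and $s=2\pi/\omega$ to obtain $u(x+2\pi,y,w,t-s)=C_1 u(x,y,w,t)$ for some $C_1>0$. Feeding in the time separation just established, the $-s$ offset drops out and leaves
\[
u(x+2\pi,y,w,t)=C_0\,u(x,y,w,t), \qquad C_0:=C_1 e^{\beta s}>0.
\]
Iterating this identity $k$ times for $k\geq 0$ and using its inverse for $k<0$, I arrive at $u(x+2k\pi,y,w,t)=C_0^k u(x,y,w,t)=C_0^k e^{\beta t} u(x,y,w,0)$ for every $k\in\Z$, which is the claim.

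I do not anticipate a serious obstacle: the argument is a clean two-step application of Proposition~\ref{prop-separation}. The one delicate point, and the key Mumford-specific input, is the group-theoretic observation that two oppositely-rotating exp-paths of equal duration combine into a pure time shift in the roto-translation group; this is what unlocks a time-separation principle even though (H2) itself fails for $\omega=0$ in the Mumford case.
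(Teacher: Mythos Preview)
Your proposal is correct and follows essentially the same approach as the paper's own proof: compose $\exp(s(\omega X+Y))$ with $\exp(s(-\omega X+Y))$ for $s\omega=2\pi$ to obtain a pure time shift, apply Proposition~\ref{prop-separation} and the continuity argument of Theorem~\ref{th-main} to get $u(x,y,w,t-s)=e^{-\beta s}u(x,y,w,t)$, then apply Proposition~\ref{prop-separation} once more to the single exponential $\exp(s(\omega X+Y))(x,y,w,t)=(x+2\pi,y,w,t-s)$ and combine. The only cosmetic difference is that you phrase the composition via the group product \eqref{mumgl}, whereas the paper writes it as an iterated exponential map.
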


\begin{proof} We first prove that
\begin{equation} \label{eq-sep-mum}
 u(x,y,w,t - s) = e^{-\beta s} u(x,y,w,t) \quad \text{for every} \quad (x,y,w,t) \in \OT, \ s > 0.
\end{equation}
Fix any positive $s$, and choose $\omega = 2 \pi / s$. Recall \eqref{eq-gamma-Mum}, and note that
\begin{equation*}
  \exp \left(s (- \omega X + Y) \right) \left( \exp\left(s (\omega X + Y)\right) (x,y,w,t) \right) = (x , y, w,t-2 s),
\end{equation*}
and that the change of variable  $(x,y,w,t) \mapsto (x , y, w,t-2 s)$ preserves the equation $\M u = 0$. Then the
hypotheses of Proposition \ref{prop-separation} are satisfied with $\omega_1 = - \omega_2 := \omega$ and $s_1 = s_2 :=
s$. Hence we have
\begin{equation*}
  u (x , y, w,t-2 s) = C  u (x , y, w,t)
\end{equation*}
for some positive constant $C=C(s)$. Hence, \eqref{eq-sep-mum} followed as in the last part
of the proof of Theorem~\ref{th-main}.

In order to conclude the proof, we consider again a positive $s$, we set $\omega = 2 \pi / s$, and we note that
\begin{equation*}
  \exp\left(s (\omega X + Y)\right) (x,y,w,t) = (x + 2 \pi, y, w,t-s).
\end{equation*}
Also in this case the assumptions of Proposition \ref{prop-separation} are satisfied with $\omega_1 := \omega$ and
$s_1 := s$, thus there exists a positive constant $C$ such that
\begin{equation*}
  u (x + 2 \pi, y, w,t - s) = C  u (x , y, w,t) \quad \text{for every} \quad (x,y,w,t) \in \OT.
\end{equation*}
The conclusion of the proof then follows by combining the above identity  with \eqref{eq-sep-mum}.
\end{proof}

The following result is a corollary of Proposition \ref{prop-sep-mum}.

\begin{theorem} \label{th-mum-PCP}
Let $\M$ be the Mumford operator \eqref{ex-Mum}, and let $u_0 \geq 0$ be a continuous function in $\R^3$. Then the
positive Cauchy problem
\begin{equation*}
\begin{cases}
  \M u (x,y,w,t) = 0 & \quad (x,y,w,t) \in  S_T, \\
  u (x,y,w,0) = u_0(x,y,w) & \quad (x,y,w) \in \R^3,\\
  u (x,y,w,t) \geq 0 & \quad (x,t) \in S_T,
\end{cases}
\end{equation*}
admits at most one solution.
\end{theorem}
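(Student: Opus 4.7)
The plan is to mimic the strategy of Theorem~\ref{th-main-2}, replacing the separation principle \eqref{eq_funct_eq-nodrift} by its Mumford counterpart established in Proposition~\ref{prop-sep-mum}. First I would check that $\M$ fits into the framework of Section~\ref{sec_Cauchy}: it is hypoelliptic by Proposition~\ref{prop-mum}, and the non-total degeneracy condition \eqref{e-Bony} holds with $\nu=(1,0,0)$ since $X_1=\p_x$. In particular, Bony's maximum principle, the construction of regular cylinders from Section~\ref{sec_Cauchy}, Lemma~\ref{lem-bvp}, and the Donnelly-type reduction of Proposition~\ref{prop_Donn}/Corollary~\ref{cor_uniquenesspositive} all apply to $\M$. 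Consequently it suffices to show that the only nonnegative solution of the problem with zero initial data is $u\equiv 0$.

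Let then $u\in C(\overline{S_T})\cap C^\infty(S_T)$, $u\ge 0$, solve $\M u=0$ in $S_T$ with $u(\,\cdot\,,0)=0$. As in the proof of Theorem~\ref{th-main-2}, define $\tilde u = u$ on $S_T$ and $\tilde u=0$ on $\R^{3}\times\,]\!-\infty,0]$. This extension is a nonnegative distributional solution of $\M\tilde u=0$ on the whole of $\OT$, hence a smooth classical element of $\H_+$ by hypoellipticity. If $\tilde u\not\equiv 0$, I rescale so that $\tilde u\in\H_a^1$ for a suitable positive sequence $a$, and apply Choquet's integral representation together with \eqref{e-exH} to obtain a probability measure $\mu$ supported on $\{0\}\cup(\exr\H_+\cap\H_a^1)$ such that
\begin{equation*}
  \tilde u(x,y,w,t)=\int_{\H_+} v(x,y,w,t)\,d\mu(v)\qquad\forall(x,y,w,t)\in\OT.
\end{equation*}

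The decisive input is Proposition~\ref{prop-sep-mum}: every $v\in\exr\H_+$, $v\neq 0$, satisfies
\begin{equation*}
  v(x,y,w,t)=e^{\beta_v t}\,v(x,y,w,0)\qquad\forall(x,y,w,t)\in\OT,
\end{equation*}
for some real $\beta_v$. Fix any $(x_0,y_0,w_0)\in\R^3$ and choose $t_1<0$. Since $\tilde u(x_0,y_0,w_0,t_1)=0$ and the integrand is nonnegative, $v(x_0,y_0,w_0,t_1)=0$ for $\mu$-a.e.\ $v$; by the separation formula this forces $v(x_0,y_0,w_0,0)=0$, and therefore $v(x_0,y_0,w_0,t)=e^{\beta_v t}\cdot 0=0$ for every $t$, $\mu$-a.e.\ $v$. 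Integrating against $\mu$ yields $\tilde u(x_0,y_0,w_0,t)=0$ for every $t\in\,]\!-\infty,T[$. Since $(x_0,y_0,w_0)$ was arbitrary, $\tilde u\equiv 0$ on $\OT$, and in particular $u\equiv 0$ on $S_T$, proving uniqueness.

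The one delicate point I expect to be the main obstacle is the verification that the hypotheses of Proposition~\ref{prop_Donn} and of Lemma~\ref{lem-bvp} are genuinely available for $\M$; that is, the construction of a family of regular cylinders exhausting $S_T$ requires \eqref{e-Bony}, which here is guaranteed by $X_1=\p_x$ and $\nu=(1,0,0)$, and the corresponding monotone exhaustion of the minimal solution goes through verbatim. Everything else is a direct application of Choquet's theorem combined with the Mumford-specific separation principle \eqref{eq-sep-mum} obtained earlier in the section.
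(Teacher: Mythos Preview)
Your proposal is correct and follows exactly the route the paper intends: the paper's own proof simply states that the argument of Theorem~\ref{th-main-2} goes through verbatim once the Mumford separation principle \eqref{eq-sep-mum} from Proposition~\ref{prop-sep-mum} is available, and you have faithfully supplied those details. Your final step---propagating the vanishing $v(x_0,y_0,w_0,t_1)=0$ from a single $t_1<0$ to all $t$ via the separation formula---is a clean variant of the positivity argument used in the proof of Theorem~\ref{th-main-2} and is well suited to the Mumford case.
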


\begin{proof}
 The proof is exactly as in the proof of Theorem \ref{th-main-2}, once the separation principle
\eqref{eq-sep-mum} has been established. We omit the details.
\end{proof}
%%%%%%%%%%%%%%%%%%%%%%%%%%%%%%%%%%%%%%%%%%%%
\mysection{Kolmogorov-Fokker-Planck operators} \label{sec_Kolmogorov}
Consider the Kolmogorov operator
\begin{equation}\label{eq-Kolmogorov-md}
  \L u (x,y,t) := \partial_t u(x,y,t) - \sum_{j=1}^m \partial^2_{x_j} u(x,y,t) - \sum_{j=1}^m x_j \partial_{y_{j}}
u(x,y,t),
\end{equation}
with $(x,y,t) \in \R^m \times \R^m \times \R$. As usual, we denote $\OT=\R^{2m} \times ]-\infty, T[$. The operator
$\L$ can be written in the form \eqref{e1} by setting $X_j: = \partial_{x_j}$ for $j= 1, \dots, m$, and $X_0 :=
\sum_{j=1}^m x_j \partial_{y_{j}}$. It follows that $\L$ satisfies H\"ormander's condition (H0). The vector fields
$X_j$'s and $Y := X_0-\partial_t$ are invariant with respect to the left translations and the dilation defined by
\begin{equation} \label{kolmogl}
    (\xi, \eta, \tau) \circ (x,y,t) := (x + \xi, y + \eta - t \xi ,t + \tau),
    \qquad \delta_r (x,y,t) := (r x, r^3 y, r^2 t),
\end{equation}
respectively. An invariant Harnack inequality for Kolmogorov equations was first proved by Garofalo and Lanconelli in
\cite{GarofaloLanconelli}. It can be written in its restricted form as in Proposition~\ref{p-restr-harnack} with
$\omega = 0$. It reads as
\begin{equation} \label{e-harnack-kolmogorov}
  u\left( x, y+\t x, t-\t \right) \le C_{\t} \, u(x,y,t) \qquad \text{for every} \ (x,y,t) \in \R^{2m+1}  \text{ and }
\t >0.
\end{equation}
We stress that due to the drift term $X_0 - \partial_t$, the Harnack inequality for Kolmogorov equations is
different from \eqref{e-harnack-Heisenberg}. The above discussion applies to the following more general class of
operators of the above type, first studied by Lanconelli and Polidoro in \cite{LanconelliPolidoro94}. We also refer to
the book by Lorenzi and Bertoldi \cite{LorenziBertoldi} and to the bibliography therein for results on Kolmogorov
equations obtained by semigroup theory.

We summarize the properties of $\mathscr{L}$ that are needed for its study in our functional setting. Condition
(H0) can be verified by a direct computation, while the group operation required to satisfy (H1) is defined
in  \eqref{kolmogl}. Condition (H2) holds for every $\omega \in \R^m$. In the
sequel we choose $\omega = 0$.

We use the explicit expression of the fundamental solution $\Gamma$ of $\L$ to compute the Martin functions of $\R^{2m} \times
]-\infty, T[$. We recall that this method has been used in \cite{CranstonOreyRosler} (see (1.2) therein)
to compute the complete parabolic and elliptic Martin boundary for nondegenerate Ornstein-Uhlenbeck processes in
dimension two (see also \cite{Doob} for other explicit examples of computing parabolic Martin boundaries).

We recall the definition of Martin functions for our case.  Assume for simplicity that $T<\infty$. We say that a
sequence $\{(\xi_k, \eta_k, \tau_k)\}_{k\in \N}$ is a
\emph{fundamental sequence} if $\| ( \xi_k, \eta_k, \tau_k )\| \to + \infty$ as $k \to \infty$ and the corresponding
sequence of {\em Martin quotients} $\{u_k\}$ given by
\begin{equation}\label{eq-MBlim}
  u_k(x,y,t)  := \frac{\Gamma(x,y,t, \xi_k, \eta_k, \tau_k)}{\Gamma(0,0,T, \xi_k, \eta_k, \tau_k)}
 \end{equation}
converges to a nonnegative solution $u(x,y,t) := \lim_{k \to \infty} u_k(x,y,t)$ in $\H_+$. Such a $u$ is called a {\em
Martin function}
$u$ of $\L$ in $\OT$. It is a is a nonnegative solution of $\L u = 0$ in $\OT$ which is defined by
some fundamental sequence $(\xi_k, \eta_k, \tau_k)_{k\in \N}$. Note that $\Gamma(0,0,T, \xi_k, \eta_k, \tau_k) = 0$
whenever $T\leq \tau_k$, hence we need to assume $T> \tau_k $ for every $k \in \N$.

The explicit form of the fundamental solution $\Gamma$ of Kolmogorov operator is known and is given by
\begin{equation} \label{e-KolmogorovFS}
\Gamma(x,y,t, \xi, \eta, \tau) = \left( \tfrac{3}{2 \pi}\right)^{m/2} \!\! \dfrac{1}{(t- \tau)^{2m}}  \exp\! \left(
\!\! - \frac{\|x- \xi\|^2}{4(t- \tau)} - 3 \frac{\|y - \eta + \tfrac{t- \tau}{2} (x +\xi)\|^2}{(t- \tau)^3} \right)
\end{equation}
if $t > \tau$, while $\Gamma(x, y, t, \xi, \eta, \tau) = 0$ if $t \le \tau$.

\medskip

 We have
\begin{proposition}\label{p_Kolmo}
Let $\L$ be the Kolmogorov operator \eqref{eq-Kolmogorov-md}, and let $u$ be a Martin function for $\L u =
0$ in $\OT$. Then either $u = 0$, or there exists $v \in \R^m$ such that
\begin{equation} \label{eq-exp-Kolmo}
  u(x,y,t) = \exp \left( \langle x, v \rangle + t \|v\|^2 \right) \quad \text{for all} \ (x,y,t) \in \OT.
\end{equation}
\end{proposition}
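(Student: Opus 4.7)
The plan is to compute the Martin quotient $u_k$ directly from \eqref{e-KolmogorovFS} and analyze its pointwise limit as $k\to\infty$. Set $s_k:=t-\tau_k$ and $S_k:=T-\tau_k$, so that $S_k-s_k=T-t$ is independent of $k$. The prefactors of $\Gamma$ combine into $(S_k/s_k)^{2m}$, and the difference of the Gaussian exponents becomes a polynomial $\Phi_k(x,y,t)$ of degree two in $(x,y)$ with explicit coefficients in $s_k,S_k,\xi_k,\eta_k$. Since $\|(\xi_k,\eta_k,\tau_k)\|\to\infty$, one of the following occurs after passing to a subsequence.

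If $\tau_k$ stays bounded, then $\tau_k\to\tau_0\in(-\infty,T]$ and $\|(\xi_k,\eta_k)\|\to\infty$. When $\tau_0=T$, for any fixed $t<T$ one has $\tau_k>t$ eventually, so the numerator of $u_k$ vanishes and $u\equiv 0$. When $\tau_0\in(-\infty,T)$, expanding the squares in the exponents yields
\[
\Phi_k(0,0,t)=(T-t)\!\left[-\frac{\|\xi_k\|^2}{s_k S_k}-\frac{3(S_k^2+S_k s_k+s_k^2)\|\eta_k\|^2}{s_k^3 S_k^3}+\frac{3(S_k+s_k)\langle\eta_k,\xi_k\rangle}{s_k^2 S_k^2}\right],
\]
and the $2\times 2$ matrix of this quadratic form (acting block-diagonally on $\R^m\times\R^m$) has determinant
\[
\frac{3(S_k-s_k)^2}{4s_k^4 S_k^4}=\frac{3(T-t)^2}{4s_k^4 S_k^4}>0,
\]
so the form is negative definite. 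Hence $\Phi_k(0,0,t)\to-\infty$ quadratically in $\|(\xi_k,\eta_k)\|$, while the linear-in-$(x,y)$ part of $\Phi_k$ grows only linearly; this forces $u_k\to 0$ pointwise on $\OT$ and $u\equiv 0$.

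If instead $\tau_k\to-\infty$, then $s_k,S_k\to\infty$ with $s_k/S_k\to 1$. A direct expansion shows that the quadratic-in-$(x,y)$ coefficients of $\Phi_k$ are $O(1/s_k)\to 0$ and that the linear-in-$y$ coefficient is $6\eta_k/s_k^3-3\xi_k/s_k^2=O(\eta_k/s_k^3)+O(\xi_k/s_k^2)$. For $u_k$ to admit a nonzero limit, the quantities $\xi_k/S_k$ and $\eta_k/S_k^2$ must stay bounded; extracting a further subsequence, set $\xi_k/(2S_k)\to p$ and $\eta_k/S_k^2\to q$ in $\R^m$. The linear-in-$y$ piece then vanishes in the limit (it is $O(1/S_k)$), and a routine calculation of the remaining terms yields
\[
\Phi_k(x,y,t)\longrightarrow\langle x,\,3q-2p\rangle-(T-t)\|3q-2p\|^2.
\]
Writing $v:=3q-2p\in\R^m$ and using $(S_k/s_k)^{2m}\to 1$ gives $u(x,y,t)=e^{-T\|v\|^2}\exp(\langle x,v\rangle+t\|v\|^2)$, i.e., $u$ is a positive scalar multiple of the function in \eqref{eq-exp-Kolmo}. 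Every $v\in\R^m$ is realized by the fundamental sequence $\tau_k=-k$, $\xi_k=-vS_k$, $\eta_k=0$.

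The main difficulty lies in the first case: the clean value $3(T-t)^2/(4s_k^4 S_k^4)$ of the determinant is exactly what forces negative definiteness and hence $u\equiv 0$ whenever $\tau_k$ fails to tend to $-\infty$. In the second case the work is bookkeeping, the key point being the identification of the natural scalings $\xi_k\sim S_k$ and $\eta_k\sim S_k^2$---which are precisely the homogeneity weights of the Kolmogorov dilation \eqref{kolmogl}---and the verification that among the two free vector parameters $p,q\in\R^m$ only the single combination $v=3q-2p$ survives in the pointwise limit.
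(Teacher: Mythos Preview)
Your treatment of the case $\tau_k\to-\infty$ has a genuine gap: the assertion that ``for $u_k$ to admit a nonzero limit, the quantities $\xi_k/S_k$ and $\eta_k/S_k^2$ must stay bounded'' is neither proved nor true. Take $m=1$, $T=0$, and $(\xi_k,\eta_k,\tau_k)=(3k^2,\,k^3,\,-k)$; then $\xi_k/S_k=3k\to\infty$ and $\eta_k/S_k^2=k\to\infty$, yet your own formula for the $y$-coefficient gives $6\eta_k/s_k^3-3\xi_k/s_k^2\to 6-9=-3$, and a full expansion yields
\[
\Phi_k(x,y,t)\;\longrightarrow\;3t^3-3tx-3y.
\]
Hence $u_k\to\exp\big(3t^3-3tx-3y\big)$, a strictly positive solution of $\L u=0$ in $\OT$ that depends on $y$ and is \emph{not} of the form \eqref{eq-exp-Kolmo}. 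The underlying reason is that as $S_k\to\infty$ your negative-definite form degenerates (its determinant $3(T-t)^2/(4s_k^4S_k^4)\to 0$), so sequences escaping along the near-null direction can produce finite nonzero limits. The paper's proof organizes the computation around the single combination $3\tilde\eta_k-\tilde\xi_k$ (with $\tilde\xi_k=\xi_k/(-\tau_k)$, $\tilde\eta_k=\eta_k/(-\tau_k)^2$) rather than the two scalings separately, but the same example (where $3\tilde\eta_k-\tilde\xi_k\equiv 0$) shows that the paper's asserted asymptotic \eqref{eq-xi-eta-k} cannot hold as written in this regime either; the gap is shared, and in fact the example above appears to contradict the statement of the proposition itself.

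By contrast, your handling of the bounded-$\tau_k$ case via the strict negative definiteness of the $2\times2$ block is correct and is more self-contained than the paper's argument, which at that step invokes the uniqueness of the positive Cauchy problem from \cite{Polidoro1995} to pass from $\{t<\tilde\tau\}$ to $\{\tilde\tau<t<T\}$.
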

Since in any Bauer harmonic space all the extremal solutions are Martin kernels (see, Proposition 4.1 and Theorem 5.1 in
\cite{Maeda91}), we have

\begin{corollary} \label{c_Kolmo} Any nonnegative solution $u=u(x,y,t)$ of the Kolmogorov equation $\L u = 0$ in $\OT$
does not depend on the variable $y$, and $u$ is a nonnegative solution of the heat equation $\partial_t w(x,t) =
\varDelta w(x,t)$ in $\R^m\times \R_T$.

In particular, any nonzero nonnegative solution of the equation $\L u = 0$ in $\OT$ is strictly positive, and the
uniqueness of the positive Cauchy problem in $S_T$ holds true.

\end{corollary}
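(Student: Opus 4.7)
The plan is to deduce all three assertions directly from Proposition~\ref{p_Kolmo}, from the identification of extremal elements of $\H_+$ with Martin kernels given by Maeda (Proposition~4.1 and Theorem~5.1 in \cite{Maeda91}), and from the Choquet-type representation machinery developed in Section~\ref{sec_funct}. The Kolmogorov operator $\L$ in \eqref{eq-Kolmogorov-md} satisfies (H0), (H1) and (H2) (with $\omega=0$), as recorded immediately before the proposition, so the entire functional setting of Section~\ref{sec_funct} applies verbatim.

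First, combining Maeda's result with Proposition~\ref{p_Kolmo} forces every $u\in\exr\H_+$ with $u\neq 0$ to have the explicit form $u(x,y,t)=\exp(\langle x,v\rangle+t\|v\|^2)$ for some $v\in\R^m$; in particular, every extremal nonnegative solution is independent of $y$. Next, for an arbitrary $u\in\H_+$ I would fix a reference set $\RR$ and a weight $a=(a_k)_{k\in\N}$ so that a suitable scalar multiple of $u$ lies in the cap $\H_a^1$ defined by \eqref{e-def-Ha1}, and then invoke Choquet's theorem exactly as in the proof of Theorem~\ref{th-main-2} (using \eqref{e-exH}) to write
\[
   u(x,y,t)=\int_{\exr\H_+\cap\H_a^1}v(x,y,t)\,d\mu(v)
\]
for some probability measure $\mu$. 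Since every $v$ in the support of $\mu$ is independent of $y$, so is the integral, which yields the first assertion.

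Once $y$-independence is established, $X_0u=\sum_{j=1}^m x_j\partial_{y_j}u\equiv 0$, so $\L u=0$ collapses to $\partial_t u-\sum_{j=1}^m\partial_{x_j}^2u=0$ on $\R^m\times\R_T$, i.e.\ the classical heat equation. Strict positivity of any nontrivial $u\in\H_+$ is then immediate either from the strong minimum principle for the heat equation, or equivalently from Bony's strong maximum principle recalled in Section~\ref{sec_Cauchy}, since $\L$-admissible paths reach a dense subset of $\{t<t_0\}$ from any $z_0=(x_0,y_0,t_0)$ with positive value.

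Finally, for uniqueness of the positive Cauchy problem, let $u_1,u_2$ be two nonnegative solutions on $S_T=\R^{2m}\times\,]0,T[$ with the same continuous initial datum $u_0\ge 0$. By the first assertion each $u_j$ is $y$-independent, hence so is the common trace $u_0$, and both $u_j$ solve the positive Cauchy problem for the heat operator $\partial_t-\Delta_x$ on $\R^m\times\,]0,T[$ with initial datum $u_0$; Theorem~\ref{th-main-2} (which applies to the heat operator, its drift being zero and (H0'), (H1') holding) then gives $u_1=u_2$. The conceptual crux of the proposal is the passage from extremal to arbitrary nonnegative solutions via Choquet integration; Proposition~\ref{p_Kolmo} already does the hard analytic work of pinning down the extremals, and no further estimates on the fundamental solution are needed, which is the main advantage of this approach over methods based on sharp two-sided bounds for $\Gamma$.
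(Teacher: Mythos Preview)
Your argument for the first assertion (independence of $y$ for solutions on $\OT$) and for the reduction to the heat equation is correct and is exactly the route the paper takes: Maeda's identification of extremals with Martin kernels, Proposition~\ref{p_Kolmo} giving the explicit exponential form of the latter, and then the Choquet representation from Section~\ref{sec_funct}. Strict positivity also follows cleanly from the representation itself, since every nonzero extremal $\exp(\langle x,v\rangle+t\|v\|^2)$ is strictly positive; your alternative via Bony's maximum principle only gives vanishing for $t\le t_0$ and needs an extra step to propagate forward in time.

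The uniqueness argument, however, has a genuine gap. You invoke ``the first assertion'' to conclude that each $u_j$, defined on $S_T=\R^{2m}\times\,]0,T[$, is $y$-independent; but the first assertion is proved only for solutions on $\OT=\R^{2m}\times\,]\!-\infty,T[$, and the Choquet/Martin machinery of Section~\ref{sec_funct} and Proposition~\ref{p_Kolmo} is set up on $\OT$, not on a finite strip. On $S_T$ the Martin boundary contains, in addition, the kernels $\Gamma(\,\cdot\,;\xi,\eta,0)$ with pole on the bottom $\{t=0\}$, and these \emph{do} depend on $y$; so the analogue of Proposition~\ref{p_Kolmo} fails on $S_T$ and you cannot transfer $y$-independence directly. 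The correct route is the one used in the proof of Theorem~\ref{th-main-2}: reduce via Corollary~\ref{cor_uniquenesspositive} to $u_0=0$, extend by $0$ to $\OT$, and then apply the first assertion (or simply the strict positivity of the nonzero extremals) on $\OT$ to conclude that the extension, hence $u$, vanishes identically.
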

The uniqueness of the positive Cauchy problem in $S_T$ for the Kolmogorov equation was first
proved in \cite{Polidoro1995} by a different method.

\begin{proof}[Proof of Proposition~\ref{p_Kolmo}] Assume, as it is not restrictive, that $T=0$, let $u$ be a Martin
functions of $\L$ in $\OT$, and let $(x,y,t) \in \OT$. In order to prove our claim, we preliminarily note that
\begin{equation} \label{eq-firstterm-Martin}
  - \frac{1}{4}\left( \frac{\|x-\xi_k\|^2}{t - \tau_k} - \frac{\|\xi_k\|^2}{- \tau_k}  \right) =
  - \frac{1}{4}\left( \frac{\|x \|^2}{t - \tau_k} - 2 \frac{\langle x, \xi_k \rangle }{t - \tau_k}
- t \frac{\|\xi_k\|^2}{(t - \tau_k)(- \tau_k) } \right),
\end{equation}
and that
\begin{multline} \label{eq-secondterm-Martin}
   - 3 \left( \frac{ \| y - \eta_k + \tfrac{t- \tau_k}{2} (x +\xi_k)\|^2 }{(t - \tau_k)^3} - \frac{ \| \eta_k +
\tfrac{\tau_k}{2} \xi_k\|^2 }{(- \tau_k)^3} \right) = \\
  \qquad - 3 \frac{ \| y  + \tfrac{t}{2} x\|^2 }{(t - \tau_k)^3}
 - \frac{3}{4} \frac{\| t \xi_k - \tau_k x\|^2}{(t - \tau_k)^3}
 + 6 \frac{\langle y  + \tfrac{t}{2} x, \eta_k + \tfrac{\tau_k}{2} \xi_k \rangle}{(t - \tau_k)^3} \\
  \qquad - 3 \frac{\langle y  + \tfrac{t}{2} x, t \xi_k - \tau_k x \rangle}{(t - \tau_k)^3}
 + 3 \frac{\langle \eta_k + \tfrac{\tau_k}{2} \xi_k, t \xi_k - \tau_k x \rangle}{(t - \tau_k)^3} \\
  \qquad + \left( 9 t \tau_k^2 - 9 t^2 \tau_k + 3 t^3 \right) \frac{ \| \eta_k + \tfrac{\tau_k}{2}
\xi_k\|^2 }
 {(t -\tau_k)^3(- \tau_k)^3}\,.
\end{multline}

We next choose a fundamental sequence $\big((\xi_k, \eta_k, \tau_k)\big)_{k \in \N}$ such that $u(x,y,t) = 0$ for every
$(x,y,t) \in \OT$. We fix any vector $w \in \R^m$ such that $w \ne 0$, and we set $(\xi_k, \eta_k, \tau_k) = (k w, 0,
-1)$. Since $\Gamma(x, y, t, \xi, \eta, \tau) = 0$ if $t \le \tau$, we have $u_k(x,y,t)=0$ whenever $t < -1$. A direct
computation based on \eqref{eq-firstterm-Martin} and \eqref{eq-secondterm-Martin} shows that $u_k(x,y,t) \to 0$ also
if $-1 < t < 0$. We then conclude that $u = 0$ in $\OT$.

Note that, we find the trivial solution whenever a bounded subsequence of $\big(\tau_{k}\big)_{k\in \N}$ exists.
Indeed, let $\big(\tau_{k_j}\big)_{j\in \N}$ be a convergent subsequence of $\big(\tau_{k}\big)_{k\in \N}$, and denote
by  $\widetilde \tau \in ]- \infty, T]$ its limit. Let $(x,y,t) \in \R^{2m+1}$ be fixed, with $t <  \widetilde \tau$.
Then there exists a $J \in \N$ such that $\tau_{k_j} > t$, so that $u_{k_j}(x,y,t)= 0$ for every $j > J$. Thus
$u(x,y,t) = 0$ for every $(x,y,t)$ such that $t < \tilde \tau$. This proves the  claim if $\tilde \tau = T$. If
$\tilde \tau > T$ the uniqueness of the positive Cauchy problem for Kolmogorov equations (see Theorem 3.2 in
\cite{Polidoro1995}) implies that $u(x,y,t) = 0$ also when $\tilde \tau < t <T$. For this reason, in the sequel we
will always assume that $\tau_k \to - \infty$ as $k \to + \infty$.

We next show that \emph{nontrivial} Martin functions of $\L$ have the form \eqref{eq-exp-Kolmo}. We fix $w_1, w_2 \in
\R^m$ and we set $(\xi_k, \eta_k, \tau_k) = (2 k w_1, k^2 w_2, -k)$. A direct computation based on
\eqref{eq-firstterm-Martin} shows that
\begin{equation} \label{eq-martin-xk}
  - \frac{1}{4}\left( \frac{\|x-\xi_k\|^2}{t - \tau_k} - \frac{\|\xi_k\|^2}{- \tau_k}  \right) \to
  \langle x, w_1 \rangle + t \|w_1\|^2  \qquad \text{as} \quad k \to \infty.
\end{equation}
A similar argument, based on \eqref{eq-secondterm-Martin}, applies to last term in the exponent of
\eqref{e-KolmogorovFS}. We have
\begin{equation*}
    \eta_k + \tfrac{\tau_k}{2} \xi_k = k^2 \left( w_2- w_1 \right), \qquad
    t \xi_k- \tau_k x =  k \left(2 t w_1 - x \right),
\end{equation*}
then
\begin{equation*}
    y - \eta_k + \tfrac{t- \tau_k}{2} (x +\xi_k) =
    - k^2 \left( w_2- w_1 \right) + k  \left( t w_1 - \tfrac12 x\right) + y +\tfrac{t}{2} x.
\end{equation*}
Consequently, we find that
\begin{equation*}
\begin{split}
  & - 3 \left( \frac{\|y - \eta_k + \tfrac{t- \tau_k}{2} (x +\xi_k)\|^2}{(t - \tau_k)^3} -
  \frac{\| \eta_k + \tfrac{\tau_k}{2} \xi_k\|^2}{(- \tau_k)^3}  \right) = \\
  & - 3 \frac{-k^6 \langle w_2 - w_1 , 2 t w_1 + x \rangle - 3 t k^6 \|w_1 - w_2\|^2}{k^3(t + k)^3} + \omega(k),
\end{split}
\end{equation*}
for some function $\omega$ such that $\omega(k) \to 0$ as $k \to \infty$. Hence,
\begin{equation} \label{eq-martin-yk}
  - 3 \left( \frac{\|y - \eta_k + \tfrac{t- \tau_k}{2} (x +\xi_k)\|^2}{(t - \tau_k)^3} -
  \frac{\| \eta_k + \tfrac{\tau_k}{2} \xi_k\|^2}{(- \tau_k)^3}  \right) \to
  3 \langle w_2 - w_1 , 2 t w_1 + x \rangle + 9 t \|w_1 - w_2\|^2,
\end{equation}
as $k \to \infty$. Note that the variable $y$ doesn't appear in last limit. Thus, also using the
obvious fact $\left(\frac{-\tau_k}{t - \tau_k}\right)^{2m} \to 1$ as $k \to \infty$, we find
\begin{equation*}
  u(x,y,t) = \exp \left( \langle x, 3 w_2 -2 w_1 \rangle + t \|3 w_2 -2 w_1\|^2 \right),
\end{equation*}
and we conclude that $u$ has the form \eqref{eq-exp-Kolmo} if we choose $v = 3 w_2 -2 w_1$.

We next show that either $u$ is zero, or has the form \eqref{eq-exp-Kolmo}, for every fundamental sequence. With this
aim, we consider any sequence $(\xi_k, \eta_k, \tau_k)_{k\in \N}$, with $\tau_k <0$ for every $k \in \N$, and
such that $\tau_k \to - \infty$ as $k \to  + \infty$, since we know that, otherwise, $u$ is the trivial solution.
We also assume that the function $u$ in \eqref{eq-MBlim} is well defined.

We set
\begin{equation} \label{eq-ratio}
 \widetilde \xi_k := \tfrac{1}{-\tau_k} \xi_k , \qquad \widetilde \eta_k :=  \tfrac{1}{(-\tau_k)^2} \eta_k,
 \quad k \in \N.
\end{equation}
and, after some elementary, but lengthly computations, we find that
\begin{equation} \label{eq-xi-eta-k}
  u_{k}(x,y,t)=\exp\left( \left(\langle x,3 \tilde{\eta}_{k}-\tilde{\xi}_{k}\rangle
+t\| 3 \tilde{\eta}_{k}-\tilde{\xi}_{k}\|^{2} \right)(1 +R_{k})\right),
\end{equation}
where $R_{k}\to 0$ denotes a vanishing sequence. Thus, either $\big\| 3 \tilde{\eta}_{k}-\tilde{\xi}_{k} \big\|
\to + \infty$ as $k \to + \infty$, or the sequence $\big( 3 \tilde{\eta}_{k}-\tilde{\xi}_{k} \big)_{k \in \N}$ has a
bounded subsequence.

In the first case we plainly find $u(x,y,t) = 0$ for every $(x,y,t) \in \R^{m+1}$ with $t<0$.

In the second case there exists a subsequence $\big( 3 \tilde{\eta}_{k_j}-\tilde{\xi}_{k_j} \big)_{j \in \N}$
converging to some point $w \in \R^{m}$. From \eqref{eq-xi-eta-k} we have that
\begin{equation*}
  u(x,y,t) = \exp \left( \langle x, w \rangle + t \| w \|^2 \right),
\end{equation*}
and hence,  $u$ has the form \eqref{eq-exp-Kolmo}. This concludes the proof.
\end{proof}

\mysection{Concluding remarks and further developments}\label{sec_further}
As was stressed in Remark~\ref{r-separation}, our separation principle (Theorem~\ref{th-main}) gives a
valuable information concerning nonnegative solutions for operators $\L$ of the form
\begin{equation*}
  \L u = \partial_t u - \sum_{j=1}^m X_j^2 u,
\end{equation*}
and for Mumford's operator $\M$
\begin{equation*}
    \mathscr{M} u := \p_{t}  u - \cos(x) \p_y u - \sin(x) \p_w u - \p_x^2 u.
\end{equation*}
On the other hand, in recent years, operators of the form \eqref{e1} with $X_0\neq 0$ that satisfy (H0), (H1) and
(H2) have received considerable attention. It would be interesting to study their positivity properties using our functional analytic approach. We give here two
examples of such operators.

\begin{example}\label{ex3} {\sc Linked operators.}
Let $(\p_x + y \p_s)^2 + (\p_y - x \p_s)^2$ be the sub-Laplacian on the Heisenberg group given by \eqref{e-HG}, and let
$ x \p_w -\p_{t}$ be the first order term of the simplest Kolmogorov operator
\eqref{eq-Kolmogorov-md}, that is
\begin{equation*}
\L := \p_{t} - x \p_w - \p_x ^2 \qquad (x,w,t) \in \R^3.
\end{equation*}
Define
\begin{equation}\label{exlink}
\L := \p_{t} - x \p_w - (\p_x + y \p_s)^2 - (\p_y - x \p_s)^2  \qquad (x,y,s,w, t) \in \R^5.
\end{equation}
Note that the operator $\L$ acts on the variables $(x,y,s,t)$ as the heat equation on the Heisenberg group, and on the
variables $(x,y,w,t)$ as a Kolmogorov operator in $\R^3 \times \R$. It is easy to see that $\L$ satisfies the
H\"ormander condition. Moreover, it can be shown that there exists a homogeneous Lie group on $\R^5$ that \emph{links}
the Heisenberg group on $\R^4$ and the Kolmogorov group in $\R^3$, and such that $\L$ is invariant with respect to
this new Lie group.

The notion of a {\it link of homogeneous groups} has been introduced by Kogoj and Lanconelli in \cite{KogojLanconelli2,
KogojLanconelli4}. It gives a general procedure for the construction of sequences of homogeneous groups of arbitrarily
large dimension and step.
\end{example}

\begin{example}\label{ex4} Consider the following operator studied by Cinti, Menozzi and Polidoro
\cite{CintiMenozziPolidoro}
\begin{equation} \label{ex-CMP}
    \L u = \p_{t}  u - x \p_w u - x^2 \p_y u - \p_x^2 u   \qquad (x,y,w,t) \in \R^4.
\end{equation}
It is invariant with respect to the following Lie group operations
\begin{equation} \label{eq-group-CMP}
 (x,y,w,t) \circ (\x,\y,\omega,\t): = (x + \x, y +\eta +2 x \omega - \t x^2, w +\omega-\t x, t+\t),
\end{equation}
and verifies H\"ormander hypoellipticity condition, so, (H0) and (H1) are satisfied. Note that, in this case, the
drift term $X_0 := x^2 \p_y + x \p_w$ is essential for the validity of (H0). $\L$ is also invariant with respect to the
following dilation
\begin{equation} \label{eq-dil-CMP}
 \d_r (x,y,w,t): = \big(r x, r^{4}  y, r^3 w, r^2 t \big).
\end{equation}
We next show that the attainable set of the point $z_0 =(x_0, y_0, w_0, t_0)$ in $\R^4$ is
\begin{equation} \label{eq-propset}
  \A_{z_0} = \big\{ (x,y,w,t) \in \R^4 \mid t\le t_0, y_0\le y, (w-w_0)^2\le (y - y_0 ) (t_0-t) \big\}.
\end{equation}
To prove \eqref{eq-propset}, we recall that in \cite[Lemma 5.11]{CintiMenozziPolidoro} it has been shown that, if $z_0
= 0 \in \R^4$, and $\O = \,\big( ]-1,1[ \big)^4$ is the open unit cube in $\R^4$, then
\begin{equation*}
  \A_{0} (\Omega)  = \big\{ (x,y,w,t) \in \Omega \mid 0 \le y \le -t, w^2 \le - t y \big\}.
\end{equation*}
In accordance with \eqref{eq-dil-CMP}, we consider the $r$ dilation of $\Omega$
$$\d_r \Omega = \, \, ]\!-r, r[ \, \, \times \, \,  ]\!-r^4, r^4[  \,\,
\times  \,\,  ]\!-r^3, r^3[  \,\,  \times \, \,  ]\!-r^2, r^2[ \,.$$
By the dilation invariance of $\L$, we then have
\begin{equation*}
  \A_{0} = \bigcup_{r > 0} \A_{0} (\d_r \Omega) = \bigcup_{r > 0} \big\{ (x,y,w,t) \in \d_r \Omega
\mid  0 \le y \le - r^2 t, w^2 \le - t y \big\},
\end{equation*}
and we get \eqref{eq-propset} for $z_0 = 0$. Eventually, \eqref{eq-propset} for any $z_0 \in \R^4$ follows
from the invariance of $\L$ with respect to the translations defined in \eqref{eq-group-CMP}.

Note that the point $\exp\left( s Y \right) z_0 \not \in$ Int$(\A_{z_0})$, where $Y=x^2 \p_y + x \p_w-\p_t$ is defined
by \eqref{eY}. Since $\A_{z_0}(\Omega) \subset \A_{z_0}$, for every bounded set $\Omega \subset \R^4$, we
conclude that  (H2) is not satisfied if we choose $\omega = 0$. Nevertheless, $\L$ defined in \eqref{ex-CMP}
satisfies assumption (H2), for any $\omega \ne 0$ provided that we choose $\Omega$ big enough.
\end{example}

We note that the operator $\L$ in \eqref{ex-CMP} is an approximation of the Mumford operator \eqref{ex-Mum}.
Indeed, the Taylor expansion at $x=0$ of the drift term $X_0 = \cos(x) \p_y + \sin(x) \p_w$, leads us to approximate
$\mathscr{M}$ with
\begin{equation*}
    \widetilde  {\mathscr{M}} = \p_{t} - \left( 1 - \tfrac{x^2}{2}\right) \p_y - x \p_w - \p_x^2 .
\end{equation*}
Moreover, it can be easily checked that $u$ is a solution of the equation $\L u = 0$ (where $\L$ is the
operator defined by \eqref{ex-CMP}) if and only if the function $v(x,y,w,t) :=  u\left(x,-\frac{y}{2}-
t,w,t\right)$ is a solution of the equation $\widetilde  {\mathscr{M}} v = 0$, and the claim is verified.

\subsection{On the separation principle} \label{s-remark}

We discuss here the main assumption \eqref{eq_right_inv} of Theorem \ref{th-main}. We recall that it is satisfied
whenever $X_0 = 0$, and therefore, it is natural to study operators with $X_0\neq 0$ and a non-abelian $\mathbb{G}$
that still satisfy \eqref{eq_right_inv}. In order to discuss this question, we focus on the consequence of
\eqref{eq_right_inv}, that is
\begin{equation} \label{eq_funct_eq-reminder}
    u\left(\exp(s (\omega \cdot X +Y))(x,t)\right) = \mathrm{e}^{- \beta s} u(x,t) \quad \forall (x,t) \in \OT \mbox{ and }  \forall s > 0,
\end{equation}
where $u$ is a nonnegative extremal solution. The following result answers this question

\begin{proposition} \label{th-comm}
Let $\L$ be an operator of the form \eqref{e1}, satisfying {\rm (H0)}, {\rm (H1)} and {\rm (H2)}. Let $u: \OT \to \R$ be
a nonnegative smooth function satisfying \eqref{eq_funct_eq-reminder}. Then
\begin{equation} \label{eq_comm-jk0}
    [X_j, X_k ] u(x,t) = 0, \qquad \forall j,k = 0, 1, \dots, m, \mbox{ and } \forall (x,t)\in \OT.
\end{equation}
The same result holds for all higher-order commutators.

Moreover, if any nonnegative extremal solution in $\H_+$ satisfies \eqref{eq_funct_eq-reminder}, then the conclusion
\eqref{eq_comm-jk0} holds for any $u\in\H_+$.
\end{proposition}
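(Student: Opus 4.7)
The plan is to combine the infinitesimal form of \eqref{eq_funct_eq-reminder} with Proposition~\ref{prop-separation} applied to Baker--Campbell--Hausdorff (BCH) compositions that realize translations in commutator directions. I shall implicitly take $u \in \H_+$, i.e.\ $\L u = 0$; otherwise the conclusion fails, as one sees on the Heisenberg group by considering $u(x,y,z',t) = e^{\beta t}\bigl((z' + xy/2)^2 + 1\bigr)$, which is smooth and positive and satisfies the functional equation yet has $\p_{z'} u \neq 0$.

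First I would differentiate \eqref{eq_funct_eq-reminder} in $s$ at $s = 0$. Since the left-invariant vector field $V := \omega \cdot X + Y$ generates the flow $z \mapsto z \circ \exp(s(\omega \cdot X + Y))$ (by \eqref{e-gcirc} together with the identification of the flow of a left-invariant field with right-translation by its one-parameter subgroup), this yields the eigenfunction identity $V u = -\beta u$. Iterating gives $V^n u = (-\beta)^n u$ for every $n$, supplying the algebraic link between $u$ and the Lie-algebra action.

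Second, by the BCH formula a carefully chosen product $\exp(s_k(\omega_k \cdot X + Y)) \cdots \exp(s_1(\omega_1 \cdot X + Y))$ equals, to leading order in a small parameter $s$, right-translation along a prescribed commutator $[X_j, X_k]$ or any deeper nested bracket, with scaling $s \mapsto s^n$ for some $n \geq 2$. Applying Proposition~\ref{prop-separation} to such compositions---whose solution-preserving property propagates from \eqref{eq_funct_eq-reminder} through $\L u = 0$---and dividing out the elementary $V$-flow, one obtains for each commutator direction $W$ a relation $u(z \circ \exp(c s^n W)) = e^{\gamma s} u(z)$. Reparameterizing $\xi = c s^n$ gives $u(z + \xi W) = \exp\bigl(\gamma (\xi/c)^{1/n}\bigr) u(z)$, which, since $u$ is smooth by H\"ormander hypoellipticity, forces $\gamma = 0$ whenever $n \geq 2$. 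Hence $W u = 0$ for every $W$ in the derived subalgebra, which is precisely \eqref{eq_comm-jk0} and its higher-order analogues.

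The moreover statement follows from the Choquet integral representation of Section~\ref{sec_funct}: every $u \in \H_+$ decomposes as $u = \int v\, d\mu(v)$ over extremals $v \in \exr \H_+$, and if each such $v$ satisfies \eqref{eq_funct_eq-reminder}, then by the first part $[X_j, X_k] v = 0$ for every $v$ in the support of $\mu$, giving $[X_j, X_k] u = 0$ by integration. The principal obstacle is the justification in the second step that the BCH-composite right-translations preserve solutions of $\L u = 0$, so that Proposition~\ref{prop-separation} applies; since \eqref{eq_funct_eq-reminder} is stated for a single $\omega$, this likely requires working with extremal $u$ via Choquet up front and then using $\L u = 0$ together with the eigenfunction identity $Vu = -\beta u$ to close the Lie algebra.
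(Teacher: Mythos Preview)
Your route through Proposition~\ref{prop-separation} has the gap you yourself identify: that proposition requires the BCH-composite right translation to send \emph{every} element of $\H_+$ to a solution (hypothesis~\eqref{eq_right_inv-2}), and nothing in the functional equation~\eqref{eq_funct_eq-reminder} for the single function $u$ supplies this. The vague suggestion to ``close the Lie algebra'' via $Vu=-\beta u$ does not constitute a proof, and in the generality of the proposition (arbitrary $\mathbb{G}$ satisfying (H0)--(H2), with no Carnot last-layer commutativity as in Theorem~\ref{H*-lambda-repr-par}) there is no reason for right translations to preserve $\H_+$.

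The paper bypasses Proposition~\ref{prop-separation} entirely. Its Lemma~\ref{lem-comm} observes that if~\eqref{eq_funct_eq-reminder} holds for two vectors $\omega_1,\omega_2$ (each with its own exponent $\beta_{\omega_i}$), then for $v:=\log u$ the four-fold product
\[
\exp\!\big(\!-s(\omega_2\!\cdot\! X+Y)\big)\exp\!\big(\!-s(\omega_1\!\cdot\! X+Y)\big)\exp\!\big(s(\omega_2\!\cdot\! X+Y)\big)\exp\!\big(s(\omega_1\!\cdot\! X+Y)\big)
\]
leaves $v$ \emph{exactly} invariant, because the $\beta$-contributions cancel in pairs. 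BCH then gives $[\omega_1\!\cdot\! X+Y,\,\omega_2\!\cdot\! X+Y]\,u=0$ directly --- no extremality, no $\L u=0$. The proposition follows by taking $\omega_2=\omega$ and $\omega_1=\omega+re_k$ (both satisfy (H2) for small $r>0$ since (H2) is an open condition on $\omega$) to get $[X_k,\omega\!\cdot\! X+Y]u=0$, and then $\omega_1=\omega+re_j$, $\omega_2=\omega+re_k$ to get $[X_j,X_k]u=0$; the $X_0$-commutators fall out by linearity.

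Two corrections. First, \eqref{eq_funct_eq-reminder} is to be read as holding for every $\omega$ satisfying (H2), not a single one; the paper's proof perturbs $\omega$ throughout. Second, under that reading your Heisenberg counterexample fails: along the flow of $\omega\cdot X+Y$ with $\omega=(a,b)$ one computes $z+\tfrac{xy}{2}\mapsto z+\tfrac{xy}{2}+sbx+\tfrac{ab}{2}s^2$, so $e^{\beta t}\bigl((z+\tfrac{xy}{2})^2+1\bigr)$ does \emph{not} satisfy the functional equation when $b\ne 0$. Hence there is no need to impose $\L u=0$. Your handling of the ``moreover'' clause via the Choquet representation is correct and matches the paper.
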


As an application, we apply the above result to the degenerate Kolmogorov equations in two space variables $\K := \partial_t - x \partial_y
- \partial_x^2$, and let $\H_+$ the corresponding cone of nonnegative solutions in $\R^2 \times ]- \infty, T[$ . In this case $X_1 = \partial_x, X_0 = x \partial_y$, and Proposition \ref{th-comm} says that, if $u$ is
a nontrivial nonnegative extremal solution in $\H_+$ that satisfies
\eqref{eq_funct_eq-reminder}, then
\begin{equation*}
  [X_1, X_0]u(x,y,t) = [\partial_x, x \partial_y]u(x,y,t) = \partial_y u(x,y,t) =0.
\end{equation*}
Hence, $u$ does not depend on $y$. Therefore, $u$ is a nontrivial nonnegative solution of the heat equation $\partial_t
u = \partial_x^2 u$ in $\R\times ]-\infty,T[$, and in particular $u$ is strictly positive.

% An analogous statement holds when considering Kohn-Laplace operator on the Heisenberg group $\Delta_\mathbb{H} =
% \left(\partial_x - \tfrac{y}2 \partial_z\right)^2 + \left(\partial_y + \tfrac{x}2 \partial_z\right)^2$, with $(x,y,z)
% \in \R^3$. In this case $\L = \Delta_\mathbb{H} - \partial_t$,
% \begin{equation*}
%   \left[\partial_x - \tfrac{y}2 \partial_z, \partial_y + \tfrac{x}2 \partial_z \right] = \partial_z.
% \end{equation*}
% and we find again that any nontrivial nonnegative extremal solution $u$, satisfying \eqref{eq_funct_eq}, is strictly
% positive and does not depend on $z$. Also in this case  $u$ is a positive solution of the heat equation $\partial_x^2 u
% + \partial_y^2 u = \partial_t u$ in $\R^2\times ]-\infty,T[$.

In conclusion, all nontrivial nonnegative extremal solutions in $\H_+$ satisfying \eqref{eq_funct_eq-reminder}, do not
depend on the 'degenerate' variable $y$. Recall that in fact, by Corollary~\ref{c_Kolmo},  all solutions in $\H_+$ do
not depend on $y$.

\medskip

Next, we present the proof of Proposition~\ref{th-comm}. It relies on the following Lemma, whose proof is analogous to that
of Theorem \ref{H*-lambda-repr-par}.

\begin{lemma} \label{lem-comm} Let $u: \OT \to \R$ be a nonnegative smooth function, and $\omega_1, \omega_2$ be two
vectors of $\R^m$ such that \eqref{eq_funct_eq-reminder} holds. Then, for every $(x,t) \in \OT$, we have
\begin{equation*} %\label{eq_funct_h-j}
    \big[\omega_1 \cdot X + Y,\omega_2 \cdot X + Y \big] u(x,t) = 0.
\end{equation*}
\end{lemma}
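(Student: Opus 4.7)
Write $Z_i := \omega_i \cdot X + Y$ for $i=1,2$, so that by hypothesis there exist constants $\beta_1,\beta_2 \in \R$ with
\begin{equation*}
  u(\exp(s Z_i)(x,t)) = \mathrm{e}^{-\beta_i s} u(x,t) \qquad \forall (x,t) \in \OT,\ \forall s>0.
\end{equation*}
The plan is to build a small commutator loop based at $(x,t)$ whose first nontrivial term is $s^2[Z_1,Z_2]$, apply $u$ around that loop, and read off $[Z_1,Z_2]u(x,t)=0$ by Taylor expansion.

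First I would extend the multiplicative identity to $s\le 0$. For $s<0$ with $|s|<T-t$, set $r:=-s>0$ and $(y,\sigma):=\exp(sZ_i)(x,t)$; note $(y,\sigma)\in\OT$ since $\sigma=t+r<T$. Applying the given identity at $(y,\sigma)$ with parameter $r$ and using $\exp(rZ_i)(y,\sigma)=(x,t)$, one obtains $u(\exp(sZ_i)(x,t))=\mathrm{e}^{-\beta_i s}u(x,t)$ for all sufficiently small $s\in\R$. Second, define the commutator loop
\begin{equation*}
  F_s := \exp(-sZ_2)\circ\exp(-sZ_1)\circ\exp(sZ_2)\circ\exp(sZ_1).
\end{equation*}
Because the $\partial_t$-components of the four factors cancel, $F_s$ preserves the time coordinate, and for small $|s|$ the whole trajectory remains in $\OT$. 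Applying the extended identity four times yields the cancellation
\begin{equation*}
  u(F_s(x,t)) = \mathrm{e}^{+\beta_2 s}\mathrm{e}^{+\beta_1 s}\mathrm{e}^{-\beta_2 s}\mathrm{e}^{-\beta_1 s}u(x,t)=u(x,t).
\end{equation*}

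Third, I would Taylor-expand $u\circ F_s$ around $s=0$ and identify the coefficient of $s^2$. Using the identity $u\circ\exp(sZ)(p)=\mathrm{e}^{sZ}u(p)$ (in the sense of the formal series for the differential operator $\mathrm{e}^{sZ}$ acting on smooth functions), the composition unwinds to
\begin{equation*}
  u\circ F_s(p) = \mathrm{e}^{sZ_1}\mathrm{e}^{sZ_2}\mathrm{e}^{-sZ_1}\mathrm{e}^{-sZ_2}u(p),
\end{equation*}
and the Baker--Campbell--Hausdorff formula (exactly as invoked in the proof of Theorem~\ref{H*-lambda-repr-par}) gives
\begin{equation*}
  \mathrm{e}^{sZ_1}\mathrm{e}^{sZ_2}\mathrm{e}^{-sZ_1}\mathrm{e}^{-sZ_2} = \mathrm{Id}+s^2[Z_1,Z_2]+O(s^3).
\end{equation*}
Combining this expansion with the equality $u(F_s(p))=u(p)$ from the previous step,
\begin{equation*}
  0 = u(F_s(p))-u(p) = s^2\,[Z_1,Z_2]u(p) + O(s^3),
\end{equation*}
and dividing by $s^2$ and letting $s\to 0$ yields $[Z_1,Z_2]u(p)=0$, which is the desired conclusion.

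The step that requires most care is the BCH expansion: strictly speaking it is a formal-series identity, so I would either invoke the standard differential-geometric fact that $F_s(p)=p+s^2[Z_1,Z_2]_p+O(s^3)$ in local coordinates and then Taylor-expand the smooth function $u$, or verify the $s^2$-coefficient directly by computing $\frac{d^2}{ds^2}\big|_{s=0}u(F_s(p))$ with two applications of the chain rule. Either route is routine, but keeping track of signs and of the reversed order of composition induced by pulling back (so that $Z_1$ acts outermost in $\mathrm{e}^{sZ_1}\mathrm{e}^{sZ_2}$ although $\exp(sZ_1)$ is applied first) is the only delicate bookkeeping point. Everything else reduces to the hypothesis, its extension to negative $s$, and the time-cancellation built into $F_s$.
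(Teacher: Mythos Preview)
Your proof is correct and follows essentially the same approach as the paper's: build the commutator loop $\exp(-sZ_2)\exp(-sZ_1)\exp(sZ_2)\exp(sZ_1)$, observe that the exponential factors cancel so $u$ is unchanged around the loop, and read off $[Z_1,Z_2]u=0$ from the Baker--Campbell--Hausdorff expansion. The only cosmetic difference is that the paper passes to $v=\log u$ (turning the multiplicative cancellation into an additive one) whereas you work directly with $u$; your version is arguably cleaner since it avoids any worry about points where $u$ vanishes, and your explicit extension of the functional equation to small negative $s$ makes precise a step the paper leaves implicit.
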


\begin{proof} Let $(x,t) \in \OT$, and consider the function $v := \log(u)$. Using \eqref{eq_funct_eq-reminder} with $s > t-T$, we obtain
\begin{multline*} %\label{eq_funct_h-j}
    v \!\Big( \!\! \exp(-s (\omega_2 \cdot X\! +\!Y))\! \exp(-s (\omega_1 \cdot X \!+\!Y))
             \exp(s (\omega_2 \cdot X \!+\!Y)) \exp(s (\omega_1 \cdot X \!+\!Y))(x,t)\!\!\Big)\!\! =\\
              s \beta_{\omega_1} + s \beta_{\omega_2} - s \beta_{\omega_1} - s \beta_{\omega_2} +
v(x,t) = v(x,t).
\end{multline*}
We recall the Baker-Campbell-Hausdorff formula
\begin{equation*} %\label{eq_funct_h-j}
\begin{split}	\exp\big(s\tilde Y\big) \exp\big(s\tilde X \big)(x,t) = \exp\left(s
\big(\tilde Y + \tilde X \big) + \tfrac{s^2}{2}\big[ \tilde X, \tilde Y \big] + o(s^2) \right)
\end{split}
\end{equation*}
where $o(s^2)$ denotes a function such that $o(s^2)/s^2 \to 0$ as $s \to 0$, and we apply it twice. The first time we
choose $\tilde X = \omega_1 \cdot X +Y$ and $\tilde Y = \omega_2 \cdot X +Y$, the second time we set $\tilde X =
- \big(\omega_1 \cdot X +Y\big)$ and $\tilde Y = - \big(\omega_2 \cdot X +Y\big)$, and we find
\begin{equation*} %\label{eq_funct_h-j}
    \frac{v \Big( \exp\big(s^2 \big[\omega_1 \cdot X +Y,\omega_2 \cdot X +Y \big] + o(s^2) \big) (x,t)\Big) -
v(x,t)}{s^2}= 0,
\end{equation*}
for every $s > t-T$. Then from the differentiability of the functions $v$ and $\exp$, by letting
$s \to 0$ we obtain
\begin{equation*} %\label{eq_funct_h-j}
    \big[\omega_1 \cdot X +Y,\omega_2 \cdot X +Y \big] v(x,t) = 0.
\end{equation*}
The proof of the claim then follows from the fact that $u(x,t) = \exp \big(v(x,t)\big)$.
\end{proof}

\begin{proof}[Proof of Proposition~\ref{th-comm}]

Let $u: \OT \to \R$ be a nonnegative smooth function, and let $\omega \in \R^m$ be any vector satisfying (H2). We
claim that
\begin{equation} \label{eq_comm-tilde}
    \big[X_k , \omega \cdot X + Y \big] u(x,t) = 0 \qquad k=1, \dots, m,
\end{equation}
for every $(x,t) \in \OT$.

In order to prove \eqref{eq_comm-tilde} we note that, since $\exp\big( s (\omega \cdot X + Y) \big)(x,t) \in
\mathrm{Int}  \left(\A_{(x,t)} \right)(\Omega)$ for any $s \in ]0,s_0[$, there exists $r > 0$ such that
$\exp\big( s \big(\omega \cdot X + r X_k + Y \big) \big)(x,t) \in \mathrm{Int} \left(\A_{(x,t)}(\Omega) \right)$
for $k = 1, \dots, m$. We denote by $e_k$ the $k$-th vector of the canonical basis of $\R^m$, and we apply
Lemma \ref{lem-comm} with $\omega_1 := \omega + r e_k$ and $\omega_2 := \omega$. We find
\begin{equation*} %\label{eq_funct_h-j}
    r \big[X_k , \omega \cdot X + Y \big] u(x,t) = \big[\omega \cdot X + r X_k + Y, \omega \cdot X + Y \big] u(x,t) = 0,
\end{equation*}
for every $(x,t) \in \OT$. This proves \eqref{eq_comm-tilde}.

We apply again Lemma \ref{lem-comm} with $\omega_1 := \omega + r e_k$ and $\omega_2 := \omega + r e_j$, for $j,k = 1,
\dots, m$, and \eqref{eq_comm-tilde} to obtain
\begin{equation*} %\label{eq_funct_h-j}
\begin{split}
     r^2 \big[X_j , X_k \big]u(x,t) & = \big[r X_j + \omega \cdot X + Y, r X_k + \omega \cdot X + Y \big] u(x,t) \\
     & - r \big[X_j, \omega \cdot X + Y \big] u(x,t) + r \big[ X_k, \omega \cdot X + Y \big] u(x,t) = 0.
\end{split}
\end{equation*}
 This proves
\begin{equation} \label{eq_comm-jk}
   \big[X_j , X_k \big] u(x,t) = 0 \qquad j,k=1, \dots, m.
\end{equation}
From \eqref{eq_comm-jk} and from the fact that $\big[X_k, \partial_t \big]=0$, we eventually obtain
\begin{equation*} %\label{eq_funct_h-j}
     \big[X_k, X_0 \big] u(x,t) = \big[X_k, \omega \cdot X + Y \big] u(x,t) - \sum_{j=1}^m \omega_j \big[X_k, X_j
\big] u(x,t) = 0,
\end{equation*}
for $k= 1, \dots, m$. This concludes the proof of \eqref{eq_comm-jk0}. A plain application of the
Baker-Campbell-Hausdorff formula gives the result for all higher-order commutator.

The result for any nonnegative solution then clearly follows from the representation formula \eqref{int_rep}.
\end{proof}

\subsection{Liftable operators} \label{s-ex6}

Our approach applies also to operators that are not invariant with respect to any Lie group structure, but
that can be \emph{lifted} to a suitable operator $\widetilde \L$ that satisfies assumption (H1). Consider, for
instance, the following {Grushin-type evolution operator}
\begin{equation} \label{ex-grushin}
    \L u = \p_{t} u - \p_x^2 u - x^2 \p_y^2 u \qquad (x,y,t) \in \R^3.
\end{equation}
Since it is degenerate at $\big\{ x = 0 \big\}$ and nondegenerate in the set $\big\{ x \ne 0 \big\}$, a change of
variables that preserves the operator cannot exist. If we lift the operator by adding a new variable $w$ and introducing
the vector fields $\widetilde X_1 := X_1$ and $\widetilde X_2 := X_2 + \partial_w$, then we get the lifted operator
\begin{equation} \label{ex-grushin-lift}
    \widetilde \L u: = \p_{t} u - \p_x^2 u - (\p_w + x \p_y)^2 u  \qquad (x,y,w,t) \in \R^4,
\end{equation}
that belongs to the class considered in Section~\ref{sec_Parabolic}. The uniqueness result proved for
\eqref{ex-grushin-lift}
directly extends to \eqref{ex-grushin}.

Analogously, the operator
\begin{equation} \label{ex-CMP-NL}
    \L u = \p_{t} u - \p_x^2 u - x^2 \p_y u \qquad (x,y,t) \in \R^3
\end{equation}
studied in \cite{CintiMenozziPolidoro}, is not invariant with respect to any Lie group structure. However, it can be
lifted to the operator defined in \eqref{ex-CMP} and, also in this case, the uniqueness result for \eqref{ex-CMP} extends to
\eqref{ex-CMP-NL}.
We note that \eqref{ex-CMP-NL} appears in stochastic theory (see the references in \cite{CintiMenozziPolidoro} for a
bibliography on this subject).

Clearly, the lifting method can be applied to a wide class of operators.

\subsection{Open problems}
In this subsection we list several open problems related to the results of the present paper.
\begin{enumerate}
  \item Our first problem concerns with the strict positivity of nonzero nonnegative solutions of the equation $\L
u=0$ in $\OT$ (cf. Theorem~\ref{th-main}).

  \item We would like to  extend our main results to operators with nontrivial zero-order term, namely to operators of
the form
$$\L_cu := \p_t u - \sum_{j=1}^m X_j^2 u - X_0 u - c(x) u.$$

  \item We would like to weaken the left-invariance condition, as well.

  \item We aim to study property {\it (b)} of Section~\ref{ssec_Liouville} for degenerate operators. More precisely,
we would like to find conditions under which the generalized principal eigenvalue $\lambda_0$ of $\L_0$ is equal $0$.
Moreover, we would like to understand whether $\L_0$ is critical in $\R^N$.

  \item In another direction, we would like to extend the nonnegative Liouville-type theorem in $\R^{N+1}$
(Theorem~\ref{thm_end}) to the case of operators with a nontrivial drift term.

  \item Finally, it is natural to extend our work to the case where $\L$ of the form \eqref{e1} is defined on a
noncompact Lie group, and even to the more general setting of a noncompact manifold $M$ with a cocompact group action
(cf. \cite{LinPinchover94}). We expect that the acting group should be nilpotent.
\end{enumerate}

\begin{center}{\bf Acknowledgments} \end{center}
The authors started to work on the present paper during their visit at BCAM - The Basque Center for Applied Mathematics.
The authors wish to thank Professor Enrique Zuazua for the hospitality. The authors thank Professor Nicola
Garofalo for bringing their attention to the Mumford operator. They also wish to thank Alano Ancona for a useful
discussion concerning the Martin representation theorem on Bauer harmonic spaces, and Caterina Manzini for her help in
the study of the Martin functions for Kolmogorov equations.

A.E.~K. and S.~P. are grateful to the Department of Mathematics at the Technion for the hospitality during their visits.
Y.~P. acknowledges the support of the Israel Science Foundation (grants No.~963/11) founded by the Israel Academy of
Sciences and Humanities. The authors also thank Gruppo Nazionale per l'Analisi Matematica, la Probabilit\`a e le
loro Applicazioni (GNAMPA) of the Istituto Nazionale di Alta Matematica (INdAM) for supporting the visit of Y.~P. to Modena
and Reggio Emilia University on February 2014.


\begin{thebibliography}{10}

\bibitem{AgrachevSachkov}
{\sc A.~A. Agrachev and Y.~L. Sachkov}, {\em Control theory from the geometric
  viewpoint}, vol.~87 of Encyclopaedia of Mathematical Sciences, Control Theory and Optimization II,
  Springer-Verlag, Berlin, 2004.

\bibitem{Alexopoulos}
{\sc G.~K. Alexopoulos}, {\em {Sub-Laplacians with drift on Lie groups of
  polynomial volume growth.}}, Mem. Am. Math. Soc. {\bf 739} (2002), ~101 p.

\bibitem{BarucciPolidoroVespri}
{\sc E.~Barucci, S.~Polidoro, and V.~Vespri}, {\em Some results on partial
  differential equations and {A}sian options}, Math. Models Methods Appl. Sci.
  {\bf 11} (2001), 475--497.

  \bibitem{Becker}
{\sc R.~Becker}, {\em Convex cones in analysis}, vol.~67 of Travaux en Cours
  [Works in Progress], Hermann \'Editeurs des Sciences et des Arts, Paris,
  2006.
\newblock With a postface by G. Choquet, Translation of the 1999 French
  version.

\bibitem{BonfiglioliLanconelli-2012}
{\sc A.~Bonfiglioli and E.~Lanconelli}, \emph{Lie groups related to {H}\"ormander
  operators and {K}olmogorov-{F}okker-{P}lanck equations}, Commun. Pure Appl.
  Anal. \textbf{11}~(2012), 1587--1614.


\bibitem{LibroBLU}
{\sc A.~Bonfiglioli, E.~Lanconelli, and F.~Uguzzoni}, {\em Stratified {L}ie
  groups and potential theory for their sub-{L}aplacians}, Springer Monographs
  in Mathematics, Springer, Berlin, 2007.

\bibitem{Bony}
{\sc J.~M. Bony}, {\em Principe du maximum, in\'egalit\'e de {H}arnack et
  unicit\'e du probl\`eme de {C}auchy pour les op\'erateurs elliptiques
  d\'eg\'en\'er\'es}, Ann. Inst. Fourier {\bf 19} (1969), 277--304.

\bibitem{Bramantibook}
{\sc M.~Bramanti}, {\em An invitation to hypoelliptic operators and
  H\"ormander's vector fields}, Springer Briefs in Mathematics, Springer,
  Berlin, 2014.

\bibitem{Calin}
{\sc O.~Calin, D.-C. Chang, K.~Furutani, and C.~Iwasaki}, {\em Heat kernels for
  elliptic and sub-elliptic operators, methods and techniques}, Applied and Numerical Harmonic
  Analysis, Birkh\"auser/Springer, New York, 2011.

\bibitem{Cercignani}
{\sc C.~Cercignani}, {\em The {B}oltzmann equation and its applications},
  Springer-Verlag, New York, 1988.

  \bibitem{Choquet}
{\sc G.~Choquet}, {\em Lectures on analysis. {V}ol. {I}--{III}}, Edited by J. Marsden, T. Lance
  and S. Gelbart, W. A. Benjamin, Inc., New York-Amsterdam, 1969.

\bibitem{Cinti09}
{\sc C.~Cinti}, {\em Partial differential equations---uniqueness in the
  {C}auchy problem for a class of hypoelliptic ultraparabolic operators}, Atti
  Accad. Naz. Lincei Cl. Sci. Fis. Mat. Natur. Rend. Lincei (9) Mat. Appl. {\bf 20}
  (2009), 145--158.

\bibitem{CintiMenozziPolidoro}
{\sc C.~Cinti, S.~Menozzi, and S.~Polidoro}, {\em Two-sided bounds for degenerate processes with densities supported
in subsets of $\mathbb{R}^n$}, Potential Anal. {\bf 42} (2015), 39--98.

\bibitem{CranstonOreyRosler}
{\sc M.~Cranston, S.~Orey, and U.~R{\"o}sler}, {\em The {M}artin boundary of
two-dimensional {O}rnstein-{U}hlenbeck processes}, in Probability, statistics
and analysis, vol.~79 of London Math. Soc. Lecture Note Ser., Cambridge Univ.
Press, Cambridge, 1983, pp.~63--78.

\bibitem{DGP2007} {\sc D.~Danielli, N.~Garofalo and A.~Petrosyan}, \emph{The sub-elliptic obstacle problem:
{$C^{1,\alpha}$} regularity of the free boundary in {C}arnot groups of step two}, Adv. Math.
\textbf{211} (2007), 485--516.

\bibitem{Donnelly}
{\sc H.~Donnelly}, {\em Uniqueness of positive solutions of the heat equation},
  Proc. Amer. Math. Soc. {\bf 99} (1987), 353--356.

\bibitem{Doob} {\sc J.~L.~Doob}, {\em Classical potential theory and its probabilistic counterpart}, Reprint of the
1984 edition, Classics in Mathematics, Springer-Verlag, Berlin, 2001.

\bibitem{GarofaloLanconelli}
{\sc N.~Garofalo and E.~Lanconelli}, {\em Level sets of the fundamental
  solution and {H}arnack inequality for degenerate equations of {K}olmogorov
  type}, Trans. Amer. Math. Soc. {\bf 321} (1990), 775--792.

\bibitem{Hadamard}
{\sc J.~Hadamard}, {\em Extension \`a l'\'equation de la chaleur d'un
  th\'eor\`eme de {A}. {H}arnack}, Rend. Circ. Mat. Palermo (2) {\bf 3} (1954),
  337--346.

\bibitem{Hormander}
{\sc L.~H{\"o}rmander}, {\em Hypoelliptic second order differential equations},
  Acta Math. {\bf 119} (1967), 147--171.

\bibitem{Kim} {\sc B.~Kim}, {\em Poincar\'{e} inequality and the uniqueness of solutions for the heat equation associated with subelliptic diffusion operators}, (preprint, 2013), arXiv:1305.0508

\bibitem{KogojLanconelli2}
{\sc A.~E. Kogoj and E.~Lanconelli}, {\em An invariant {H}arnack inequality for
  a class of hypoelliptic ultraparabolic equations}, Mediterr. J. Math. {\bf 1}
  (2004), 51--80.

\bibitem{KogojLanconelli05}
\leavevmode\vrule height 2pt depth -1.6pt width 23pt, {\em One-side {L}iouville
  theorems for a class of hypoelliptic ultraparabolic equations}, in Geometric
  analysis of {PDE} and several complex variables, vol.~368 of Contemp. Math.,
  Amer. Math. Soc., Providence, RI, 2005, pp.~305--312.

\bibitem{KogojLanconelli06}
\leavevmode\vrule height 2pt depth -1.6pt width 23pt, {\em Liouville theorems
  in halfspaces for parabolic hypoelliptic equations}, Ric. Mat. {\bf 55} (2006),
267--282.

\bibitem{KogojLanconelli4}
\leavevmode\vrule height 2pt depth -1.6pt width 23pt, {\em Link of groups and
  homogeneous {H}\"ormander operators}, Proc. Amer. Math. Soc. {\bf 135} (2007),
  2019--2030.

\bibitem{KogojLanconelli07}
\leavevmode\vrule height 2pt depth -1.6pt width 23pt, {\em Liouville theorems
  for a class of linear second-order operators with nonnegative characteristic
  form}, Bound. Value Probl.,  (2007), ~Art. ID 48232, pp.~16.

\bibitem{KogojLanconelli09}
\leavevmode\vrule height 2pt depth -1.6pt width 23pt, {\em Liouville theorem
  for {$X$}-elliptic operators}, Nonlinear Anal. {\bf 70} (2009), 2974--2985.

\bibitem{KogojPolidoro15}
{\sc A.~E. Kogoj and S.~Polidoro}, {\em {H}arnack inequality for
hypoelliptic second order partial differential operators}, (preprint, 2015), arXiv:1509.05245

\bibitem{KoranyiTaylor85}
{\sc A.~Kor{\'a}nyi and J.~C. Taylor}, {\em Minimal solutions of the heat
  equation and uniqueness of the positive {C}auchy problem on homogeneous
  spaces}, Proc. Amer. Math. Soc. {\bf 94} (1985), 273--278.

\bibitem{Krener}
{\sc A.~J. Krener}, {\em A generalization of {C}how's theorem and the bang-bang
  theorem to non-linear control problems}, SIAM J. Control {\bf 12} (1974),
43--52.

\bibitem{LanconelliPascucci}
{\sc E.~Lanconelli and A.~Pascucci}, {\em On the fundamental solution for
  hypoelliptic second order partial differential equations with nonnegative
  characteristic form}, Ricerche Mat. {\bf 48} (1999), 81--106.

\bibitem{LanconelliPolidoro94}
{\sc E.~Lanconelli and S.~Polidoro}, {\em On a class of hypoelliptic evolution
  operators}, Partial differential equations, II (Turin, 1993), Rend. Sem. Mat. Univ. Politec. Torino {\bf 52} (1994), 29--63.


\bibitem{LinPinchover94}
{\sc V.~Y. Lin and Y.~Pinchover}, {\em Manifolds with group actions and
  elliptic operators}, Mem. Amer. Math. Soc., {\bf 112} (1994), pp.~vi+78.

\bibitem{LorenziBertoldi}
{\sc L.~Lorenzi and M.~Bertoldi}, {\em Analytical methods for {M}arkov
  semigroups}, vol.~283 of Pure and Applied Mathematics (Boca Raton), Chapman
  \& Hall/CRC, Boca Raton, FL, 2007.

\bibitem{Maeda91} {\sc F.-Y.~Maeda},
{\em Martin boundary of a harmonic space with adjoint structure and its applications},
{Hiroshima Math. J.} {\bf 21} (1991), 163--186.

\bibitem{Montgomery} {\sc R.~Montgomery},
{\em A tour of subriemannian geometries, their geodesics and applications}, Mathematical Surveys and Monographs, 91,
American Mathematical Society, Providence, RI, 2002.

\bibitem{Mumford}
{\sc D.~Mumford}, {\em Elastica and computer vision, in: Algebraic geometry and its applications} (eds.
Bajaj, Chandrajit) Springer-Verlag, New-York, (1994), pp. 491--506.

\bibitem{Murata93}
{\sc M.~Murata}, {\em Uniform restricted parabolic {H}arnack inequality,
  separation principle, and ultracontractivity for parabolic equations}, in
  Functional analysis and related topics, 1991 ({K}yoto), vol.~1540 of Lecture
  Notes in Math., Springer, Berlin, 1993, pp.~277--288.

\bibitem{Murata95}
\leavevmode\vrule height 2pt depth -1.6pt width 23pt, {\em Uniqueness and
  nonuniqueness of the positive {C}auchy problem for the heat equation on
  {R}iemannian manifolds}, Proc. Amer. Math. Soc. {\bf 123} (1995), 1923--1932.

\bibitem{Pinchover88}
{\sc Y.~Pinchover}, {\em Representation theorems for positive solutions of
  parabolic equations}, Proc. Amer. Math. Soc. {\bf 104} (1988), 507--515.

\bibitem{Pinchover1996}
\leavevmode\vrule height 2pt depth -1.6pt width 23pt, {\em On uniqueness and
  nonuniqueness of the positive {C}auchy problem for parabolic equations with
  unbounded coefficients}, Math. Z. {\bf 223} (1996), 569--586.

\bibitem{Pini}
{\sc B.~Pini}, {\em Sulla soluzione generalizzata di {W}iener per il primo
  problema di valori al contorno nel caso parabolico}, Rend. Sem. Mat. Univ.
  Padova {\bf 23} (1954), 422--434.

\bibitem{Polidoro1995}
{\sc S.~Polidoro}, {\em Uniqueness and representation theorems for solutions of {K}ol\-mo\-go\-rov-{F}okker-{P}lanck
equations},  Rendiconti di Matematica, Serie VII, {\bf 15}, (1995), 535--560.

\bibitem{Risken}
{\sc H.~Risken}, {\em The {F}okker-{P}lanck equation: Methods of solution and
  applications}, Springer-Verlag, Berlin, second~ed., 1989.

\bibitem{Widder}
{\sc D.~V.~Widder}, {\em Positive temperatures on an infinite rod}, Trans.
  Amer. Math. Soc.  {\bf 55} (1944), 85--95.

\end{thebibliography}
\end{document}